\documentclass[reqno]{amsart}
\usepackage{amsmath,amssymb,amscd}
\usepackage{geometry}

\usepackage[percent]{overpic}
\usepackage{color,hyperref}

 \topmargin=0pt
 \oddsidemargin=0pt
 \evensidemargin=0pt
 \textwidth=6.5in
 \textheight=9in
 \raggedbottom
 \parskip=0.10cm

\def\bn{\mathbf{n}}
\def\bp{\mathbf{p}}
\def\bq{\mathbf{q}}
\def\bt{\mathbf{t}}
\def\bu{\mathbf{u}}
\def\bv{\mathbf{v}}
\def\bx{\mathbf{x}}
\def\by{\mathbf{y}}

\def\bP{\mathbb{P}}
\def\bR{\mathbb{R}}
\def\bT{\mathbb{T}}
\def\bZ{\mathbb{Z}}

\def\cA{\mathcal{A}}
\def\cB{\mathcal{B}}

\def\cD{\mathcal{D}}
\def\cE{\mathcal{E}}

\def\cK{\mathcal{K}}

\def\cO{\mathcal{O}}

\def\cR{\mathcal{R}}
\def\cS{\mathcal{S}}

\def\cU{\mathcal{U}}
\def\cV{\mathcal{V}}
\def\cW{\mathcal{W}}
\def\cX{\mathcal{X}}

\def\pa{\partial}

\def\ds{\displaystyle}

\newtheorem{them}{Theorem}[section]
\newtheorem{thm}{Theorem}
\newtheorem{cor}{Corollary}
\newtheorem{lem}[them]{Lemma}
\newtheorem{pro}[them]{Proposition}
\newtheorem{coro}[them]{Corollary}

\theoremstyle{definition}
\newtheorem{definition}{Definition}[section]
\newtheorem{remark}{Remark}[section]
\newtheorem*{example}{Example}

\numberwithin{equation}{section}

\begin{document}

\title[Convex billiards on convex spheres]{Convex billiards on convex spheres}

\author[P. Zhang]{Pengfei Zhang}
\address{Department of Mathematics, University of Houston, Houston, TX 77004.}
\email{pzhang@math.uh.edu}

\subjclass[2000]{37D40, 37D50, 37C20, 37E40}

\keywords{convex sphere, convex billiards, generic properties, Kupka--Smale,
hyperbolic periodic point, Poincare's connecting problem,
homoclinic intersection, elliptic periodic point, Moser
stable, Diophantine, prime-end}

\begin{abstract}
In this paper we study the dynamical billiards on a convex 2D sphere.
We investigate some generic properties
of the convex billiards on a general convex sphere.
We prove that $C^\infty$ generically,
every periodic point is either hyperbolic or
elliptic with irrational rotation number.
Moreover, every hyperbolic periodic point admits
some transverse homoclinic intersections.
A new ingredient in our approach is
Herman's result on Diophantine invariant curves  that we use
to prove the nonlinear stability of elliptic periodic points
for a dense subset of convex billiards.
\end{abstract}

\maketitle

\section{Introduction}\label{intro}

The dynamical billiards, as a class of dynamical systems,
were introduced by Birkhoff \cite{Bir17,Bir} in his study of
Lagrangian systems with two degrees of freedom.
A Lagrangian system with two degrees of freedom
is isomorphic with the motion of  a mass particle
moving on a surface rotating uniformly
about a fixed axis and carrying  a fixed conservative  field of
force with it. If the surface is not rotating and the force vanishes,
then the particle moves along geodesics on the surface.
If the surface has boundary,
then the resulting system is a billiard system.

The classical results of dynamical billiards are closely related to geometrical optics,
which has a much longer history.
For example, the discovery of the integrability of
elliptic billiards, according to Sarnak \cite{Sar}, goes back at least to Boscovich in 1757.
Surprisingly, the billiard dynamics is also related to the spectra property of Laplace--Beltrami
operator on manifolds with a boundary.
More precisely, Weyl's law in spectral theory
gives the first order asymptotic distribution
of eigenvalues of the Laplace--Beltrami operator on a bounded domain.
Weyl's conjecture on the second order asymptotic distribution was proved by Ivrii \cite{Ivr}
for any compact manifold with boundary,
under the assumption that the measure of periodic points of  billiard
dynamics on that manifold is zero.

Current study of dynamical billiard systems mainly focuses on the Euclidean case.
Birkhoff studied the dynamical billiards
inside a convex domain on the plane.
Birkhoff also conjectured that ellipses are the only integrable billiards.
A weak version of this conjecture was proved by Bialy \cite{Bia93}.
The dynamical billiards on a bounded domain with convex scatterers were
introduced by Sinai in his study
of Boltzmann Ergodic Hypothesis \cite{Sin70} on ideal gases.
Sinai discovered the dispersing mechanism and proved
that dispersing billiards are hyperbolic and ergodic.
Since then, the mathematical and physical study of chaotic billiards
has developed at a remarkable speed (see \cite{CM}),
particularly after the various defocusing mechanisms
discovered by Bunimovich \cite{Bu,Bu92},
Wojtkowski \cite{Woj}, Markarian \cite{Mar} and Donnay \cite{Don91}.
Very recently, the dynamics of some asymmetric lemon billiards
are proved to be hyperbolic \cite{BZZ}, for which the separation condition
in the defocusing mechanism was strongly violated.
See \cite{Vet,KSS,GSG} for the study of chaotic billiards
on general surfaces.
The study of chaotic billiards also provides the key idea
for the construction of hyperbolic geodesic flows on $S^2$,
see \cite{Don88a,Don88b,BuGe}.

Dynamical billiards on curved surfaces are related to the study of
quantum magnetic confinement
of non-planar 2D electron gases (2DEG) in semiconductors \cite{FLBP},
where the effect of varying the curvature
of the surface corresponds to a change in the potential energy of the system.
The dynamical billiards can be viewed as a mathematical model for
this system, and may be used to investigate the electron transport
properties of the semiconductors.
As mentioned in \cite{GSG}, the advances in semiconductor
fabrication techniques allow to manufacture solid state
(mesoscopic) devices where electrons are confined to curved surfaces.

In this paper we consider the convex billiards
on convex spheres.
Recall that the 2D sphere $S^2$ with a smooth Riemannian metric $g$
is said to be (strictly) {\it convex},
if it has positive Gaussian curvature:
$K_g(x)>0$ for all $x\in S^2$.
Given a tangent vector $\bv\in T_xS^2$,
the geodesic passing through $x$ in the direction of $\bv$
is defined by the exponential map
$\gamma_{\bv}:\bR\to S^2$, $t\mapsto\exp_x(t \bv)$.
For any two points $p,q\in S^2$, let $d(p,q)$ be the length
of the shortest geodesics connecting $p$ and $q$.
Let $\text{Inj}(S^2,g)$ be the injective radius of $(S^2,g)$.
\begin{example}
Let $S^2$ be the unit sphere in $\bR^3$
endowed with the  round metric $g_0$. Then $K_0\equiv 1$,
and every geodesic on $S^2$ moves along a great circle.
Let $p,q\in S^2$ be two points on the sphere, and $\alpha$ be the angle
between the two position vectors $\bp,\bq$.
Then the geodesic distance $d_0(p,q)$ between $p$ and $q$
is given by $d_0(p,q)=\alpha(\bp,\bq)$,
and $\cos\alpha=\langle \bp,\bq\rangle$.
Therefore, $d_0(p,q)=\arccos \langle \bp,\bq\rangle$.
Moreover, $\text{Inj}(S^2,g_0)=\pi$.
The dynamical billiards inside convex subsets
of $(S^2,g_0)$ have been studied
recently in \cite{Bol,Bia13,CP14}.
Regarding the Ivrii conjecture,
it is proved in \cite{BKNZ} that the set of periodic points of period 3
has zero measure for {\it any} billiard on the unit sphere.
\end{example}

\vskip.1in

\begin{definition}
Let $(S^2,g)$ be a convex sphere.
A closed subset $Q\subset S^2$ is said to be (geodesically) {\it convex},
if $Q$ is simply connected, and for any two points $x,y\in Q$,
there is a unique minimizing geodesic contained in $Q$
connecting $x$ and $y$.
A convex domain $Q$ is said to be {\it strictly convex},
if the interior of each minimizing geodesic
is contained in the interior $Q^o$ of $Q$.
\end{definition}
Let $Q\subset S^2$ be a convex domain,
$s$ be the arc-length parameter of $\Gamma=\pa Q$,
and $\kappa(s)$ be the geodesic
curvature of $\Gamma$ at $\Gamma(s)$.
Note that $\kappa(s)\ge 0$ for all $s$.
If $Q$ is strictly convex, then $\kappa(s)>0$ for all $s$
(except on a closed set without interior).
By definition, there are no conjugate points inside a convex domain $Q$.
In the following we require that there are no conjugate points
on the closed domain $Q$.
A sufficient condition for nonexistence of conjugate point is that
$\text{diam}(Q)<\text{Inj}(S^2,g)$.\\

The dynamical billiard on $Q$ can be defined analogously
to the planar case. That is, a particle moves along geodesics inside $Q$,
and reflects elastically upon hitting the boundary $\partial Q$.
Suppose the previous reflection happens at $\Gamma(s)$.
Let $\theta$ be the angle measured from the (positive) tangent
direction $\dot\Gamma(s)$ to the post-reflection velocity of that particle.
Then the {\it billiard map} $F$ sends $(s,\theta)$ to the next reflection
$(s_1,\theta_1)$ with $\partial Q$.
The {\it phase space} of the billiard map $F$ on $Q$ is given
by $M=\Gamma\times (0,\pi)$.
Note that the 2-form $\omega=\sin\theta\; ds\wedge d\theta$
is a symplectic form on $M$.
Let $\mu$ be the smooth probability measure
on $M$ with density $d\mu=\frac{1}{2|\partial Q|}\sin\theta\; ds\; d\theta$.

\begin{thm}\label{twist}
Let $(S^2,g)$ be a convex sphere and $Q\subset S^2$ be a
strictly convex domain with $C^r$ smooth boundary $\Gamma=\pa Q$.
Then billiard map $F: M\to M$ is a symplectic  twist map.
In particular, $F$ preserves the measure $\mu$.
\end{thm}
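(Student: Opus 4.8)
The plan is to exhibit $F$ as an exact symplectic twist map by producing a generating function, namely the geodesic length between consecutive impact points, and then to read off both symplecticity and the twist from the derivatives of that function. First I would set $L(s,s_1)=d(\Gamma(s),\Gamma(s_1))$, the length of the minimizing geodesic joining two reflection points. Because we assume there are no conjugate points on $Q$, any two points of $Q$ are joined by a unique minimizing geodesic lying in $Q$, depending smoothly (of class $C^{r-1}$) on its endpoints; together with the strict convexity of $\Gamma$, which guarantees that a geodesic launched into $Q^o$ returns to $\Gamma$ transversally at a well-defined first point, this shows that $F$ is well defined and that $L$ is a smooth function of pairs $(s,s_1)$ of distinct boundary parameters.

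Next I would apply the first variation of arclength to the geodesic from $\Gamma(s)$ to $\Gamma(s_1)$, taking the variation fields at the two endpoints to be $\dot\Gamma(s)$ and $\dot\Gamma(s_1)$, and invoking the reflection law that equates the angles of incidence and reflection. This yields the standard generating relations
\[
\frac{\partial L}{\partial s}=-\cos\theta,\qquad \frac{\partial L}{\partial s_1}=\cos\theta_1,
\]
up to the orientation conventions fixing the signs. Writing $dL=-\cos\theta\,ds+\cos\theta_1\,ds_1$ and using $d\circ d=0$ gives
\[
\sin\theta\,ds\wedge d\theta=\sin\theta_1\,ds_1\wedge d\theta_1,
\]
which is precisely $F^{*}\omega=\omega$. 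Hence $F$ preserves the symplectic form $\omega=\sin\theta\,ds\wedge d\theta$, and in particular it preserves the associated area form and therefore the probability measure $\mu$.

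It remains to verify the monotone twist condition, i.e.\ that $\partial s_1/\partial\theta$ has a constant nonzero sign for fixed $s$; equivalently that the mixed partial $\partial^2 L/\partial s\,\partial s_1$ never vanishes. Differentiating $\partial_s L=-\cos\theta$ in $\theta$ gives $\partial s_1/\partial\theta=\sin\theta\big/\big(\partial^2 L/\partial s\,\partial s_1\big)$, so everything reduces to the sign of the mixed partial. I would compute it with the second variation formula: the variation field along $\gamma$ obtained by moving $\Gamma(s)$ while fixing $\Gamma(s_1)$ is a Jacobi field vanishing at the far endpoint, and a short computation expresses
\[
\frac{\partial^2 L}{\partial s\,\partial s_1}=-\frac{\sin\theta\,\sin\theta_1}{y(\ell)},
\]
where $\ell=L(s,s_1)$ and $y$ solves the scalar Jacobi equation $y''+K_g\,y=0$ along $\gamma$ with $y(0)=0$ and $y'(0)=1$; in the Euclidean case $K_g\equiv 0$, so $y(\ell)=\ell$ and one recovers the classical value $-\sin\theta\,\sin\theta_1/\ell$.

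The hard part is controlling this last sign, and this is exactly where the no-conjugate-point hypothesis becomes essential: it guarantees $y(t)>0$ for all $t\in(0,\ell]$, hence $y(\ell)>0$, and combined with $\sin\theta,\sin\theta_1>0$ on $M$ it forces $\partial^2 L/\partial s\,\partial s_1<0$ throughout, which is the twist. Equivalently, and more geometrically, the absence of conjugate points means the geodesics issuing from $\Gamma(s)$ never refocus and so stay cyclically ordered, and the strict convexity $\kappa>0$ of $\Gamma$ then makes the first-return parameter $s_1$ a strictly monotone function of the launch angle $\theta$. A secondary technical point to settle is the behavior of grazing trajectories as $\theta\to 0,\pi$ together with the smoothness of $L$ up to coincident endpoints, which is again where the positivity of the geodesic curvature of the strictly convex boundary is used.
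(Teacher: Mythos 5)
Your derivation of symplecticity is essentially the paper's own: both proofs obtain the generating relations from the first variation of arclength (the paper works with $S=-d$, so $\partial_1 S=\cos\theta_1$, $\partial_2 S=-\cos\theta_2$, matching your $\partial_s L=-\cos\theta$, $\partial_{s_1}L=\cos\theta_1$ after the sign flip $L=-S$), write the total differential, and apply $d\circ d=0$ to conclude $F^*\omega=\omega$ and hence invariance of $\mu$. Where you genuinely diverge is the twist. The paper disposes of it qualitatively in one sentence: the geodesic pencil $\gamma_\theta$ issuing from $\Gamma(s)$ hits $\Gamma$ at a point $s_1(\theta)$ that is monotone because $Q$ is strictly convex. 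You instead make the twist quantitative, via $\partial s_1/\partial\theta=\sin\theta\big/\big(\partial^2 L/\partial s\,\partial s_1\big)$ together with the Jacobi-field formula expressing the mixed partial through the solution $y$ of $y''+K_g y=0$, $y(0)=0$, $y'(0)=1$. This is a real gain: it isolates exactly where the standing hypothesis of no conjugate points on the closed domain $Q$ enters ($y(\ell)>0$), a fact the paper's one-line monotonicity claim uses silently --- strict convexity of $\Gamma$ alone does not prevent refocusing of the pencil inside a positively curved region. Your version also yields an explicit expression for the twist, which the paper never needs but which is the form used elsewhere in the billiard literature.

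One slip in conventions: with $L=+d$ and your sign choices, the mixed partial is $\partial^2 L/\partial s\,\partial s_1=+\sin\theta\,\sin\theta_1/y(\ell)$, not the negative of it; the value $-\sin\theta\,\sin\theta_1/\ell$ you call ``classical'' belongs to the generating function $-L$ (the paper's $S$). Indeed, a direct planar computation gives $\partial^2 d/\partial s\,\partial s_1=\sin\theta\sin\theta_1/\ell$, whence $\partial s_1/\partial\theta=\ell/\sin\theta_1>0$, consistent with the paper's ``monotonically increasing''; your sign would force $s_1$ decreasing in $\theta$, contradicting $s_1\to s^+$ as $\theta\to 0$ for the standard orientation. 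Since the twist condition only requires a constant nonzero sign, and your argument correctly reduces that to $y(\ell)\neq 0$, the conclusion is unaffected --- but the formula should be corrected. Finally, the ``secondary technical point'' about grazing trajectories can simply be dropped: the phase space is the open annulus $M=\Gamma\times(0,\pi)$, and strict convexity keeps $s_1\neq s$ there, so neither the limits $\theta\to 0,\pi$ nor the behavior of $L$ at coincident endpoints is needed for the theorem as stated.
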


It is well known that a twist map has periodic orbits of Birkhoff type $(m,n)$
for all coprime pairs $(m,n)$ \cite{Bir,Ban}.
It may (most likely will) have some non-Birkhoff periodic orbits\footnote{
Take a planar elliptic billiard for example. The periodic orbits with elliptic
caustics are Birkhoff, while the periodic orbits with hyperbolic
caustics are non-Birkhoff.}.
We study some generic properties of general periodic points of dynamical billiards
on a strictly convex domain $Q$ on $(S^2,g)$.
To this end, we identify the boundary $\Gamma=\pa Q$ with the corresponding
embedding function $f:\bT\to S^2$.
Let $r\ge 2$ ($r$ could be $\infty$),
$\Upsilon^r\!(S^2,g)$ be the set of $C^r$ smooth embeddings
$\Gamma\subset S^2$ such that the enclosed domains $Q=Q(\Gamma)$
are strictly convex. Then $\Upsilon^r\!(S^2,g)$ inherits a $C^r$ topology
from $C^r\!(\bT,S^2)$.
\begin{thm}\label{KS}
There is a residual subset $\cR^r\subset \Upsilon^r\!(S^2,g)$,
such that for each $\Gamma\in \cR^r$,
the billiard map on $\Gamma$ satisfies
\begin{enumerate}
\item each periodic point is either hyperbolic,
or elliptic with irrational rotation number;

\item any two branches of invariant manifolds of hyperbolic periodic points
either do not intersect, or they have some transverse intersections.
\end{enumerate}
\end{thm}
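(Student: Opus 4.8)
The plan is to realize $\cR^r$ as a countable intersection of open and dense subsets of $\Upsilon^r\!(S^2,g)$, with one family of conditions for each item, and to produce each open-dense set by an explicit local deformation of the boundary curve $\Gamma$. The essential point, and the place where this billiard setting departs from the abstract Kupka--Smale theorem, is that we are not free to perturb the symplectic map $F$ directly: every admissible perturbation must come from a deformation of $\Gamma$ inside $\Upsilon^r\!(S^2,g)$. So the heart of the argument will be a perturbation lemma asserting that boundary deformations supply enough freedom to move the relevant finite jets of $F$ along a given periodic orbit.

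For item (1) I would first fix a period $n$ and make the period-$n$ points nondegenerate. Periodic orbits of combinatorial type $(m,n)$ are exactly the critical points of the length (action) functional $\sum_i L(s_i,s_{i+1})$ on the space of inscribed $n$-gons, where $L(s,s_1)=d(\Gamma(s),\Gamma(s_1))$ is the generating function of the twist map $F$ from Theorem~\ref{twist}. A standard transversality argument, realized by deforming $\Gamma$ near the vertices, makes this action a Morse function, so that $\det(DF^n-I)=2-\mathrm{tr}(DF^n)\neq 0$ at every period-$n$ point; these points are then finite in number and persist, varying $C^{r-1}$-continuously, under further small perturbations, and this already excludes the eigenvalue $1$. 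The second step pushes the remaining traces off the countable resonant set. Here I use the key lemma: at a reflection vertex $\Gamma(s_i)$ one can deform $\Gamma$ keeping its $1$-jet (the point and the tangent line, hence the whole orbit and all reflection angles) fixed while changing only the geodesic curvature $\kappa(s_i)$. In the factorization of $DF^n$ into alternating free-flight (Jacobi) blocks and reflection blocks, $\kappa(s_i)$ enters linearly in exactly one reflection factor, so $\frac{d}{d\epsilon}\,\mathrm{tr}(DF^n)$ is an explicit expression that is generically nonzero, and $\mathrm{tr}(DF^n)$ may be moved off the finite set $\{2\cos(2\pi p/q):0\le p\le q\}$ for each fixed $q$. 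Since the finitely many period-$n$ orbits have distinct vertices, I can treat them successively by deformations with disjoint supports, which act independently on the traces. Intersecting the resulting open-dense sets over all $n$ and all $q$ yields a residual set on which every periodic point is nondegenerate with trace avoiding every value $2\cos(2\pi p/q)$; such a point is hyperbolic when $|\mathrm{tr}|>2$ and elliptic with irrational rotation number when $|\mathrm{tr}|<2$, which is item (1).

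For item (2), after restricting to the residual set from (1) I would fix two hyperbolic periodic points $x,y$ of period at most $n$ (finitely many) together with a pair of invariant branches, and show that the set of $\Gamma$ for which these branches are either disjoint or meet transversally somewhere is open and dense; intersecting over all $x,y,n$ again gives a residual set. Openness is the usual stability of transverse intersections of compact pieces of invariant manifolds, which depend $C^{r-1}$-continuously on $\Gamma$ once the hyperbolic orbits are nondegenerate. For density I would remove tangencies: if a branch of $W^u(x)$ is tangent to a branch of $W^s(y)$ at a non-transverse point, a boundary deformation localized along the orbit through that point tilts one manifold relative to the other to first order, unfolding the tangency into a transverse crossing while leaving the earlier finitely many conditions intact. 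Existence of the intersections themselves, and hence the transverse homoclinic points advertised in the abstract, I would obtain separately from the twist structure and Birkhoff's theory of $(m,n)$ orbits; this is not needed for the dichotomy in (2).

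The main obstacle throughout is the effectiveness of these constrained perturbations: one must verify that curvature variations at the vertices genuinely move $\mathrm{tr}(DF^n)$, and that localized boundary deformations genuinely tilt the invariant manifolds, despite the rigidity of working inside $\Upsilon^r\!(S^2,g)$. This forces explicit control of $DF$ along an orbit. In the present spherical setting the free-flight blocks are no longer the Euclidean shears $\bigl(\begin{smallmatrix}1&\ell\\0&1\end{smallmatrix}\bigr)$ but are governed by the Jacobi equation $J''+K_g J=0$ with the Gaussian curvature $K_g$ of $(S^2,g)$; I expect the computation confirming that $\frac{d}{d\epsilon}\,\mathrm{tr}(DF^n)\neq 0$, adapted from the planar case to this variable-curvature Jacobi flow, to be the technically delicate core of the proof.
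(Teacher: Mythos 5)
Your architecture---realizing $\cR^r$ as a countable intersection of open--dense sets produced by boundary deformations, first making period-$n$ points nondegenerate and then moving traces off the resonant set $\{2\cos(2\pi p/q)\}$, and unfolding tangencies of invariant branches by a localized curvature change---is essentially the paper's (normal perturbations as in Proposition \ref{dege}; Donnay's Lemma \ref{donnay} for part (2)). But three steps you treat as routine are exactly where the paper has to work, and as stated they are genuine gaps. First, ``the finitely many period-$n$ orbits have distinct vertices'' is not automatic: a single orbit can revisit a reflection point within one period (positive defect), and two distinct orbits can share vertices---on the round sphere every boundary point carries Birkhoff orbits of all types $(m,n)$. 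Without distinctness your perturbation parameters at the vertices are not independent, since a curvature change at a multiply-visited point enters several reflection blocks of $DF^n$ at once. The paper makes zero defect and disjointness of reflection points generic in Proposition \ref{zero}, via a multi-jet transversality argument carried out in the appendix, and its inductive scheme over divisors $k\,|\,n$ rests on this. Second, finiteness and persistence of $P_n(\Gamma)$ do not follow from Morse nondegeneracy alone: the phase space $M=\Gamma\times(0,\pi)$ (equivalently the configuration space of inscribed $n$-gons) is noncompact, and nondegenerate period-$n$ points could a priori accumulate at $\theta\to 0,\pi$. The paper excludes this by Lemma \ref{Mn}, a Gauss--Bonnet estimate special to the convex sphere that forces every period-$n$ orbit into the compact core $M_n$; this is what makes $\cU_n$ open and the map $\Gamma\mapsto P_n(\Gamma)$ finite-valued and continuous, which your openness and ``treat orbits successively'' steps silently use.

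Third, your key claim that a curvature variation at one vertex moves $\mathrm{tr}(DF^n)$ ``generically'' fails at precisely the troublesome points. A normal perturbation at $s_0=\pi_1(p)$ gives $\tau_\epsilon(p)=\tau(p)+\epsilon b_p$, where $b_p$ is the upper-right entry of $D_pF^n$; if $b_p=0$ the trace does not move at all, and for a non-hyperbolic point $b_p=0$ forces $\tau(p)=\pm 2$ with $D_pF^n=\pm\begin{bmatrix}1&0\\ c&1\end{bmatrix}$, a configuration that can be \emph{robust} under every curvature perturbation at that vertex (the paper's example: a period-2 orbit along a geodesic joining conjugate points, where the invariant wavefront refocuses at each reflection). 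Since $-2=2\cos\pi$ lies in your resonant set, your second step must handle exactly such points. The paper's Proposition \ref{dege} resolves this with a case analysis you would need to reproduce: for nonsymmetric orbits one perturbs instead at $Fp$, where $b_{Fp}\neq 0$ because the unique invariant wavefront cannot focus at two consecutive reflections (no conjugate points on the closed domain); for symmetric orbits, where the invariant wavefront may focus at the two right-angle endpoints, one shifts $\Gamma$ along the normal direction at that endpoint---a move that preserves the periodic orbit only because the reflection there is perpendicular. Your part (2) follows the paper's route, but note that the same nonfocusing issue recurs there: the curvature change tilts a returning wavefront only if that wavefront does not focus at the perturbed vertex, which Lemma \ref{donnay} assumes and the paper again secures by choosing the perturbation point using the absence of conjugate points in $Q$.
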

Theorem \ref{KS} resembles the classical Kupka--Smale properties for dynamical billiards.
The abstract Kupka--Smale property is proved by applying
Thom Transversality Theorem, which requires the {\it richness}
of local perturbations. However, dynamical billiards are known
for the {\it lack} of local perturbations, since any perturbation of $\Gamma$
results in a (semi)-global perturbation of the billiard map.
See \S \ref{transver} for more details.

Given two hyperbolic periodic points $p$ and $q$, these two points
and their stable and unstable
manifolds may be separated by some KAM-type invariant curves (which are
persistent under small perturbations).
So the existence of heteroclinic intersections
may not be generic.
The following theorem answers
positively the generic existence of homoclinic intersections.
\begin{thm}\label{main}
There is a residual subset $\cR^r\subset \Upsilon^r\!(S^2,g)$,
such that for each $\Gamma\in \cR^r$,
there exist transverse homoclinic intersections
for each hyperbolic periodic point of the billiard map $F$ induced by $\Gamma$.
\end{thm}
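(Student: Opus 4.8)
The plan is to deduce Theorem \ref{main} from the Kupka--Smale type result (Theorem \ref{KS}) by separating the problem into an \emph{existence} statement and a \emph{transversality} statement. I would work inside the residual set $\cR_{KS}\subset\Upsilon^r(S^2,g)$ furnished by Theorem \ref{KS}; there every periodic point is hyperbolic or elliptic with irrational rotation number, and any two branches of invariant manifolds of hyperbolic periodic points either are disjoint or meet transversally. Consequently, for a hyperbolic periodic point $p$ it suffices to show that a branch of $W^u(p)$ genuinely meets $W^s(p)$ at a point other than $p$: such an intersection is automatically a \emph{transverse} homoclinic point by clause (2) of Theorem \ref{KS}. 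The hyperbolic periodic points are organized countably by period, and on $\cR_{KS}$ those of period at most $n$ are isolated, hence locally finite, and admit hyperbolic continuations under $C^r$-small perturbations of $\Gamma$. I would therefore set $\cW_n$ to be the set of $\Gamma$ for which every hyperbolic periodic point of period $\le n$ carries a transverse homoclinic point, put $\cV_n=\mathrm{int}\,\cW_n$, and take $\cR^r=\cR_{KS}\cap\bigcap_n\cV_n$. Since transverse homoclinic points are robust and the relevant hyperbolic points persist, $\cV_n$ is open, and the whole argument reduces to proving that each $\cV_n$ is dense.

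The core is the existence of a homoclinic intersection, i.e.\ Poincar\'e's connecting problem for the billiard map. Replacing $F$ by $F^n$, I may assume $p$ is a hyperbolic fixed point of the $\mu$-preserving map $F^n$. I would work in the closed annulus $\bar M=\Gamma\times[0,\pi]$, to which $F$ extends continuously with invariant boundary circles and on which $\mu$ is a finite invariant measure, so that $\mu$-almost every point is Poincar\'e recurrent. Arguing by contradiction, suppose a branch $\Lambda^u$ of $W^u(p)$ meets $W^s(p)$ only at $p$. Then $\Lambda^u$ is an injectively immersed ray, its closure $\overline{\Lambda^u}$ is a compact invariant continuum, and the complementary invariant open region $U$ with $p$ on its frontier admits $p$ as an accessible point. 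The prime-end compactification of $U$ turns $F$ into a circle homeomorphism with a well-defined prime-end rotation number in the sense of Mather and of Cartwright--Littlewood; the branches $\Lambda^u$ and $\Lambda^s$ determine accessible prime ends fixed by this induced map, which pins down the arithmetic of the rotation number and, via Mather's prime-end theory, forces the bounding stable and unstable branches to cross---contradicting the no-homoclinic assumption. Finiteness of the invariant measure is exactly what prevents $\Lambda^u$ from escaping to a region where this analysis fails: even in the obstructing scenario where $\Lambda^u$ is trapped between two KAM invariant rotational curves, the confining sub-annulus has finite $\mu$-measure and the same prime-end argument runs inside it.

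Finally I turn to density of $\cV_n$, where I expect the main obstacle. The difficulty is precisely the \emph{lack of local perturbations} recorded after Theorem \ref{KS}: any $C^r$ change of $\Gamma$ produces a semiglobal change of $F$, so one cannot manufacture a homoclinic crossing by a hands-on local wiggle. The way around this is that the argument of the previous paragraph requires \emph{no} perturbation at all---it produces a transverse homoclinic point (after invoking Theorem \ref{KS}) for every map lying in $\cR_{KS}$. Thus density of $\cV_n$ follows from density of $\cR_{KS}$ together with the robustness used for openness, provided one can excise the single degenerate possibility left open above: a tangential accumulation of $\Lambda^u$ onto an invariant curve along which the prime-end rotation number degenerates and the crossing argument stalls. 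This phenomenon is non-generic and is removed on a further residual set by a controlled perturbation of the curvature profile of $\Gamma$ that destroys the offending invariant curve while preserving the hyperbolicity and continuation of all hyperbolic periodic points of period $\le n$. Carrying out this necessarily \emph{global} perturbation without disturbing the locally finite bookkeeping of the lower-period hyperbolic points is the delicate technical heart of the density step, and is where the semiglobal nature of billiard perturbations must be handled with care.
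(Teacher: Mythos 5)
Your reduction of transversality to existence via clause (2) of Theorem \ref{KS} is fine, and your overall prime-end strategy is in the spirit of the paper's actual proof (which follows Mather and Franks--Le Calvez). But there is a genuine gap at the exact point you wave away: you assert that the prime-end crossing argument ``requires no perturbation at all'' and works for every $\Gamma\in\cR_{KS}$. It does not. Mather's machinery (and the stronger dichotomy from \cite{XZ} that the paper uses: either $\omega(L)\supset L$ or $\omega(L)$ is a single \emph{hyperbolic} fixed point) requires that every elliptic periodic point be nonlinearly (Moser) stable. On $\cR_{KS}$ you only know the elliptic points have \emph{irrational} rotation number, and irrationality alone does not imply stability. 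Without stability, the fixed prime end $\hat q$ produced on the boundary circle of the prime-end compactification may project to an elliptic point, the dichotomy fails, and the branch $\Lambda^u$ can accumulate on an unstable elliptic orbit --- precisely the ``degenerate possibility'' you defer to the last paragraph. The paper resolves this not by destroying an offending invariant curve (your proposed fix, which is unsubstantiated), but by perturbing the table so that each elliptic periodic point in $P_{2n}(\Gamma)$ has \emph{Diophantine} rotation number and then invoking Herman's theorem (Proposition \ref{herman}) to conclude Moser stability. This only yields a \emph{dense} set $\cD_{2n}\subset\cW_{2n}$, not a residual one; residuality of $\cR^r=\bigcap_n\cX_n$ is then recovered from openness, i.e.\ robustness of transverse intersections plus the continuity and local finiteness of $\Gamma\mapsto P_n(\Gamma)$ on $\cU_n$. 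So perturbation is unavoidable in the density step, and your claim to the contrary is where the proof breaks.

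Two smaller points. First, after passing to $F^n$ you treat $p$ as a fixed point whose branches are fixed, but $F^n$ may \emph{switch} the two branches of $W^s(p)$ (and of $W^u(p)$); the paper works with $f=F^{2n}$ for exactly this reason, and your argument needs the same doubling before the prime-end circle map can have $\hat p$ as an expanding fixed point on each branch. Second, your contradiction step (``the arithmetic of the prime-end rotation number forces the branches to cross'') is vaguer than what actually closes the argument in the paper: once saddle connections are excluded (by membership in $\cW_{2n}$) and all four branches have the same closure (Corollary \ref{mather}), the homoclinic point is produced by a concrete topological argument --- closing a fundamental segment of $L$ and of $K$ through a corner region $S_\epsilon$ to form two simple closed curves whose algebraic intersection number in the annulus $M$ must vanish, forcing a crossing away from $p$. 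You should either supply an argument at that level of precision or cite the mechanism; as written, the central step of your existence proof is asserted rather than proved.
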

The proof of above theorem is based on Mather's characterization \cite{Mat1}
(developed by Franks and Le Calvez in \cite{FrLC})
of the {\it prime-end extension} of
diffeomorphisms on open surfaces.
In his proof, Mather made an assumption that
each {\it elliptic fixed point},  if exists, is Moser stable.
To apply Mather's result, we have to study the elliptic periodic
points first, although the hyperbolic periodic points are the ones we are interested in.
The nonlinear stability is proved by one of Herman's results on Diophantine invariant
curves.
This property guarantees that there is no interaction between the
hyperbolic and elliptic periodic points.

Note that there are plenty of periodic points for twist maps,
and hyperbolic periodic points exist generically.
So the transverse homoclinic intersections in above theorem
do exist generically.
\begin{cor}\label{cor}
There is an open and dense subset $\cU^r\subset \Upsilon^r\!(S^2,g)$,
such that for each $\Gamma\in \cU^r$,
the billiard map on $\Gamma$ has positive topological entropy.
\end{cor}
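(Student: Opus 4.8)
The plan is to derive the corollary from Theorem~\ref{main} by combining the generic existence of transverse homoclinic intersections with the Smale--Birkhoff homoclinic theorem.

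\medskip

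The plan is to derive this corollary from Theorem~\ref{main}, upgrading the ``residual'' conclusion to an ``open and dense'' one and converting homoclinic intersections into positive entropy. First I would recall that for a twist map, Birkhoff's theorem guarantees periodic orbits of type $(m,n)$ for every coprime pair, so periodic points are abundant; moreover the existence of at least one \emph{hyperbolic} periodic point is itself an open and dense condition in $\Upsilon^r\!(S^2,g)$. Indeed, once a hyperbolic periodic point exists it persists under small perturbations (openness), and by Theorem~\ref{KS}(1) on the residual set every periodic point is hyperbolic or irrationally elliptic, so one can perturb an elliptic point to create a hyperbolic one; this yields an open and dense set $\cU_0^r$ on which some hyperbolic periodic orbit $p$ is present.

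\medskip

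The next step is to invoke Theorem~\ref{main}: on the residual set $\cR^r$, each hyperbolic periodic point admits a transverse homoclinic intersection. The existence of a single transverse homoclinic point is an \emph{open} condition, because transversality and hyperbolicity are both stable under $C^r$-small perturbations of $\Gamma$ (the stable and unstable manifolds vary continuously, and transverse intersections cannot be destroyed by small perturbations). Combining this openness with the density supplied by $\cR^r\cap\cU_0^r$, I would define
\[
\cU^r=\{\Gamma\in\Upsilon^r\!(S^2,g): F_\Gamma \text{ has a hyperbolic periodic point with a transverse homoclinic point}\},
\]
and argue that $\cU^r$ is open (by the stability just described) and dense (it contains the residual, hence dense, set $\cR^r$, intersected with the open-dense $\cU_0^r$).

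\medskip

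The final step is dynamical rather than generic: on $\cU^r$, the transverse homoclinic point produces a Smale horseshoe. By the Smale--Birkhoff homoclinic theorem, a transverse homoclinic orbit to a hyperbolic periodic point forces the existence of an invariant hyperbolic set on which an iterate of $F$ is topologically conjugate to a full shift on finitely many symbols; such a shift has positive topological entropy, and hence so does $F$ itself (entropy is unchanged up to the factor $n$ under passing to the $n$-th iterate, and positivity is preserved). This gives $h_{\mathrm{top}}(F_\Gamma)>0$ for every $\Gamma\in\cU^r$.

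\medskip

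The main obstacle I anticipate is the density half of the openness--density argument, specifically justifying that hyperbolic periodic points (and hence the homoclinic picture) can be produced on a dense set. One must rule out the pathological scenario in which \emph{every} periodic point stays elliptic under all nearby perturbations; the residual set of Theorem~\ref{KS} forbids this generically, but turning ``generic'' into ``dense with an explicit perturbation creating hyperbolicity'' requires the same semi-global perturbation analysis flagged in \S\ref{transver}, where the lack of local perturbations for billiards makes the argument delicate. The Smale--Birkhoff step, by contrast, is standard.
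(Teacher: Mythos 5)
Your overall scheme is sound in three of its four steps: openness of the set of tables possessing a transverse homoclinic point does follow from robustness of hyperbolicity and transversality; the Smale--Birkhoff theorem does convert such a point into positive topological entropy (the paper leaves this last step implicit); and \emph{if} the set $\cU_0^r$ of tables with at least one hyperbolic periodic point were open and dense, then $\cR^r\cap\cU_0^r$ would be dense inside your $\cU^r$, as you say. The genuine gap is exactly where you flagged it: the density of $\cU_0^r$. Your proposed mechanism --- ``perturb an elliptic point to create a hyperbolic one'' --- does not work as stated. The only relevant perturbation tool in the paper (Proposition \ref{dege}) changes the trace by $\epsilon\, b_p$, i.e.\ continuously and by an amount comparable to the size of the perturbation; an elliptic point with $|\tau(p)|$ bounded away from $2$ therefore stays elliptic under every $C^r$-small deformation of $\Gamma$, and chaining many small perturbations to push $|\tau|$ across $2$ accumulates into a large deformation of the table, which destroys the approximation needed for density. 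So as written, the density of $\cU_0^r$ is unproved, and with it the density of $\cU^r$.

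The repair --- and the route the paper actually takes --- is Birkhoff's minimizer observation from \S\ref{prelim}: hyperbolic periodic points need not be \emph{created} by perturbation, because they are already present. For $\Gamma$ in the dense set $\cD_2$ of Proposition \ref{dense}, every point of $P_2(\Gamma)$ is nondegenerate; taking the $2$-periodic configuration where the action $W$ attains its minimum, the Hessian $D^2W$ is positive definite, so Proposition \ref{hessian} gives $\mathrm{Tr}(D_xF^2)>2$ and the minimizing orbit is automatically hyperbolic. The paper then does not invoke Theorem \ref{main} at all: since both eigenvalues of this minimizer are positive, each branch of its invariant manifolds is fixed by $F^2$ (no passage to a doubled iterate is needed), and the intersection-number argument of Proposition \ref{homo2n} applies directly on $\cD_2$ to produce a transverse homoclinic point; robustness of transverse intersections then yields the open set $\cU\supset\cD_2$. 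Your black-box use of Theorem \ref{main} becomes legitimate once density of $\cU_0^r$ is repaired by this minimizer argument, but the paper's version is the more economical route, since it pins down a concrete hyperbolic orbit on a dense set from the start instead of passing through the residual set of Theorem \ref{main}.
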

Angenent \cite{Ang} proved that
a twist map with zero topological entropy
must have an invariant circle for each rotation number
in its rotation interval. On the other hand, invariant curves with rational
rotation numbers are fragile and can easily break up.
Therefore, the majority of twist maps should have positive
topological entropy.
So Corollary \ref{cor} can be viewed as a special case of Angenent's result.

Entropy is an important quantity indicating how chaotic a dynamical system is.
The mechanism that a transverse homoclinic intersection generates chaos
was first realized by Poincar\'e when he came across certain
nonconvergent trigonometric series during his
study of the $n$-body problem \cite{Poin}.
This mechanism was developed later by Birkhoff for the existence of
infinitely many periodic points,
and by Smale for the formulation of hyperbolic sets (horseshoe).
Poincar\'e conjectured that
for a generic $f\in\mathrm{Diff}^r_\mu(M)$, and
for every hyperbolic periodic point $p$ of $f$,
\begin{enumerate}
\item[(P1)] $W^s(p)\cap W^u(p)\backslash \{p\}\neq\emptyset$ (weaker version);

\item[(P2)] $W^s(p)\cap W^u(p)$ is dense in $W^s(p)\cup W^u(p)$.
\end{enumerate}
This is the so called Poincar\'e's connecting problem\footnote{Poincar\'e also raised
the closing problem about the denseness of periodic points, see \cite{Pug,PuRo}.}.
In the case $r=1$,  (P1) was proved by Takens in \cite{Tak}; (P2)  was proved
in \cite{Tak} on surfaces, and by Xia \cite{Xia96} in full generality.
For  $r\ge 2$, most results about this connecting problem are on
surfaces. Pixton proved in \cite{Pix} the property (P1) for planar surfaces,
by extending Robinson's result \cite{Rob73} on fixed points. For $M=\bT^2$,
(P1) was proved by Oliveira \cite{Oli1}. For general surfaces, (P1) was
proved by Oliveira \cite{Oli2} for those with irreducible homological
actions; and by Xia in \cite{Xia} for Hamiltonian diffeomorphisms.
The proof of (P1) is still not complete for general surfaces,
and there is almost no result on higher dimensions. The
property (P2) is completely open even on surfaces. For planar convex
billiards, (P1) was proved in \cite{XZ}.

Finally we make a few comments on the positive Gaussian curvature
assumption of the Riemannian metric $g$ on $S^2$.
Suppose the curvature can be negative somewhere on the sphere.
For example, one can put a small light bulb on the table $Q$ as in \cite[Fig.~2]{Don06}.
Then the {\it neck} of the light bulb will be a hyperbolic closed geodesic,
and some geodesic on its unstable manifold will hit the boundary $\Gamma$ of $Q$.
Reversing the time, we get a  billiard trajectory starting on $\Gamma$
that will not collide with $\Gamma$ in  the future.
In other words, the billiard map $F$ is not defined on the whole phase space
and is certainly not continuous. It seems that our method in this paper does
not work (at least not directly).

\section{Preliminaries}\label{prelim}

Let $(S^2,g)$ be a convex sphere,
and $Q\subset S^2$ be a strictly
convex domain with $C^r$ smooth boundary $\Gamma=\pa Q$.
Let $M\subset T_{\Gamma}S^2$ be the set of unit tangent vectors $x=(p,\bv)$
based at points $p\in\Gamma$ that point to the interior of $Q$.
Given a point $x\in M$, let $\gamma_x(t)=\exp_p(t\bv)$ be the geodesic
on $Q$ with initial condition $(\gamma(0),\dot\gamma(0))=(p,\bv)=x$.
Let $t_1$ be the next hitting time of $\gamma(t)$ with $\Gamma$,
$p_1=\gamma(t_1)\in\Gamma$,
and $x_1$ be the reflection of  $\dot \gamma(t_1)$ with respect to
the tangent line $T_{p_1}\Gamma\subset T_{p_1}S^2$.
Then the billiard map $F$ is defined as $M\to M$, $x\mapsto x_1$.
It is convenient to introduce a coordinate system on $M$. That is,
given $x=(p,\bv)\in M$,
let $s=s(p)$ be  the arc-length parameter of $\Gamma$,
$\theta=\theta(\bv)$ be the angle
of $\bv$ measured from the tangent direction $\dot\Gamma(s)$.
In the following we will represent $M$ via
this coordinate system $\{(s,\theta):s\in\Gamma,0<\theta<\pi\}$,
and rewrite the billiard map $F$ as $x=(s,\theta)\mapsto x_1=(s_1,\theta_1)$.

\subsection{Generating function of billiard map}

The dynamical billiard  has an alternative definition using the generating function.
More precisely, let $s\mapsto\Gamma(s)$ be the arc-length parameter.
We will write $s\in \Gamma$ by
identifying $s$ with $\Gamma(s)$
if there is no confusion. For example, we set
$d_\Gamma(s_1,s_2)=d(\Gamma(s_1),\Gamma(s_2))$.
Let $S(s_1,s_2)=-d_\Gamma(s_1,s_2)$,
and $\partial_i S$ be the partial derivative of $S$ with respect to $s_i$,
$i=1,2$.
We extend the generating function to an arbitrary finite segment
$(s_m,\dots,s_n)$ with $s_k\in\Gamma$, $k=m,m+1,\dots, n$,
and define the {\it action functional}
$\ds W(s_m,\dots,s_n)=\sum_{k=m}^{n-1} S(s_k,s_{k+1})$
along the segment $(s_m,\dots,s_n)$.
Such a segment is said to be an orbit segment, if
$\ds \partial_{s_k}W=\partial_2 S(s_{k-1},s_k)
+\partial_1 S(s_k,s_{k+1})=0$ for each  $k=m,\dots, n-1$.

\begin{proof}[Proof of Theorem \ref{twist}]
Given two points $s_1$ and $s_2$, let $\gamma_1(t)$ be the geodesic
from $\gamma_1(0)=\Gamma(s_1)$ to $\gamma_1(d)=\Gamma(s_2)$,
where $d=d_\Gamma(s_1,s_2)$.
Let $\theta_1$ be the angle from $\dot \Gamma(s_1)$ to $\dot\gamma_1(0)$,
and $\theta_2$ be the angle from $\dot \Gamma(s_2)$ to $\dot\gamma_1(d)$.
At $\Gamma(s_2)$, $\gamma_1$ experiences an elastic reflection,
and the new geodesic, say $\gamma_2$,
starts from $\gamma_2(0)=\Gamma(s_2)$,
such that the angle from $\dot \Gamma(s_2)$ to $\dot\gamma_2(0)$
equals $\theta_2$.
One can check that
\begin{equation}\label{generating}
\pa_1 S(s_1,s_2)=\cos\theta_1,\quad
\pa_2 S(s_1,s_2)=-\cos\theta_2.
\end{equation}
Therefore, $F(s_1,\theta_1)=(s_2,\theta_2)$ if and only if
$\pa_1 S(s_1,s_2)=\cos\theta_1$ and
$\pa_2 S(s_1,s_2)=-\cos\theta_2$.
Rewriting \eqref{generating} in total differential form, we get
$dS=\cos\theta_1 ds_1-\cos\theta_2 ds_2$. Taking exterior differential
and using $d^2 S=0$,
we get $\sin\theta_2 ds_2\wedge d\theta_2=\sin\theta_1 ds_1\wedge d\theta_1$.
Therefore, the 2-form $\omega=\sin\theta ds\wedge d\theta$ is invariant under
$F$, so is the probability measure
$d\mu=\frac{1}{2|\Gamma|}\sin\theta ds d\theta$ on $M=\Gamma\times(0,\pi)$.

To show that $F$ is a twist map on $M=\Gamma\times (0,\pi)$,
let's consider the image of $M_s=\{s\}\times (0,\pi)$ under $F$.
Let $\gamma_\theta(t)$ be the geodesic starting from $\Gamma(s)$
in the direction of $\theta$, and $t_\theta>0$ be the first moment that
$\gamma_\theta(t)$ hits $\Gamma$.
The hitting position is exactly $s_1(\theta)= p_1\circ F(s,\theta)$.
Since $Q$ is a strictly convex domain on $S^2$, the map
$s_1:(0,\pi)\to \Gamma$ is monotonically increasing.
Therefore, $F$ is a symplectic twist map on $M$.
\end{proof}

\begin{coro}
Let $\Gamma\in\Upsilon^r(S^2,g)$, and $F$ be the billiard map induced by
$\Gamma$. Then for any coprime  positive integers $(p,q)$ with $q\ge 2$,
there exists a periodic orbit $\cO_{p,q}$ of period $q$ that goes
around the table $p$ times after one period.
\end{coro}
Such an orbit $\cO_{p,q}$
is called a Birkhoff periodic orbit of type $(p,q)$.
See \cite{Bir,Ban} for more details.
Note that there may be some periodic orbits of non-Birkhoff type.

\subsection{Criterion of nondegenerate periodic orbits}

Let $W(s_1,\dots,s_n)= \sum_{k=1}^n S(s_{k-1},s_k)$ be the action
on the space of the $n$-periodic configurations $(s_k)$ in the sense that
$s_{n+k}=s_k$ for all $k$.
Then $x=(s,\theta)\in M$ is a periodic point with period $n$
if and only if $\pa_{k}W(s_1,\dots,s_n)=0$ for each $k=1,\dots, n$,
where $x_k=F^kx=(s_k,\theta_k)$ be the iterates of $x$ under the billiard map.
Given a critical $n$-periodic configuration $(s_k)$,
we let  $D^2W(s_1,\dots,s_n)=(\pa_{ij}^2W)$
be the $n\times n$ Hessian matrix of $W$
at $(s_1,\dots,s_n)$.

Let $D_xF^n$ be the tangent map at $x$ (counted to its period),
which is a $2\times 2$ matrix with determinant 1
(since $F$ preserves the symplectic form $\omega$).
Then $x$ is said to be {\it non-degenerate},
if $1$ is not an eigenvalue of $D_xF^n$.
The later condition is equivalent to $\text{Tr}(D_xF^n)\neq 2$.
Mackay and Meiss proved in  \cite{MM83}
that the trace $\text{Tr}(D_xF^n)$ is closely related to the Hessian
$D^2W$ of
$W$ at its critical path $(s_1,\dots,s_n)$.
\begin{pro}\label{hessian}
Let $\{F^k x=(s_k,\theta_k)\}$ be a periodic orbit of period $n$,
$W_2=D^2W(s_1,\dots,s_n)$ be the Hessian matrix of $W$
at $(s_1,\dots,s_n)$. Then
$\ds \mathrm{Tr}(D_xF^n)-2=(-1)^n\cdot \det(W_2)\cdot
\left(\prod_{i=1}^n S_{12}(s_{i-1},s_i)\right)^{-1}$.
\end{pro}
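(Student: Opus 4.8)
The plan is to pass from the nonlinear billiard map to its linearization along the periodic orbit, express that linearization entirely through second derivatives of the generating function $S$, and then recognize $\mathrm{Tr}(D_xF^n)$ as a transfer-matrix (continuant) quantity that coincides with $\det(W_2)$ up to the product of twist coefficients.

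First I would rewrite the generating-function relations $\pa_1 S=\cos\theta_1$, $\pa_2 S=-\cos\theta_2$ from \eqref{generating} in the canonical momentum $y=-\cos\theta$, so that $\omega=ds\wedge dy$ and, along the orbit, $S_1(s_k,s_{k+1})=-y_k$ and $S_2(s_k,s_{k+1})=y_{k+1}$. Differentiating these two implicit relations and writing $a_k=S_{11}(s_k,s_{k+1})$, $b_k=S_{12}(s_k,s_{k+1})$, $c_k=S_{22}(s_k,s_{k+1})$, I obtain the one-step tangent matrix $DF_k$ sending $(\delta s_k,\delta y_k)$ to $(\delta s_{k+1},\delta y_{k+1})$ with entries rational in $a_k,b_k,c_k$; the twist hypothesis $b_k\neq 0$ is exactly what makes this system solvable, and one checks directly that $\det DF_k=1$.

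Next I would identify $W_2$ as the cyclic (periodic) Jacobi matrix with diagonal $d_k=S_{22}(s_{k-1},s_k)+S_{11}(s_k,s_{k+1})$ and sub/super-diagonal and corner entries $b_k$. The key observation is that eliminating the $\delta y$ variables from the linearized equations produces exactly the second-order recurrence $b_{k-1}\delta s_{k-1}+d_k\delta s_k+b_k\delta s_{k+1}=0$, i.e. the row equations of $W_2\,\delta s=0$. Encoding this recurrence by the transfer matrices $M_k=\left(\begin{smallmatrix} -d_k/b_k & -b_{k-1}/b_k \\ 1 & 0\end{smallmatrix}\right)$ and conjugating $DF_k=C_{k+1}^{-1}M_kC_k$ by the periodic, invertible change of frame $C_k$ that trades $\delta y_k$ for $\delta s_{k-1}$, the product telescopes over one period and yields $\mathrm{Tr}(D_xF^n)=\mathrm{Tr}(M_n\cdots M_1)$.

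Finally I would establish the determinant identity $\det(W_2)=(-1)^n\big(\prod_k b_k\big)\big(\mathrm{Tr}(M_n\cdots M_1)-2\big)$ and rearrange, using $\prod_k b_k=\prod_i S_{12}(s_{i-1},s_i)$, to reach the stated formula. This step carries the real work: expanding $\det(W_2)$ one sees a main tridiagonal continuant plus a cyclic correction coming from the corner entry $b_n=S_{12}(s_n,s_1)$, and the two ``around-the-cycle'' products combine into $2\prod_k b_k$; these are precisely the monomials absent from $\mathrm{Tr}(M_n\cdots M_1)$, which is what forces the additive constant $-2$ and the global sign $(-1)^n$. I expect this bookkeeping to be the main obstacle, and it is cleanest to handle by induction on $n$ (a continuant recursion) after isolating the corner contribution, or alternatively by multiplying out the transfer matrices and reading off the diagonal entries directly. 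As a byproduct one gets $\mathrm{Tr}(D_xF^n)=2$ if and only if $\det W_2=0$, which recovers the nondegeneracy criterion.
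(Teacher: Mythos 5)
Your proposal is mathematically correct, but there is nothing in the paper to compare it against: Proposition \ref{hessian} is stated there without proof, quoted from MacKay--Meiss \cite{MM83}. What you have written is in essence a faithful reconstruction of the MacKay--Meiss transfer-matrix argument, and the checkpoints all hold: with $y=-\cos\theta$ the relations $S_1(s_k,s_{k+1})=-y_k$, $S_2(s_k,s_{k+1})=y_{k+1}$ follow from \eqref{generating}; the one-step linearization $\delta s_{k+1}=-(a_k\delta s_k+\delta y_k)/b_k$, $\delta y_{k+1}=b_k\delta s_k+c_k\delta s_{k+1}$ has determinant $1$ and is solvable exactly when the twist $b_k\neq 0$; eliminating the $\delta y$'s yields precisely the rows $b_{k-1}\delta s_{k-1}+d_k\delta s_k+b_k\delta s_{k+1}=0$ of $W_2\,\delta s=0$; and the frame change $C_k$ trading $\delta y_k$ for $\delta s_{k-1}$ conjugates $DF_k$ into $M_k$, so $\mathrm{Tr}(D_xF^n)=\mathrm{Tr}(M_n\cdots M_1)$, with the passage from $(s,\theta)$ to $(s,y)$ harmless since trace is invariant under conjugation. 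The determinant identity you defer is true and provable as you suggest; a clean alternative to the continuant induction is to insert a spectral parameter and observe that $\det(W_2-\lambda I)$ and $(-1)^n\bigl(\prod_k b_k\bigr)\bigl(\mathrm{Tr}\,T(\lambda)-2\bigr)$, where $T(\lambda)=M_n(\lambda)\cdots M_1(\lambda)$, are degree-$n$ polynomials in $\lambda$ with the same leading coefficient and the same zeros (periodic solutions of the three-term recurrence correspond to eigenvalue $1$ of $T(\lambda)$, and $\det(T-I)=2-\mathrm{Tr}\,T$ because $\det T=1$), so they agree identically; setting $\lambda=0$ gives the formula. One bookkeeping point you should make explicit: for $n=2$ the ``corner'' and off-diagonal entries of the cyclic Jacobi matrix merge, so that $W_2=\begin{pmatrix} d_1 & b_1+b_2\\ b_1+b_2 & d_2\end{pmatrix}$; your identity still holds verbatim in this degenerate case, since $b_1b_2\bigl(\mathrm{Tr}(M_2M_1)-2\bigr)=d_1d_2-(b_1+b_2)^2=\det W_2$, and this matters here because the paper applies the proposition precisely to period-$2$ minimizers in its positive-entropy corollary.
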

Note that  $\mathrm{Tr}(D_xF^n)= 2$ if and only if
$\det(W_2)= 0$. So we have the following equivalent formulations:
\begin{enumerate}
\item a periodic orbit $x=T^n x$ of the billiard map $F$ is
nondegenerate;

\item a critical cycle $(s_1,\dots,s_n)$ of the action functional $W$ is
nondegenerate.
\end{enumerate}
Birkhoff made the following observation in \cite{Bir}.
Let $(s_1,\dots,s_n)$ be an $n$-periodic configuration at where
$W$ attains its minimum. Assume the corresponding
periodic orbit $x$ is nondegenerate. Then $D^2W(s_1,\dots,s_n)$
is positive definite, and $\mathrm{Tr}(D_xF^n)-2>0$.
So the periodic point $x$ corresponding to each minimizer
 turns out to be a hyperbolic periodic point.

\subsection{Curvature and focusing time of  a tangent vector}
Now we describe some geometrical features
of the tangent map of a billiard map $F:M\to M$ on the
configuration space $S^2$ , see \cite{Vet} for more details.
We start with the coordinate system $\{(s,\theta):s\in\Gamma, \theta\in(0,\pi)\}$ on $M$,
where $s$ is the arc-length parameter of the boundary $\Gamma=\pa Q$,
and $\theta$ is the angle of a unit tangent vector $\bv\in T_{\Gamma(s)}Q$ with the
direction $\dot \Gamma(s)$.
Let $x_0=(s_0,\theta_0)\in M$, $\gamma_0(t)$ be the geodesic generated by $x_0$,
$V=a\pa_s+b\pa_\theta\in T_{x_0} M$ be a tangent vector on the phase space $M$,
and $m(V)=\frac{b}{a}$ be the slope of $V$ with respect to the $(s,\theta)$-coordinate.
Let $c:(-\epsilon,\epsilon)\to M$ is a smooth curve
passing through $c(0)=x_0$ such that $V=\dot c(0)$.
Then for each $-\epsilon<u<\epsilon$,
the point $c(u)$ will determine a  geodesic
on $Q$, say $\gamma_{u}(\cdot)$.
Putting them together, we get a beam of geodesics
around the geodesic $\gamma_0$.
A curve $\rho:(-\epsilon,\epsilon)\to S^2$ with $\rho(0)=\Gamma(s_0)$
and $\rho(u)\in \gamma_{u}$ is called
a {\it wave-front} corresponding to $V\in T_x M$,
if $\rho(u)$ is perpendicular to each $\gamma_{u}$ at $\rho(u)$.
Let $\cB(V)$ be the geodesic curvature of $\rho$ at $\rho(0)$.
Note that $\cB(V)$ does not depend on the choices of curves $c$ with $\dot c(0)=V$.

\noindent{\bf Convention.}
A wave-front has
negative curvature if it is focusing, and has
positive curvature if it is dispersing. Let $\cB(V)=\infty$
if $p$ itself is a focusing point.

Any  (infinitesimal) wave-front of billiard trajectories on $Q$
focuses at some point forward and some point backward on $S^2$
(not necessarily in $Q$),
say $p_+$ and $p_-$.
Let $f(V)=d(\Gamma(s_0),p_{+})$ be the forward focusing distance (time)
of the wavefront related to $V\in T_{x_0} M$. Set $f(V)=0$
when $\Gamma(s_0)$ itself is a focusing point of the wavefront of $V$.

Note that $\cB(V)$ and $f(V)$ can be defined via normal Jacobi
fields. That is, let $\mathbf{J}(t)=\frac{d}{du}\Big|_{u=0}\gamma_u(t)$
be the Jacobi field generated by a beam of geodesics
$\gamma_u$ along $\gamma_0$.
Jacobi fields are characterized by Jacobi equation:
$\ddot{\mathbf{J}}+R(\mathbf{J},\dot \gamma_0)\dot\gamma_0=0$,
where $R$ is the curvature tensor..
A Jacobi field $\mathbf{J}$ is said to be {\it normal},
if $\mathbf{J}(t)$ is perpendicular to $\dot \gamma(t)$
for all $t$. In this case we can write $\mathbf{J}(t)=J(t)\mathbf{n}_{t}$
for some scalar function $J(t)$, where $\mathbf{n}_{t}$ is the unit
normal vector field along $\gamma_0(t)$.
The scalar Jacobi function $J(t)$ satisfies the scalar Jacobi equation
$\ddot J+K_g\cdot J=0$,
where $K_g$ is the Gaussian curvature of $(S^2,g)$.
Note that we have $\cB(V)=\frac{\dot J(0)}{J(0)}$, $f(V)=\min\{t\ge 0: J(t)=0\}$.
So the relation between $\cB(V)$ and $f(V)$ is given by
the solution of the Jacobi equation.
For example, if $\cB(V)=0$ then the wavefront focuses at  two
{\it focal points} along the geodesic $\gamma_x$
(one forward focal point, and one backward focal point),
and these two focal points are conjugate along $\gamma_x$.

The wave-front of a vector $V$ changes its curvature
at the moment when the billiard orbit  collides with the boundary $\Gamma$.
More precisely, let $\cB^{\pm}(V)$ be the curvature of the wavefront before
and after the reflection with $\Gamma$, respectively.
The relation between the curvature $\cB^{\pm}(V)$ and the slope $m(V)$ is given by
\[m(V)=\cB^-(V)\sin\theta-\kappa(s)=\cB^+(V)\sin\theta+\kappa(s),\]
where $s= p_1(x)$ is the projection to the first coordinate of $x$.

Now let $x=(s,\theta)\in M$, $Fx=(s_1,\theta_1)$, $V\in T_x M$,
 $V_1=DF(V)\in T_{x_1}M$,
and $\rho$ be a wavefront related to $V$.
Let $\cB_t(V)$ and $f_t(V)$ be the curvature
and forward focusing time of the wavefront during
the free flight time $0<t <d_1=d_\Gamma(s,s_1)$,
$\cB^{\pm}(V_1)$ and $f^{\pm}(V_1)$ be the curvature and
focusing time right before/after the collision $t\to d_1\pm0$.
Then we have
\begin{itemize}
\item[(1).] $\cB_t(V)=\frac{\dot J(t)}{J(t)}$, where $J(t)$ is the solution of
Jacobi equation;

\item[(2).] $\ds \cB^+(V_1)=\cB^-(V_1)-\frac{2\kappa(s_1)}{\sin\theta_1}$,
where $\kappa(s_1)>0$ is the curvature at $\Gamma(s_1)$.
\end{itemize}
Item (2) is the so called Mirror Formula for geometrical optics on surfaces.
Note that $f_t(V)=f(V)-t$  when $t\le f(V)$. If $f(V)< d_\Gamma(s,s_1)$,
then the wavefront focuses
between two consecutive reflections, $\cB_t(V)$ jumps from
$-\infty$ to $+\infty$, and $f_t(V)$ jumps
from 0 to the next focusing time.

\begin{example}
In the case that $g=g_0$ is the round metric on $S^2$,
the quantities $\cB(V)$, $f(V)=d(p,p_{+})$ and
$\hat f(V)=d(p,p_{-})$
are related by the following formula:
\begin{equation}\label{g0}
f(V)+\hat f(V)=\pi,\quad  \cB(V)=-\cot f(V)=\cot \hat f (V).
\end{equation}
Let $\cB(V)=\cot \alpha_0$. Then
$\cB_t(V)=\cot (\alpha_0+t)$ for all $0\le t < d(s,s_1)$.
\begin{proof}[Proof of \eqref{g0}]
Let's consider the circles $L_\alpha$
of latitude on $S^2$ surrounding the north pole,
where $\alpha$ is the angle of the circle with the positive $z$-axis.
Then the radius of $L_\alpha$ is $r(\alpha)=\sin\alpha$,
and the geodesic curvature is $\kappa(\alpha)=\sqrt{1/r^2-1}=\cot \alpha$.
Then the results follow from the observation that
$d(p,p_{+})=\alpha$ and $d(p,p_{-})=\pi-\alpha$
(and the convention on the choices of signs of the curvature).
\end{proof}
\end{example}

\subsection{Some generic properties of periodic orbits}

Let $(S^2,g)$ be a convex sphere, $Q\subset S^2$
be a strictly convex domain, and $F:M\to M$ be the induced
billiard map on $Q$, where $M=\Gamma\times (0,\pi)$.
Note that the geodesics on Riemannian manifolds are time-reversal invariant
(this may not be true on general Finsler manifolds).
Similarly, the billiard dynamics on a convex table $Q\subset S^2$
is time-reversal invariant. More precisely, let
$\Theta:M\to M, (s,\theta)\mapsto (s,\pi-\theta)$ be the time-reversal
map. Then $F\circ \Theta= \Theta\circ F^{-1}$.
So if $\cO$ is a periodic orbit of $F$, so is $\Theta(\cO)$;
and these two orbits are distinct if $\pi/2\notin p_2(\cO)$, 
where $p_2:M\to (0,\pi)$ is the projection to the $\theta$ coordinate.
Note that $\cO$ and $\Theta(\cO)$ have the same dynamical characteristics.
We only need to consider one of them when making perturbations.

\begin{definition}
Two different periodic orbits $\cO_1$ and $\cO_2$ are said to be
{\it essentially different},
if $\cO_2$ is not the time-reversal of $\cO_1$.
\end{definition}

There are some special features for the periodic orbits on
the billiard map on $Q$ (see \cite{Sto87}):
\begin{enumerate}
\item it is possible that $|\cO(p)|\neq | p_1(\cO(p))|$:
the orbit passes some reflection point more than
once during a minimal period.

\item  it is possible that $| p_1(\cO_1\cup \cO_2)|\neq | p_1(\cO_1)|+| p_1(\cO_2)|$:
two essentially different periodic orbits have some common reflection points.
\end{enumerate}
Take the round table on standard sphere for example:
on each point $s\in\Gamma$, there exist periodic
orbits of type $(m,n)$ for all $(m,n)$.
This happens even among the orbits with the same period:
the $(1,5)$-orbit (pentagon) and the $(2,5)$-orbit (pentagram).

Before giving the precise definition,
we need to distinguish the following two cases:
symmetric and nonsymmetric orbits.
A periodic orbit $\cO(p)$ is  said to be {\it symmetric},
if $\theta_k=\pi/2$ for some $k$.
Along such an orbit, the period $n=2m$ is an even number,
the right angle reflections happen exactly twice,
and the orbit travels back and forth
between these two reflection points. See \cite{Sto87}.
A periodic orbit is said to be nonsymmetric, if it is not symmetric.
\begin{definition}
If a periodic orbit $\cO(p)$ is nonsymmetric,
then  the defect of $p$ is defined by the difference $d(p)=|\cO(p)|-| p_1(\cO(p))|$.
If $\cO(p)$ is symmetric,
then the defect of $p$ is defined by $\ds d(p)=\frac{1}{2}|\cO(p)|+1-| p_1(\cO(p))|$.
\end{definition}
\begin{figure}[h]
\begin{overpic}[height=50mm]{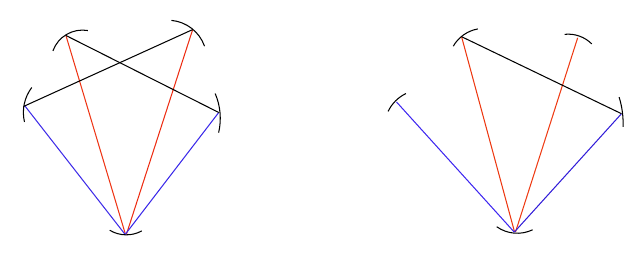}
\put(30,5){(A)}
\put(67,5){(B)}
\end{overpic}
\caption{Periodic orbits with positive defects. (A): nonsymmetric case;
(B): symmetric case. This is merely a simplistic sketch, to illustrate the two types
of defects of periodic orbits.}
\label{positivedefect}
\end{figure}
See Fig. \ref{positivedefect} for a schematic sketch of (planar) periodic orbits
with positive defect: (A) for nonsymmetric case,
and (B) for symmetric case.

\begin{pro}\label{zero}
Let $P_n(\Gamma)$ be the set of points fixed by $F^n$.
There is a residual subset $\cS_{n}\subset\Upsilon^r(S^2,g)$,
such that the following hold for the billiard map of each $\Gamma\in\cS_{n}$,
\begin{enumerate}
\item every periodic orbit in $P_n(\Gamma)$ has zero defect;

\item two essentially different periodic orbits in  $P_n(\Gamma)$
have no common reflection point.
\end{enumerate}
\end{pro}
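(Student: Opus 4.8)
The plan is to prove the stronger statement that the desired set is open and dense, hence residual. First I would pass to the open dense subset $\Upsilon_0\subset\Upsilon^r(S^2,g)$ on which every orbit in $P_n(\Gamma)$ is \emph{nondegenerate}; this is a finite-jet condition on the action $W$ (it is governed by $\det W_2\neq0$ via Proposition \ref{hessian}), so ordinary finite-dimensional Thom transversality on the space of critical configurations applies to it, in contrast to the global manifold conditions of Theorems \ref{KS} and \ref{main}. On $\Upsilon_0$ the orbits in $P_n$ are isolated, finite in number, and continue smoothly in $\Gamma$ by the implicit function theorem. Since $n$ has finitely many divisors and, for each period $m\mid n$, there are finitely many index pairs, the two forbidden coincidences reduce to finitely many \emph{patterns} $\sigma$: either a single orbit reflecting at one foot $s_0$ at two distinct phase points (positive defect), or two distinct orbits reflecting at a common $s_0$. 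In both cases the two branches meet $\Gamma$ at \emph{distinct} angles $\theta\neq\theta'$, since equal angle would force the two phase points to coincide, contradicting either minimality of the period or distinctness of the orbits. It therefore suffices to show that the coincidence-free set $\cU_\sigma\subset\Upsilon_0$ is open and dense for each $\sigma$, and take $\cS_n=\bigcap_\sigma\cU_\sigma$.

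Openness is immediate on $\Upsilon_0$: the finitely many orbits in $P_n$ and their reflection feet depend continuously on $\Gamma$ by continuation, so if the prescribed feet are distinct for $\Gamma$ they remain distinct for all nearby $\Gamma'$, and no new orbit in $P_n$ can appear near a nondegenerate configuration. For density, given $\Gamma\in\Upsilon_0$ with a coincidence of type $\sigma$ at $s_0$, I would apply a $C^r$-small normal perturbation $\Gamma_u$ supported in a small arc $J$ around $s_0$, chosen so small that $J$ contains no other reflection foot of the orbits involved; write the perturbation as a bump $u\,h(s)$ with $h(s_0)>0$. The key elementary fact is that, to first order, a normal displacement of the boundary by $\delta$ moves the foot of a ray incident at angle $\theta$ \emph{along} $\Gamma$ by $\delta\cot\theta$ (grazing rays move a lot, perpendicular ones not at all), consistent with the Jacobi-field/Mirror-formula analysis of the previous subsection. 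Because the incoming geodesics of the two branches issue from feet lying outside $J$, their directions are unchanged to first order, so the two feet $s_i(u),s_j(u)$ near $s_0$ satisfy
\[
\frac{d}{du}\bigl(s_i(u)-s_j(u)\bigr)\Big|_{u=0}=h(s_0)\,(\cot\theta-\cot\theta')\neq0,
\]
since $\cot$ is injective on $(0,\pi)$ and $\theta\neq\theta'$. Hence for small $u\neq0$ the two feet separate, breaking the coincidence, while every other foot and orbit moves continuously by $O(u)$; as only finitely many further coincidence conditions could be created, all of them are avoided for all but finitely many small $u$, giving a nearby $\Gamma_u\in\cU_\sigma$.

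The hard part will be the density step, precisely the transversality identity above, because a perturbation of $\Gamma$ is semi-global and one cannot separate the two branches "by hand"; the real content is ruling out a hidden cancellation that would keep the feet together, which here is secured by the angle-dependence $\cot\theta$ of the foot displacement together with the already-established inequality $\theta\neq\theta'$. The remaining delicate point is the \emph{symmetric} case, where the defect is normalized by $\tfrac12|\cO(p)|+1-|\pi_1(\cO(p))|$ to discount the coincidences forced by the back-and-forth structure between the two right-angle feet: there I would restrict to perturbations that preserve this palindromic structure (e.g. symmetric bumps, or bumps away from the two right-angle feet) so that the forced coincidences are kept intact while the genuine extra ones are removed by the same mechanism. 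For conclusion (2) the identical localized push at the shared foot $s_0$ separates the two orbits, the two branches again being incident at distinct angles.
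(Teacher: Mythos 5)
There is a genuine gap, and it sits exactly where you placed your first step. You begin by passing to an open dense set $\Upsilon_0$ on which all orbits in $P_n(\Gamma)$ are nondegenerate, claiming this follows from ``ordinary finite-dimensional Thom transversality'' applied to $\det W_2\neq 0$. But in this paper that statement is Proposition \ref{elem}, and its density proof \emph{uses} Proposition \ref{zero}: the perturbation argument there starts by picking $\Gamma\in\cU\cap\cS_n$ precisely so that every orbit has zero defect, which is what allows a bump to be localized at a reflection point visited only once per period. Your order of quantifiers is therefore circular. The obstruction is not a technicality: when an orbit has positive defect, a single normal bump at the multiply-visited foot enters several slots of the action $W$ simultaneously, and degeneracy can then be robust under all normal perturbations --- the Example in Section \ref{localperturb} exhibits exactly this, and Proposition \ref{dege} carries zero defect as a hypothesis for the same reason. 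Since the only admissible perturbations are deformations of $\Gamma$ (not independent perturbations of the critical configuration), transversality for the nondegeneracy condition is not ``ordinary'' and cannot be invoked before zero defect is in hand. Relatedly, your strengthening to ``open and dense'' contradicts the paper's own remark that $\cS_n$ may fail to be open for general domains; openness is recovered only later, for the combined condition inside $\cU_n$, by the finiteness-and-continuity of $P_n$.

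The second gap is the derivative identity driving your density step. The formula $\frac{d}{du}(s_i(u)-s_j(u))\big|_{u=0}=h(s_0)(\cot\theta-\cot\theta')$ computes where the \emph{frozen} incoming rays meet the displaced boundary; it is not the motion of the feet of the continued periodic orbit. Under continuation, every reflection point and every incoming direction moves at order $u$: by the implicit function theorem the configuration moves by $-(D^2W)^{-1}\partial_u\nabla W$, so the separation rate of the two feet involves the full inverse Hessian, and the global feedback you call a ``hidden cancellation'' is exactly what your computation fails to exclude. The paper avoids orbit continuation altogether: in the Appendix it fixes an admissible pattern $w$ with positive defect (or the joint pattern of two orbits sharing a foot), defines the bad set $\Sigma_w$ in the multijet bundle $J^1_t(\bT,S^2)$ by the reflection conditions $\zeta_l=0$, $1\le l\le r$, together with $\partial_{s_k}H=0$, $2\le k\le t$, verifies that the defining map $\cK$ is a submersion (normal pushes of $y_a$ vary the $\phi_l$ and tangent rotations at $y_k$ vary the $\psi_k$, independently), so that $\Sigma_w$ has codimension $r+t-1\ge t+1>t=\dim\bT^{(t)}$, and concludes by the Multijet Transversality Theorem --- which also explains why the conclusion is residual rather than open and dense. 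Your observation that $\theta\neq\theta'$ at a shared foot is correct and is a reasonable heuristic for why the bad set should have positive codimension, but to make it rigorous you would in effect have to redo the paper's jet-space argument rather than the dynamical continuation you propose.
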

Note that the periodic orbits of period 2 always have zero defect.
So $\cS_{2}=\Upsilon^r(S^2,g)$.

For billiards in the Euclidean domain, Proposition \ref{zero}
have been proved by Stojanov \cite{Sto87}.
Note that the following two statements are equivalent
for a given $\Gamma$:
\begin{enumerate}
\item[-] every periodic orbit has zero defect;

\item[-] any periodic path $s_0,s_1,\dots,s_n=s_0$ with positive defect
is not a billiard orbit.
\end{enumerate}
Then Proposition \ref{zero} is proved by showing that the second statement
 holds generically. The proof for billiards on $S^2$ follows
from the same idea, and is sketched in the Appendix.

\begin{remark}
Let $\cS=\bigcap_{n\ge 1} \cS_n$,
which  contains a residual subset of $\Upsilon^r(S^2,g)$.
Then for each $\Gamma\in\cS$,
\begin{enumerate}
\item[(a)] every periodic orbit of $F$ has zero defect;

\item[(b)] two different periodic orbits of $F$
do not pass any common reflection point.
\end{enumerate}
One would expect that $\cS_n$ could be open and dense, not just residual.
However, this may not be true for general domains.
In next section we will prove that the properties (a) and (b) do hold
on an open and dense subset of the convex domains in $\Upsilon^r(S^2,g)$.
\end{remark}
\begin{remark}
The following properties are obtained in \cite{PeSt87,PeSt1,PeSt2}
for billiard systems on a generic connected domain in $\bR^d$:
\begin{enumerate}
\item[(I)] the set of points fixed by $F^n$ is finite;

\item[(I\!I)] the eigenvalue of each periodic point fixed by $F^n$ is not in $\cA$,
\end{enumerate}
where $\cA$ is any countable subset of $\bR$ given in advance.
The 2D version has been obtained by Lazutkin \cite{Laz}.
We will prove that these properties hold
on an open and dense subset of convex billiards,
and the sets of points fixed by $F^n$ actually vary continuously.
This continuity plays a key role in the study of homoclinic
and heteroclinic intersections.
\end{remark}

\subsection{Parametric Transversality Theorem}
Let $M$ and $N$ be two manifolds, $K\subset M$ be a subset
and $V\subset N$ be a submanifold. A smooth map $f:M\to N$
is said to be {\it transverse} to $V$ at $x\in K$ if one of the following holds:
\begin{itemize}
\item $fx\notin V$;

\item $y=fx\in V$ and $D_xf(T_{x}M)+T_yV=T_yN$.
\end{itemize}
Then $f$ is  said to be {\it transverse} to $V$ along $ K$,
denoted by $f\pitchfork_K V$, if $f$ is transverse to $V$ at each $x\in K$.
Note that for a diffeomorphism $f\in\text{Diff}^r(M)$,
a periodic point $x$ of period $k$ is nondegenerate if and only if
the map $(\text{Id},f^n):M\to M\times M$
is transverse to the diagonal $\Delta\subset M \times M$.

Let $M$ be a smooth manifold, $D\subset \bR^q$ be an open
subset, and $\rho: D\to C^r(M,M\times M)$ be a continuous map for some $r\ge1$.
The evaluation of $\rho$, denoted by $\rho^{\text{ev}}: D\times M\to M\times M$
is given by $(v,x)\mapsto \rho(v)(x)$.
Now we can state the Parametric Transversality Theorem (see \cite{Rob95}).
\begin{them}\label{PTT}
Suppose $\rho: D\to C^r(M,M\times M)$ is continuous and
$\rho^{\text{ev}}: D\times M\to M\times M$ is $C^r$.
Let $K\subset M$ be a compact subset such that
$\rho^{\text{ev}}$ is transverse to $\Delta$ along $D\times K$.
Then the set $\{v\in D:\rho(v)\pitchfork_K \Delta\}$ is open and dense in $D$.
\end{them}

An intuitive description is also  given in \cite{Rob95}:
if there are enough parameters with which to make the necessary perturbations
at {\it one point at a time},
then the above theorem implies that the function can be approximated by one
which is transverse at {\it all points in the same time}.

\section{Perturbations of periodic points of billiard systems}\label{localperturb}

There are various types of perturbation techniques in the study of dynamical systems.
One of the widely used technique is {\it Franks' Lemma}, which allows us
to manipulate the derivatives along a periodic orbit.
The perturbations for billiard dynamics are very limited, since one
cannot perturb the billiard map $F$ directly, while the perturbation of
the underlining table changes the dynamics (semi)-globally.
See  Visscher's thesis \cite{Vis} for several results on Franks's lemma in
geometric contexts (geodesics flows and billiards).
In \cite{DOP2} the effect of the perturbation of a planar billiard system is computed
explicitly via a step by step induction.
It is difficult to generalize their approach
to dynamical billiards on surfaces with non-constant curvature.
In this section we present another proof, which uses the geometric features of
the tangent vectors of the phase space $M$
on the configuration space $S^2$.

We first give some basic definitions. Let $p$ be a periodic point of $F$ of period $n$,
$D_pF^n:T_pM\to T_pM$ be the tangent map, which can be viewed
as a matrix in $\text{SL}(2,\bR)$. Let $\lambda_p$ be an eigenvalue of $D_pF^n$.
Then $p$ is said to be {\it hyperbolic} if $|\lambda_p|\neq 1$,
be {\it parabolic} if $\lambda_p=\pm 1$, and be {\it elliptic} if otherwise.
Recall that a periodic point $p$ is said to be {\it degenerate} if $\lambda_p=1$,
and be {\it nondegenerate} if it is not degenerate.

Let $\tau(p)$ be the trace of $D_pF^n$.
Then we have the following equivalent definition:
$p$ is said to be  hyperbolic if $|\tau(p)|>2$,
be parabolic if $|\tau(p)|=2$, be elliptic if $|\tau(p)|<2$,
be degenerate if $\tau(p)=2$, and be nondegenerate if $\tau(p)\neq 2$.
All nondegenerate periodic points persist under small perturbations.

\subsection{Useful perturbations of billiard systems}
The following perturbations have been widely used in the study of generic
properties of billiards.
\begin{definition}
Let $s_0\in\Gamma$, and $I\subset\Gamma$ be a neighborhood of $s_0$.
Then a {\it normal perturbation} $\Gamma_\epsilon$
of $\Gamma$ at $s_0$ supported on $I$ is a convex curve on $S^2$ that satisfies
$\Gamma_\epsilon(s)=\Gamma(s)$ for $s=s_0$ and for $s\notin I$,
$\dot\Gamma_\epsilon(s_0)=\dot\Gamma(s_0)$,
while the curvature changes to $\kappa_\epsilon(s_0)=\kappa(s_0)+\epsilon$.
\end{definition}

The normal perturbations are essentially the only types of perturbations that preserve
the orbit $\cO(p)$, in the meanwhile, change the derivatives of $DF^n$ at $p$.
However, a degenerate periodic point may be {\it robustly degenerate} under normal
perturbations.

\begin{example}
Let $\gamma$ be a geodesic starting at a point $\bp\in S^2$,
and $\bq$ be a conjugate  point of $\bp$ along $\gamma$.
Let $Q\subset S^2$ be a convex domain containing
the geodesic segment $\gamma$ from $\bp$ to  $\bq$ as a diameter.
Then there is a periodic orbit of period 2 traveling along $\gamma$ back and forth.
Let $p=(\bp,\pi/2)$ be the corresponding point on the phase space $M$.
Then the wavefront leaving $\bp$ as a focusing point will bounce back and
forth between these two reflection points $\bp$ and $\bq$,
and focus at each reflection.
If $p$ is a degenerate periodic point for $F$,
then the degeneracy of $p$ persists under normal perturbations.
\begin{proof}
Our proof actually works for any period.
This general formulation will be used later.
Let $p$ be a periodic point such that there is no multiple reflections at
its base point $s_0$, $\Gamma_\epsilon$ be a normal perturbation
of $\Gamma$ at $s_0$.
Then for each $V\in T_pM$,
the total effect of $D_p F^n_\epsilon$ on $V$
is a shift of the curvature of the returning wave-front of $D_p F^n(V)$:
$\cB^+(D_p F^n_\epsilon(V))=\cB^+(D_p F^n(V))-\frac{2\epsilon}{\sin\theta}$,
and a shift of the slope
$m(D_p F^n_\epsilon(V))=m(D_p F^n(V))-\epsilon$.
Therefore,
$D_p F^n_\epsilon=\pm\begin{bmatrix} 1 & 0 \\ -\epsilon & 1\end{bmatrix}\circ D_p F^n$.
Then the sign is positive, since $\Gamma_\epsilon$ is a small perturbation
of $\Gamma$.

In the setting of the above example,
we denote $D_p F^2=\begin{bmatrix} a & b \\ c & d\end{bmatrix}$.
Then $b=0$ since the line $\langle\pa_\theta \rangle$ is invariant, and
$a=d=1$, since $a+d=2$ (degeneracy assumption) and $ad=1$
(symplectic property).
Therefore, $D_p F^2=\begin{bmatrix} 1 & 0 \\ c & 1\end{bmatrix}$
and $D_p F^2_\epsilon=\begin{bmatrix} 1 & 0 \\ c-\epsilon & 1\end{bmatrix}$.
This implies that  $p$ is degenerate for any normal perturbation.
\end{proof}
\end{example}

This type of persistence of degeneracy of periodic orbits (with higher periods)
may happen for the convex billiards on $S^2$ and for planar billiards.
To overcome this difficulty,
we need to consider another type of perturbations, which shift the base point
$s_0$ along the normal direction at $\Gamma(s_0)$.
It is very likely that, after the shifting perturbation,
the orbit passing through $p$ is not even closed. Luckily for us,
such a shift is only needed when the reflection at $p$ is the right angle,
and there is no multiple reflections at its base point $s_0$ within one period of $p$.
In (and only in) this case, the periodic orbit
$\cO(p)$ stays the same after the shift of $\Gamma$
along the normal direction at $\Gamma(s_0)$.

\subsection{Perturbations of periodic points}
Let $A=\begin{bmatrix} a & b \\ c & d\end{bmatrix}\in\mathrm{SL}(2,\bR)$,
and $A_\epsilon=\begin{bmatrix} 1 & 0 \\ \epsilon & 1\end{bmatrix}\circ A$.
Then $\text{Tr}(A)=a+d$ and $\text{Tr}(A_\epsilon)=a+d+\epsilon b$.
Given a periodic point $p$ of period $n$, we let
$D_pF^n=\begin{bmatrix} a_p & b_p \\ c_p & d_p\end{bmatrix}$,
and denote $\tau(p):=\text{Tr}(D_pF^n)=a_p+d_p$.

Note that the dynamics near a hyperbolic periodic point is topologically conjugate
to the linearized map $D_pF^n$ (by Hartman--Grobman Theorem)
and is well understood. However, the dynamics surrounding the degenerate
and elliptic ones are quite complicated, very sensitive
to the arithmetic properties of the linearization of $F^n$ at $p$,
and depend on the nonlinear part of $F$.
\begin{pro}\label{dege}
Let $\Gamma\in\Upsilon^r(S^2,g)$, and
Let  $p$ be a periodic point of the billiard map $F$ with zero defect.
Suppose $p$ is not hyperbolic.
Then there is a $C^{r}$ small perturbation $\Gamma_\epsilon$ of $\Gamma$
such that the trace $\tau_\epsilon(p)\neq \tau(p)$.
\end{pro}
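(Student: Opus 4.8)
The plan is to perturb so as to change the trace $\tau(p)$ of the return map, distinguishing the two obstructions to using an ordinary normal perturbation. Recall from the earlier example that a normal perturbation of $\Gamma$ at $s_0=\pi_1(p)$, supported on a small neighborhood and not touching any other reflection point of $\cO(p)$, produces $D_pF^n_\epsilon = \begin{bmatrix} 1 & 0 \\ \epsilon & 1\end{bmatrix}\circ D_pF^n$, so that $\tau_\epsilon(p)=\tau(p)+\epsilon\, b_p$ where $b_p$ is the upper-right entry of $D_pF^n$. Thus a single normal perturbation changes the trace precisely when $b_p\neq 0$, and the whole difficulty is confined to the degenerate case $b_p=0$. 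Since $p$ has zero defect, every reflection point of $\cO(p)$ is visited exactly once (in the nonsymmetric case) or the orbit is symmetric and bounces between two right-angle reflections; this is what lets me support perturbations at individual reflection points without disturbing the rest of the orbit.

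First I would dispose of the generic case $b_p\neq 0$: a normal perturbation at $s_0$ of size $\epsilon$ gives $\tau_\epsilon(p)=\tau(p)+\epsilon b_p\neq\tau(p)$ for all small $\epsilon\neq 0$, and we are done. So assume $b_p=0$. Geometrically $b_p=0$ means the vertical line $\langle\pa_\theta\rangle$ is invariant under $D_pF^n$, i.e. the returning wavefront issued as a focusing point from $s_0$ comes back focusing at $s_0$. The hypothesis that $p$ is not hyperbolic forces $|\tau(p)|\le 2$; combined with $b_p=0$ and $\det D_pF^n=1$ one gets $a_pd_p=1$ with $a_p+d_p=\tau(p)$, and when $p$ is degenerate this pins down $a_p=d_p=\pm1$, exactly the robustly-degenerate configuration from the example. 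The point is that no normal perturbation at \emph{any single} reflection point can escape $b_p=0$ if the geometry forces focusing at that point; I would show that the $(1,n)$-entry of the return map is a sum of contributions from the reflection points weighted by focusing data, and that $b_p=0$ signals a focusing wavefront.

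The key step is therefore to use a perturbation of a genuinely different type: shifting the base point $s_0$ along the normal direction at $\Gamma(s_0)$, as flagged in the remark preceding the proposition. This shift is admissible exactly when the reflection at $p$ is a right angle and $s_0$ is visited only once, and in that (and only that) case the combinatorial orbit $\cO(p)$ survives the shift while the free-flight lengths and hence the trace change. I would compute the effect of such a normal shift on the Jacobi-field data: moving $\Gamma(s_0)$ inward or outward by $\delta$ changes the two adjacent segment lengths and thus alters the product and composition governing $\tau(p)$, giving $\frac{d}{d\delta}\tau_\delta(p)\neq 0$. Concretely, I would write the return map as a product of the free-flight matrices $\begin{bmatrix} 1 & \ell_i \\ 0 & 1\end{bmatrix}$ (in wavefront-curvature coordinates, using $\cB_t(V)$ and the Jacobi equation) interleaved with the reflection matrices $\begin{bmatrix} 1 & 0 \\ -2\kappa_i/\sin\theta_i & 1\end{bmatrix}$ from the Mirror Formula, and differentiate in $\delta$.

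The main obstacle I anticipate is the right-angle case with $b_p=0$, where one must verify that the normal \emph{shift} really is an admissible perturbation keeping $\cO(p)$ a closed billiard orbit, and that its derivative of the trace is nonzero rather than accidentally vanishing as well. The delicate bookkeeping is to show that the shifted curve stays strictly convex and $C^r$-close, that the right-angle condition guarantees the combinatorial orbit persists, and that the variation of the free-flight lengths does not conspire (via the symplectic relation $a_p=d_p=\pm1$) to leave $\tau(p)$ stationary. I expect this to reduce, after using the Jacobi-field formalism of the preceding subsection, to showing a single explicit focusing-time quantity has nonzero derivative under the shift, which the strict convexity $\kappa(s_0)>0$ and the no-conjugate-points hypothesis on $Q$ should guarantee.
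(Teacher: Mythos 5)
There is a genuine gap: your plan does not cover nonsymmetric orbits with $b_p=0$. You correctly note that the normal shift of the base point is admissible only when the reflection at $p$ is a right angle --- ``in that (and only that) case'' the orbit $\cO(p)$ survives --- but right-angle reflections occur only on symmetric orbits, so for a nonsymmetric periodic orbit with $b_p=0$ your scheme offers no admissible perturbation at all. Moreover, the premise that pushed you to the shift, namely that no normal perturbation at any single reflection point can escape $b_p=0$, is false: $b_p=0$ says the vertical line $\langle\pa_\theta\rangle_p$ is invariant under $D_pF^n$ \emph{at $p$}, not at the other points of the orbit. The paper's proof exploits exactly this. Since the trace is a conjugacy invariant along the orbit, $\tau(Fp)=\tau(p)$ and $\tau_\epsilon(Fp)=\tau_\epsilon(p)$, it suffices to find one point of $\cO(p)$ whose $b$-entry is nonzero and perform the normal perturbation there. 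When $c_p\neq 0$, the vertical at $p$ is the \emph{unique} invariant line, so the focusing wavefront $\rho_p$ is the unique invariant wavefront along the whole orbit; because $Q$ contains no conjugate points, $\rho_p$ cannot also focus at $s_1=\pi_1(Fp)$, hence $\langle\pa_\theta\rangle_{Fp}$ is not invariant, i.e.\ $b_{Fp}\neq 0$, and a normal perturbation at $s_1$ changes the trace. The case $D_pF^n=\pm I_2$ (where no line is distinguished) is reduced to this one by a preliminary normal perturbation at $s_0$. Zero defect in the nonsymmetric case is what guarantees $s_1$ is a simple reflection, so the perturbation is supported away from the rest of the orbit.

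The shift perturbation is needed only in the symmetric case, where every interior reflection is doubled and the only simple reflection points are the two right-angle ends, at which the invariant wavefront may indeed focus. Even there, the paper does not use the shift to move the trace directly, as you propose to establish by differentiating the product of free-flight and mirror matrices --- a computation you explicitly leave open and which would have to rule out an accidental critical point; instead the shift is used only to destroy the invariance of the focusing wavefront, producing $b_p\neq 0$, after which the same one-line normal-perturbation argument $\tau_\epsilon(p)=\tau(p)+\epsilon\, b_p$ concludes. Repairing your nonsymmetric case along these lines would also let you replace your symmetric-case derivative computation by this softer two-step argument.
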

In other words, we have the following qualitative descriptions:
\begin{enumerate}
\item if $p$ is degenerate, then  after the perturbation, it is either hyperbolic or elliptic;

\item if $p$ is elliptic, then the rotation number of $p$ can be shifted continuously
under the perturbation.
\end{enumerate}

\begin{proof}
Let $p$ be a periodic point with period $n$.
Let $\Gamma_\epsilon$ be a $C^r$ small normal perturbation of $\Gamma$ at
$s_0= p_1(p)$ which increases the curvature at $s_0$ by $\epsilon^r$.
Then we have
\[\tau_\epsilon(p)=\text{Tr}(D_pF^n_\epsilon)=a_p+d_p-\epsilon^r\cdot b_p.\]
If $b_p\neq 0$, then we are done. In the following we assume $b_p=0$.

If $b_p=0$, then we have $a_p\cdot d_p=1$, which implies $|a_p+d_p|\ge 2$.
Note that $p$ is assumed to be non-hyperbolic.
So we actually have $|a_p+d_p|=2$, and
$D_pF^n=\pm\begin{bmatrix} 1 & 0 \\ c_p & 1\end{bmatrix}$.
Then the line
$\langle\pa_\theta\rangle_p$ is fixed by $D_pF^n$.
Equivalently, the corresponding wavefront $\rho_p$ focuses at $s_0= p_1(p)$, and will
focus at $s_0$ again when it returns after one period. So we only need to
show that a small perturbation can destroy the last property (for some point on
the orbit) of $p$.

\noindent{\bf Case 1.} The orbit of $p$ is not symmetric.
Then the zero defect property implies that there is no multiple
reflection along the orbit $\cO(p)$.

\noindent{\bf Case 1a.} $c_p\neq 0$.
In this case, $\langle\pa_\theta\rangle_p$
is the only line fixed by $D_pF^n$, and $\rho_p$ is the only
invariant wavefront at $p$ and along the whole orbit of $p$.
Clearly this wavefront does not focus at $s_1= p_1(Fp)$,
since there is no conjugate point on $\Gamma$.
Therefore $\langle\pa_\theta\rangle_{Fp}$
is not fixed by $D_{Fp}F^n$, since the wavefront
corresponding to $\langle\pa_\theta\rangle_{Fp}$ focuses at $s_1$
(hence is not invariant). This implies $b_{Fp}\neq 0$,
and a normal perturbation $\Gamma_\epsilon$ of $\Gamma$ is performed at $s_1$.
Then $\tau_\epsilon(Fp)=\tau(Fp)-\epsilon^r \cdot b_{Fp}$,
and the proposition follows
since $\tau(Fp)=\tau(p)$ and $\tau_\epsilon(Fp)=\tau_\epsilon(p)$.

\noindent{\bf Case 1b.} $c_p=0$. In this case $D_pF^n=\pm I_2$. We first make
a normal perturbation at $s_0$, and get $D_pF^n_{\epsilon}\neq\pm I_2$.
Then we do another perturbation given as in Case 1a.

\noindent{\bf Case 2.}
Now we assume that the orbit of $p$ is symmetric.
Without loss of generality we assume $n>2$.
Note that there always exist multiple reflections
(even through there is no defect).
More precisely, there are exactly two simple reflections among the orbit $\cO(p)$,
and all other reflections happens exactly twice (forward and backward).
Moreover, the orbit has perpendicular reflections at these two simple reflections.

\noindent{\bf Case 2a.}
There is a wavefront that focuses exactly at those two ends.
In this case we make a small shift of $\Gamma$ along the normal direction
at one end, denote the resulting table by $\hat \Gamma$, 
so that the focused wavefront is not invariant any more.
Then one automatically gets $\hat b_p\neq 0$
and $\tau_\epsilon(p)=\tau(\hat p)-\epsilon^r\cdot \hat b_p$.

\noindent{\bf Case 2b.} No wavefront focuses exactly at those two ends.
In this case  we make a normal perturbation
at one of the two ends as we did for Case 1.
This completes the proof.
\end{proof}

\section{Kupka--Smale properties for convex billiards}\label{transver}

Let $Q\subset S^2$ be a strictly convex domain
with $C^r$ smooth boundary $\Gamma$,
$M=\Gamma\times (0,\pi)$ be the phase space
of  the billiard map $F$ induced on $Q$.
For each $n\ge 2$, let $P_n(\Gamma)\subset M$ be the set of points
fixed by $F^n$.
In the following we will show that there is an open and dense subset
$\cU_n$ such that for each $\Gamma\in\cU_n$,
$P_n(\Gamma)$ is finite and depends continuously on $\Gamma$.


Denote by $A=\bT\times (0,\pi)$ the open annulus
and by $\bar A=\bT\times [0,\pi]$ the closed annulus.
Let $\mathcal{E}(\bar A)$ be the set of positive twist homeomorphisms on $\bar A$
that {\it fix every point} on the two
boundary circles $\bT\times \{0,\pi\}$.
Let $f\in \mathcal{E}(\bar A)$, and $f_A$ be the restriction of $f$ on the open annulus $A$.
The following proposition shows that $P_n(f_A)$
cannot accumulate to the boundary
$\Gamma\times\{0,\pi\}$.

\begin{pro}\label{referee}
Let $n\ge 2$ and  $f\in \cE(\bar A)$. There exist a compact set $K\subset A$
and a small neighborhood $\cW$ of $f$ in the $C^0$ topology,
such that $P_n(g_A)\subset K$ for each $g\in \cW$.
\end{pro}
\begin{remark}
In the previous version of the paper
Proposition \ref{referee} is formulated for convex billiards,
and its proof relies heavily on some geometric feature of
billiard systems.
The current formulation of Proposition \ref{referee}
and its proof were provided to the author
in the report of one of referees.
It is clear that the current formulation is better and could be useful in other situations.
Moreover,  the proof is much shorter and easier to follow than the previous version.
The contribution from the anonymous referee is kindly acknowledged by the author.
\end{remark}
\begin{proof}
Let $n\ge 2$ and  $f\in \cE(\bar A)$ be given. Let $\tilde f$ be the unique lift
of $f$ to $\bR\times [0,\pi]$ that fixes all the points of $\bR\times\{0\}$.
There exists a neighborhood $V_0\supset \bT\times\{0\}$ such that for any
$z\in V_0$, $0\le p_1(\tilde f \tilde z)-p_1(\tilde z)< \frac{1}{3n}$, where $\hat z$
is a lift of $z$ and $p_1$
is the projection from $\bR\times [0,\pi]$ to its first coordinate.
Then there exists a small neighborhood $\cV_0$ of $f$ in $\cE(\bar A)$
such that for each $g\in \cV_0$, the lift $\tilde g$ satisfies
$0\le p_1(\tilde g \tilde z)-p_1(\tilde z)< \frac{1}{2n}$ for any lift $\tilde z$
of a point $z\in V_0$.

Pick a smaller neighborhood $W_0\subset V_0$ of $\bT\times\{0\}$ whose closure
is contained in $\bigcap_{0\le k<n}f^{-k}V_0$.
Then there exists a smaller neighborhood $\cW_0\subset \cV_0$ of $f$ in $\cE(\bar A)$
such that $W_0\subset \bigcap_{0\le k<n}g^{-k}V_0$ for each $g\in \cW_0$.
Let $g\in\cW_0$.
Then we have $0\le p_1(\tilde g^n \tilde z)-p_1(\tilde z)< \frac{1}{2}$
for each lift $\tilde z$ of $z\in W_0$. Therefore, $P_n(g_A)\cap W_0=\emptyset$
 for each $g\in\cW_0$.

Similarly we construct $W_1$ and $\cW_1$ for the other boundary component
$\bT\times\{\pi\}$, and show that $P_n(g_A)\cap W_1=\emptyset$
for each $g\in\cW_1$. Then let $K=A\backslash (W_0\cup W_1)$,
and $\cW=\cW_0\cap \cW_1$. This completes the proof.
\end{proof}

\subsection{Finiteness of $P_n(\Gamma)$ for most $\Gamma$}
A periodic point $x$ is said to be {\it non-degenerate},
if $1$ is not an eigenvalue of $D_xF^{m(x)}:T_xM\to T_xM$,
where  $m(x)$ be the minimal period  of $x$.
The minimal period of a periodic point $x\in P_n(\Gamma)$
satisfies $m(x)|n$, and may be strictly less than $n$.
Then $x$ is said to be {\it non-degenerate under $F^n$},
if $1$ is not an eigenvalue of $D_xF^{n}:T_xM\to T_xM$.

Let $\cU_n\subset\Upsilon^r(S^2,g)$ be the set of
strictly convex domains $\Gamma\in \Upsilon^r(S^2,g)$ such that
every periodic point $x\in P_n(\Gamma)$ is non-degenerate under $F^n$.
\begin{lem}\label{finite}
The set $P_n(\Gamma)$ is a finite set for each $\Gamma\in \cU_n$,
and the map $\Gamma\mapsto P_n(\Gamma)$ is continuous
on $\cU_n$.
\end{lem}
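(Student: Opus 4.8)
The plan is to prove the two assertions separately: finiteness of $P_n(\Gamma)$ for each $\Gamma\in\cU_n$, and continuity of the map $\Gamma\mapsto P_n(\Gamma)$ on $\cU_n$. The key facts I would lean on are that $P_n(\Gamma)$ is compact (by the preceding corollary, via Lemma \ref{Mn}), and that every point of $P_n(\Gamma)$ is non-degenerate under $F^n$ when $\Gamma\in\cU_n$, meaning $1$ is not an eigenvalue of $D_xF^n$. Non-degeneracy is precisely the hypothesis that lets the Implicit Function Theorem apply to the equation cutting out fixed points of $F^n$.

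For finiteness, I would argue that each $x\in P_n(\Gamma)$ is an \emph{isolated} fixed point of $F^n$. Consider the map $G=F^n-\mathrm{id}$ on the central annulus $M_n$ (working in the coordinate system $(s,\theta)$, or more precisely on the relevant chart). A periodic point $x\in P_n(\Gamma)\cap M_n$ is a zero of $G$, and since $1$ is not an eigenvalue of $D_xF^n$, the differential $D_xG=D_xF^n-\mathrm{Id}$ is invertible. By the Implicit Function Theorem $x$ is an isolated zero of $G$, hence isolated in $P_n(\Gamma)\cap M_n$. Now $P_n(\Gamma)\cap M_n$ is compact (closed subset of the compact $M_n$), and a compact set all of whose points are isolated is finite. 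Since $P_n(\Gamma)=\bigcup_{k=0}^{n-1}F^k\bigl(P_n(\Gamma)\cap M_n\bigr)$ is a finite union of images of a finite set under a homeomorphism, $P_n(\Gamma)$ is finite.

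For continuity at a fixed $\Gamma_0\in\cU_n$, I would use that the billiard map $F_\Gamma$ depends continuously (in fact $C^{r-1}$) on $\Gamma$ in the $C^r$ topology on $\Upsilon^r(S^2,g)$, so the map $(\Gamma,x)\mapsto F_\Gamma^n(x)-x$ and its $x$-derivative vary continuously with $\Gamma$. At each of the finitely many points $x_i\in P_n(\Gamma_0)$, non-degeneracy gives an invertible $D_{x_i}(F_{\Gamma_0}^n-\mathrm{id})$, so the parametrized Implicit Function Theorem yields, for $\Gamma$ near $\Gamma_0$, a unique fixed point $x_i(\Gamma)$ near $x_i$ depending continuously on $\Gamma$. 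To conclude that these exhaust $P_n(\Gamma)$ for $\Gamma$ close to $\Gamma_0$, I would use the uniform confinement to the annulus: by Lemma \ref{Mn} every periodic orbit meets $M_n$, and one checks that $\delta_n$ and $\delta^\ast$ (hence $M_n$) can be taken uniform over a neighborhood of $\Gamma_0$, so no periodic points can escape to the boundary $\partial M$. A compactness-plus-contradiction argument then shows that any sequence $\Gamma_j\to\Gamma_0$ with a fixed point $y_j$ staying away from all $x_i$ would, after passing to a subsequence, produce a limit fixed point of $F_{\Gamma_0}^n$ distinct from the $x_i$, a contradiction; this gives upper semicontinuity, while the IFT branches give lower semicontinuity, and together continuity.

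The main obstacle I expect is the global counting step in continuity, not the local IFT: ruling out the creation of ``new'' periodic points under perturbation and the escape of periodic points toward the degenerate boundary $\partial M=\Gamma\times\{0,\pi\}$. This is exactly where the billiard-specific confinement lemma (Lemma \ref{Mn}) is essential and must be made uniform in $\Gamma$; an abstract diffeomorphism would allow $P_n$ to be noncompact, so the argument genuinely relies on the convex-billiard geometry. A secondary technical point is verifying that $\delta^\ast=\min\{\pi_2\circ F_\Gamma(s,\pi/2):s\in\Gamma\}$ stays bounded below uniformly on a neighborhood of $\Gamma_0$, which follows from continuity of $F_\Gamma$ and compactness of $\Gamma$, but should be stated carefully.
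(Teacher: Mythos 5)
Your proposal is correct and follows essentially the same route as the paper: nondegeneracy of points fixed by $F^n$ makes them isolated (the paper argues via a convergent sequence in the compact set $P_n(\Gamma)\cap M_n$, which is the same Implicit Function Theorem fact packaged as a contradiction), and continuity is proved exactly as you do, by combining upper semicontinuity (limits of fixed points are fixed points) with lower semicontinuity (persistence of nondegenerate fixed points). The one place you go beyond the paper's write-up is in explicitly verifying that $\delta^\ast$ and $\delta_n$, hence the confining annulus $M_n$, can be taken uniform over a neighborhood of $\Gamma_0$ so that periodic points cannot escape toward $\partial M$ --- the paper leaves this implicit, and your care here is warranted rather than a deviation.
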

The proof is omitted since it is a classical application
of  the local inversion theorem
in the study of dynamical systems. Here we use the local uniform compactness
of the set $P_n(\Gamma)$ proved in Proposition \ref{referee}.

Note that there is no bifurcation of periodic points
in $\cU_n$. So we have the following corollary.
\begin{coro}
The cardinal map $\Gamma\in\cU_n\to |P_n(\Gamma)|$ is locally constant.
\end{coro}

Now we state the first main result of this section.
\begin{pro}\label{elem}
Let $2\le r<\infty$.
Then the set $\cU_n$ is an open and dense subset of $\Upsilon^r(S^2,g)$.
\end{pro}
The proof of  Proposition \ref{elem} consists of two parts: the openness and
the denseness of $\cU_n$. The proof of openness of $\cU_n$
is quite standard and follows easily from Lemma \ref{finite}.
The proof of the denseness of $\cU_n$ is quite long and involved.
We first give a direct proof for $n=2$ to illustrate the idea
of the proof. The proof for the general case is given after that.

\vskip.1in

Now we start to prove the denseness of  $\cU_2$. 
First we introduce a useful notation.
Given an open interval $I=(a-\epsilon,a+\epsilon)$,
the subinterval $I^r=(a-\epsilon^r,a+\epsilon^r)$ will be called the core of $I$.
\begin{proof}[Proof of the denseness of $\cU_2$]
Let $\Gamma\in\Upsilon^r(S^2,g)$ be parameterized by $\bT\to S^2$.
Given $\epsilon>0$, pick a sequence of open intervals
$I_i=(s_i-\epsilon,s_i+\epsilon)$, $1\le i\le q_2$
such that the union of their cores
$\ds \bigcup_{i=1}^{q_2} I^r_i$ covers $\bT$.
We pick $\epsilon$ small enough such that
each geodesic started on $\Gamma$ with $\theta=\pi/2$
hits each arc $I_{i}$ no more than once.
Then we cover the central line $M_{\pi/2}:=\Gamma\times\{\pi/2\}\subset M$
by finitely many open subsets $B_j\subset M$, $1\le j\le m_2$,
such that for each $j\in\{1,\dots,m_2\}$ and each $k=0,1$, there exists $i=i(j,k)$
such that
$ p_1(F^kB_j)\subset I_{i(j,k)}^r$.
From now on we fix a set $B_j\subset M$ and the corresponding index $i=i(j,0)$.

By taking $\epsilon$ smaller if necessary, we may assume that
there exists a local coordinate map $\phi_i:B(0,2)\to S^2$ around $\Gamma(I_i)$
such that $\phi_i([-1,1])\supset \Gamma(I_i)$,
where $B(0,2)\subset \bR^2$ is the 2D disk of radius $2$.
Clearly $\phi_i$ does not reflect the curvature of $\Gamma$.
Given $(s,\alpha)\in\bR^2$, let
$\Gamma_i(s,\alpha)$ be a $C^r$-small and $C^\infty$-smooth perturbation
of $\Gamma$ supported on $I_i$
(viewed in the local coordinate system $\phi_i:B(0,2)\to S^2$) that
\begin{enumerate}
\item[a).] shifts $I_i^r$ $s$ distance along
the geodesic passing through $s_i$ in the direction of $\theta=\pi/4$,

\item[b).]  then rotates the shifted piece of $I_i^r$ around its center 
by an angle $\alpha$,

\item[c).]  the complement $\Gamma\backslash I_i$ stays unchanged.
\end{enumerate}
Then we use a $C^\infty$ bump function to connect the two pieces
$I_i^r$ and $\Gamma\backslash I_i$.
Note that the exact number $\theta=\pi/4$ in Step a) is not important,
as along as $\theta\neq\pi/2$.
Since $\Upsilon^r(S^2,g)$ is open, there exists an open disk
$D_{i}\subset \bR^2$ around $(s,\alpha)=(0,0)$, such that
$\Gamma_{i}(s,\alpha)\in\Upsilon^r(S^2,g)$ and it is
$(C^r,\epsilon)$-close to $\Gamma$ for each $(s,\alpha)\in D_i$.
Let $F_{i,s,\alpha}$ be the billiard map induced by $\Gamma_i(s,\alpha)$.
This gives rise to a map $\zeta_i:(s,\alpha)\in D_i\to F_{i,s,\alpha}$,
and an evaluation map $\zeta^{\text{ev}}_i:D_i\times M\to M$,
$(s,\alpha,x)\mapsto F_{i,s,\alpha}(x)$.

Note that $\{F_{i,s,0}(x): (s,0)\in D_i\}$
and $\{F_{i,0,\alpha}(x): (0,\alpha)\in D_i\}$ are two smooth curves
passing through $F_{i,0,0}(x)=Fx$. Let $\gamma_s$
be the geodesic generated by $F_{i,s,0}(x)$, and  $\eta_\alpha$
be the geodesic generated by $F_{i,0,\alpha}(x)$, respectively.
Then
$\{\gamma_s: (s,0)\in D_i\}$
and $\{\eta_\alpha: (0,\alpha)\in D_i\}$ are two beams of geodesics
surrounding the geodesic generated by $Fx$.
Let $\mathbf{J}_1=\frac{d}{ds}\Big|_{s=0}\gamma_s$ and
$\mathbf{J}_2=\frac{d}{d\alpha}\Big|_{s=\alpha}\eta_{\alpha}$
be the corresponding Jacobi fields.
It follows from the construction of the perturbations $\Gamma_i(s,\alpha)$
that $\mathbf{J}_1$ and $\mathbf{J}_2$ are two linearly independent solutions
of the Jacobi equation.
In particular, the two curves $F_{i,\cdot,0}(x)$ and $F_{i,0,\cdot}(x)$
are transverse to each other at $Fx$.
Therefore,
\begin{equation}\label{trans2i}
D_{(0,0,x)}\zeta^{\text{ev}}_i(\bR^2\times\{0_x\})=T_{Fx}M.
\end{equation}

Note that  $F_{i,s,\alpha}\equiv F$ on the set of points not based on $I_i$.
Therefore, $F_{i,s,\alpha}^2x=F\circ F_{i,s,\alpha}x$
for any $x$ based on $I_i$ and for any $(s,\alpha)\in D_i$.
Then let us consider the graph map $\rho_{2,i}$ of $\zeta_i$ and
the corresponding evaluation map $\rho_{2,i}^{\text{ev}}$, which are given by
\[\rho_{2,i}:D_i\mapsto C^{r-1}( M, M\times M),
(s,\alpha)\mapsto (\text{Id},F^2_{i,s,\alpha}),\]
\[\rho_{2,i}^{\text{ev}}:D_i\times M\to M\times M,
(s,\alpha,x)\mapsto (x,F^2_{i,s,\alpha}(x)).\]
Let $\Delta\subset M\times M$ be the diagonal.
We need to show that $\rho_{2,i}^{\text{ev}}$ is transverse to $\Delta$
along $(0,0)\times B_j$. 
The map $\rho_{2,i}^{\text{ev}}$ is certainly transverse to $\Delta$
at the places that they do not intersect.
In the following we assume that they do intersect,
and let $x\in B_j$ be a point such that
$(x,F^2x)\in \Delta\cap \rho_{2,i}^{\text{ev}}((0,0)\times B_j)$.
In particular this implies $F^2x=x$.
Note that
\begin{equation}\label{trans2iev}
D_{(0,0,x)}\rho_{2,i}^{\text{ev}}(\bR^2\times\{0_x\})=T_{x}M\times \{0_x\},
\quad
D_{(0,0,x)}\rho_{2,i}^{\text{ev}}(\{(0,0)\}\times T_xM)=T_{(x,x)}\Delta.
\end{equation}
Clearly  $T_{x}M\times \{0_x\}$ and $T_{(x,x)}\Delta$ span $T_{(x,x)}(M\times M)$.
Therefore,
the image of $\rho_{2,i}^{\text{ev}}$ is transverse
to $\Delta\subset M\times M$ along $(0,0)\times B_j$.

Now we combine all the pieces together and define a new map
\begin{equation}\label{zeta2}
\zeta:(s_i,\alpha_i)_{i=1}^{q_2}\in \prod_{1\le i\le q_2} D_i
\mapsto F_{(s_i,\alpha_i)_{i=1}^{q_2}},
\end{equation}
such that $F_{(0^{2i-2},s_i,\alpha_i,0^{2q_2-2i})}=F_{i,s_i,\alpha_i}$.
Note that the combined perturbations may
be destructively large and even destroy the convexity of $\Gamma$.
However there does exist a small open neighborhood of $\mathbf{0}=0^{2q_2}$ in
$\prod_{1\le i\le q_2} D_i$, say $D^{2q_2}$, such that $F_{(s_i,\alpha_i)_{i=1}^{q_2}}$
is well defined and $C^{r-1}$ close $F$.
Once again, let $\zeta^{\text{ev}}:D^{2q_2}\times M\to M$
be the evaluation map of $\zeta$, let $\rho_2$ be the map from $D^{2q_2}$
to $C^{r-1}(M, M\times M)$
such that
$\rho_2\big((s_i,\alpha_i)_{i=1}^{q_2}\big)
=\big(\text{Id},F^2_{(s_i,\alpha_i)_{i=1}^{q_2}}\big)$.
We claim that the evaluation map $\rho_2^{\text{ev}}$ is transverse
to $\Delta\subset M\times M$ along $\mathbf{0}\times M$.
This is clear since for each intersection
$(x,F^2x)\in\rho_2^{\text{ev}}(\mathbf{0}\times M)\cap\Delta$,
we have $F^2x=x$ and hence $x\in M_{\pi/2}$, that is, an orbit bouncing
back and forth between two points on $\Gamma$. Then $x\in B_j$ for some
$1\le j\le m_2$. Let $i=i(j,0)$ be given such that $ p_1(B_j)\subset I_i^r$.
Then we have
\begin{equation}\label{rho2}
D_{(\mathbf{0},x)}(\rho_2^{\text{ev}})(\bR^{2q_2}\times\{0_x\})
\supset D_{(0,0,x)}(\rho^{\text{ev}}_{2,i})(\bR^{2}\times\{0_x\})=T_{x}M\times \{0_x\},
\;
D_{(\mathbf{0},x)}(\rho_2^{\text{ev}})(\mathbf{0}\times T_xM)=T_{(x,x)}\Delta.
\end{equation}
Then there exists an open neighborhood
$D\subset D^{q_2}$ of $\mathbf{0}$,
such that the combined evaluation map $\rho_2^{\text{ev}}$ is transverse to $\Delta$ along
$D\times M$.
Then by Theorem \ref{PTT},
there is a dense set subset $E\subset D$ such that for each
$(s_i,\alpha_i)_{i=1}^{q_2}\in E$,
the map $\big(\text{Id},F^2_{(s_i,\alpha_i)_{i=1}^{q_2}}\big)$
is transverse to $\Delta$ along $M_{\pi/2}$. In other words,
$\Gamma_{(s_j,\alpha_j)_{i=1}^{q_2}}\in \cU_2$ for each $(s_i,\alpha_i)_{i=1}^{q_2}\in E$, and $\Gamma$ lies in the closure of
$\{\Gamma_{(s_j,\alpha_j)_{i=1}^{q_2}}:
(s_j,\alpha_j)_{i=1}^{q_2}\in E\}\subset \cU_2$.
This shows that $\cU_2$ is dense in $\Upsilon^r(S^2,g)$.
\end{proof}

One advantage for the proof of  the denseness of $\cU_2$
is that $P_2(\Gamma)\subset M_{\pi/2}$ 
for any $\Gamma\in\Upsilon^r(S^2,g)$. 
So there is no interference when making perturbations.
For periodic orbits of higher periods, there may exist some interference
within its own orbit, since there can be some intermediate
returns to the same region on $\Gamma$ (with different directions).
So we need to take care of the possible interferences
when proving the denseness of $\cU_n$
for $n\ge 3$.
We will argue  by Strong Induction.
Suppose that we have demonstrated
the $C^r$-denseness of the open subset
$\cU_k$ for each $2\le k<n$. In the following
we will prove that the set $\cU_n$ is also  $C^r$-dense.

Let $P_n^\ast(\Gamma)$ be those periodic points
in $P_n(\Gamma)$ with minimal period less than $n$,
and $\bar P_n(\Gamma)$ be those with period exactly equal $n$.
We deal with these two parts separately.
Although a periodic point in $P_k(\Gamma)$ for $\Gamma\in\cU_k$
is non-degenerate under $F^k$,
it may  be degenerate under $F^n$.
The following lemma reduces the possible interferences from 
periodic orbits of lower periods.
\begin{lem}\label{Ukn}
Let $k<n$ with $k|n$. Then there is an open and dense subset
$\cU_{k,n}\subset \cU_k$, such that for each $\Gamma\in\cU_{k,n}$,
all periodic points in $P_k(\Gamma)$
are non-degenerate under $F^n$.
\end{lem}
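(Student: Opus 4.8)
The plan is to establish openness and denseness of $\cU_{k,n}$ in $\cU_k$ separately. Openness is soft: fix $\Gamma\in\cU_{k,n}$. Since $\Gamma\in\cU_k$, the preceding lemma and corollary guarantee that $P_k(\Gamma)$ is finite, varies continuously, and has locally constant cardinality, so no period-$k$ point is created or destroyed under small perturbations. For $x\in P_k(\Gamma)$ one has $\det(D_xF^n-I)=0$ exactly when $\mathrm{Tr}(D_xF^n)=2$, so the function $\Gamma\mapsto\min_{x\in P_k(\Gamma)}|\mathrm{Tr}(D_xF^n)-2|$ is continuous and strictly positive on $\cU_{k,n}$. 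Hence $\cU_{k,n}$ is open in $\cU_k$, and therefore open in $\Upsilon^r(S^2,g)$.

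For denseness, fix $\Gamma\in\cU_k$. First I would replace $\Gamma$ by a nearby $\Gamma_1\in\cU_k$ lying in the residual set $\cS$ with properties (a) and (b), so that every orbit of $F$ has zero defect and distinct periodic orbits share no reflection point; this is possible since $\cS$ is residual and $\cU_k$ is open. Now $P_k(\Gamma_1)$ is a finite union of orbits. For $x\in P_k(\Gamma_1)$ of minimal period $\ell\mid k$, set $B=D_xF^{\ell}$, so that $D_xF^k=B^{k/\ell}$ and $D_xF^n=B^{n/\ell}$; nondegeneracy of $x$ under $F^k$ (resp. $F^n$) means $\mathrm{Tr}(B^{k/\ell})\neq 2$ (resp. $\mathrm{Tr}(B^{n/\ell})\neq 2$). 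Since $B\in\mathrm{SL}(2,\bR)$, Cayley--Hamilton makes each $\mathrm{Tr}(B^j)$ an integer (Chebyshev) polynomial in $\tau:=\mathrm{Tr}(B)$, so the bad set $\cB=\{\tau:\mathrm{Tr}(B^{k/\ell})=2\ \text{or}\ \mathrm{Tr}(B^{n/\ell})=2\}$ is finite. A hyperbolic orbit is automatically nondegenerate under every iterate, so only the finitely many non-hyperbolic orbits require attention, and for each the current trace already avoids the $F^k$-part of $\cB$ (as $\Gamma_1\in\cU_k$).

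The perturbation step treats these orbits one at a time. For a non-hyperbolic orbit $\cO(x)$, since $x$ has zero defect and is not hyperbolic, Proposition \ref{dege} supplies a normal perturbation (with the extra normal shift of its symmetric-case construction when needed) realizing a continuum of values of $\tau=\mathrm{Tr}(B)$ throughout a small interval about the current one. I would support this perturbation on a small arc around a single reflection point of $\cO(x)$; because the orbits in $P_k(\Gamma_1)$ have pairwise disjoint reflection points and these are finite in number, the arc can be chosen to meet no other reflection point, so every other orbit of $P_k(\Gamma_1)$ is left literally unchanged as a billiard trajectory while $\cO(x)$ persists. Selecting $\tau$ in the small achievable interval so as to avoid the finite set $\cB$, the orbit $\cO(x)$ becomes nondegenerate under $F^n$ while remaining nondegenerate under $F^k$. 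Repeating this over the finitely many non-hyperbolic orbits with pairwise disjoint supporting arcs yields $\Gamma_2\in\cU_k$, close to $\Gamma$, all of whose period-$k$ points are nondegenerate under $F^n$; thus $\Gamma_2\in\cU_{k,n}$, which proves denseness.

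The main obstacle is the localization forced by the semi-global nature of billiard perturbations: a single perturbation of $\Gamma$ alters the billiard map on a large part of $M$, so I must ensure that adjusting one orbit's trace neither disturbs the remaining period-$k$ orbits nor pushes them out of $\cU_k$. This is precisely where the zero-defect and no-common-reflection-point properties are indispensable, since they make all reflection points of all orbits in $P_k(\Gamma_1)$ distinct and thus separable by disjoint arcs. The delicate subcase is the symmetric orbit, where a supporting reflection point may be a right-angle vertex carrying a focusing invariant wavefront; this is exactly the situation handled by the normal-shift construction in the proof of Proposition \ref{dege}, so it causes no additional difficulty here.
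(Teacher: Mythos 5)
Your proof is correct and takes essentially the same route as the paper: openness from continuity and local constancy of the finite set $P_k(\cdot)$ on $\cU_k$, and denseness by passing to $\cU_k\cap\cS_k$ and applying Proposition \ref{dege} orbit by orbit to the non-hyperbolic orbits, with perturbations supported near reflection points of a single orbit, which the zero-defect and no-common-reflection-point properties allow to be chosen with disjoint supports leaving all other orbits (and their derivative cocycles) untouched. The only, harmless, difference is the final selection step: the paper shifts each rotation number to an irrational value (which kills degeneracy under every iterate at once), whereas you choose the trace $\tau=\mathrm{Tr}(D_xF^{\ell})$ to avoid the finite zero set of the Chebyshev-type polynomials $\mathrm{Tr}(B^{k/\ell})-2$ and $\mathrm{Tr}(B^{n/\ell})-2$, which handles exactly the pair $(k,n)$ required; you are in fact slightly more careful than the paper in noting that hyperbolic orbits need no perturbation and in spelling out the openness argument.
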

\begin{proof}
It follows from the definition that $\cU_{k,n}$ is open in $\cU_k$.
So we only need to show the denseness of $\cU_{k,n}$ in $\cU_k$.
Pick $\Gamma\in\cU_k\cap \cS_k$.  Then we perturb
one reflection point on each periodic orbit $\cO(x)$, say $\Gamma_{\epsilon,x}$
such that the rotation number $\rho_\epsilon(x)$
of that orbit changes (see Lemma \ref{dege}).
Note that the new rotation number depends
continuously on the size of the perturbation.
By choosing $\epsilon(x)$ properly,
we can assume the new rotation number is irrational.
Note that $|P_k(\Gamma)|$ is locally constant and $P_k(\Gamma)$
varies continuously with respect to $\Gamma\in\cU_k$.
After a finite steps of perturbations,
the new table is in $\cU_{k,n}$.
\end{proof}

\begin{proof}[Proof of the denseness of $\cU_n$ for $n\ge 3$]
Let $\cS_{n}$ be  the open and dense
subset given by Proposition \ref{zero}, 
and $\cU_{k,n}\subset \cU_k$ be the open and dense subset
given by Lemma \ref{Ukn} for each $k<n$ and $k|n$.
Let $\ds \cU= \cS_{n}\cap \bigcap_{k<n:k|n}\cU_{k,n}$, which is also
open and dense in $\Upsilon^r(S^2,g)$.
It suffices to show that $\cU_n$ is dense in $\cU$.
Now let fix  $\Gamma\in\cU$. 
We will show that $\Gamma$ can be approximated
by a sequence of $\Gamma_i\in \cU_n$. 
The whole discussion below
will be restricted to a small neighborhood $\cW$ of $\Gamma$
given by Lemma \ref{referee}.
In particular, let $K_n$ be the uniform compact subset $K$
given there.

It is important to notice that, each periodic point
$x\in P_n^\ast(\Gamma)$ is nondegenerate under $F^n$
(since we choose $\Gamma\in\cU$),
and is isolated in $P_n(\Gamma)$.
So we can pick an open neighborhood $U\supset P_n^\ast(\Gamma)$,
such that $P_n(\hat \Gamma)\cap \overline{U}=P_n^\ast(\hat \Gamma)\subset U$
for all $\hat \Gamma$ close to $\Gamma$.
Then the function $(\text{Id},\hat F^n)$ 
is already transverse to $\Delta$ along $\overline{U}$
for all nearby $\hat \Gamma$. Hence we only need to consider the
part $K_n\backslash U$.

Let $p_2:M\to (0,\pi)$ be the projection to the second coordinate. 
Then $p_2(K_n)$ is a compact subset of $(0,\pi)$.
Without loss of generality we assume $p_2(K_n)\subset [2\theta_n, \pi-2\theta_n]$
for some $\theta_n\in(0,\pi/4)$.
The perturbations used later
in this proof will be shiftings in the direction of $\theta_n$.
Recall that for $n=2$, $P_2(\Gamma)\subset M_{\pi/2}$ for any $\Gamma$,
and we chose $\pi/4$ in the proof.

Let $x\in M$, and $\cO_n(x)=\{x,Fx\cdots,F^{n-1}x\}$
be an orbit segment of $x$ of length $n$.
Let $s_n(x)$ be the minimal separation of the 
set $\{p_1(x), p_1(Fx), \dots, p_1(F^{n-1}x)\}$ on $\Gamma$.
For example, $s_n(x)=0$ if $F^ix$ and $F^jx$ 
are reflected on the same point on $\Gamma$ for some $0\le i< j\le n-1$.
Clearly $s_n(x)>0$ for each $x\in \bar P_n(\Gamma)$, 
since $\Gamma\in \cS_n$ and every periodic orbit
in $P_n(\Gamma)$ has zero defect. 
Therefore, $s_n(x)$ can be viewed as a quantitative version of the zero defect
phenomenon.

\noindent{\bf Claim 1.} $s_n(\Gamma):=\inf\{ s_n(x):x\in \bar P_n(\Gamma)\} >0$.

\noindent{\it Proof of Claim 1.}
Suppose on the contrary that there exists $x_k\in \bar P_n(\Gamma)$
with $s_n(x_k)\to 0$. Passing to a subsequence if necessary, we assume
$x_k\to x$, which implies $F^n x=x$ and $x$ is degenerate under $F^n$.
Since every periodic point in $P_n^\ast(\Gamma)$
is nondegenerate under $F^n$,
we must have  $x\in \bar P_n(\Gamma)$ with $s_n(x)=0$.
This implies that the orbit of $x$ has positive defect,
contradicts the choice of $\Gamma\in \cS_n$.
\hfill \qed

\vskip.05in

The next claim follows directly from the fact that $s_n(x)$ depends continuously on $x$.

\noindent{\bf Claim 2.} 
There exists an open neighborhood $V_n$ of $\bar P_n(\Gamma)$,
such that $s_n(x)\ge s_n(\Gamma)/2$ for any $x\in V_n$. 

\vskip.05in

Let $s_n(\Gamma)$ be given as above, 
and $\epsilon\in(0,1/5)$ be a positive number.
Pick a sequence of open intervals
$I_i=(s_i-\epsilon\cdot s_n(\Gamma), s_i+\epsilon\cdot s_n(\Gamma))$, $1\le i\le q_n$,
such that the union of their cores
$I_i^r=(s_i-\epsilon^r\cdot s_n(\Gamma), s_i+\epsilon^r\cdot s_n(\Gamma))$
covers $\Gamma$.
Then we can cover $K_n\backslash U$ by much smaller balls
$\{B_j:j=1,\dots,m_n\}$ such that $ p_1(F^kB_j)\subset I_{i}^r$
(for some $i=i(j,k)\in\{1,\cdots, q_n\}$),
for each $k\in\{0,\dots,n-1\}$ and for each $j\in\{1,\cdots,m_n\}$.
Note that here one cannot require $i(j,k)\neq i(j,0)$ for every $k\in\{1,\cdots, n-1\}$.
We will fix a set $B_j$ and the corresponding index $i=i(j,0)$ for a moment
and let $j$ and $i$ vary at the final step of the proof.

The perturbations we need here
are similar to those we used for proving the denseness of $\cU_2$,
just here we shift $I_{i}^r$ in the direction of $\theta_n$,
where $\theta_n\in(0,\pi/4)$ is given such that
$p_2(K_n)\subset [2\theta_n,\pi-2\theta_n]$.
More precisely, for each $(s,\alpha)\in\bR^2$,
let $\Gamma_i(s,\alpha)$ be the perturbation
supported on $I_i$ that shifts the core part $I^r_i$
along the $\theta_n$ direction,
and then rotates the shifted $I^r_i$ around its center by an angle  $\alpha$.
There is an open neighborhood $D_i$ of $(0,0)$ such that
$\Gamma_i(s,\alpha)\in\Upsilon^r(S^2,g)$ for any $(s,\alpha)\in D_i$.
Note that $F_{i,s,\alpha}\equiv F$
on the set of points not based on $I_i$. 
Therefore $F_{i,s,\alpha}^k(x)=F^{k-1}\circ F_{i,s,\alpha}(x)$
for any $k\ge 2$ until next reflection of orbit with the segment $I_i$.
Similarly we can define
\begin{enumerate}
\item  the evaluation map
$\zeta_i^{\text{ev}}: D_i\times M\to M$,
$(s,\alpha,x)\mapsto F_{i,s,\alpha}^n(x)$,

\item  the graph map
$\rho_{n,i}: D_i\to C^{r-1}(M, M\times M)$,
$(s,\alpha)\mapsto (\text{Id},F_{i,s,\alpha}^n)$, and

\item  the related evaluation map
$\rho_{n,i}^{\text{ev}}: D_i\times M \to  M\times M$,
$(s,\alpha,x)\mapsto (x,F_{i,s,\alpha}^n x)$.
\end{enumerate}
We need to show that $\rho_{n,i}^{\text{ev}}$   
is transverse to the diagonal $\Delta$
along $(0,0)\times B_j$. 
The proof of this part is slightly different from the case $n=2$,
since here we may have intermediate returns:
$p_1(F^kB_j)\cap I_i\neq\emptyset$ for some $1\le k\le n-1$.
The map $\rho_{n,i}^{\text{ev}}$ is certainly transverse to $\Delta$
at the places that they do not intersect.
In the following we assume that they do intersect,
and let $x\in B_j$ be a point such that
$(x,F^nx)\in \Delta\cap \rho_{n,i}^{\text{ev}}((0,0)\times B_j)$.
In particular this implies $F^nx=x$, and $x\in \bar P_n(\Gamma)$.
Let $V_{n,j}=V_n\cap B_j$, where $V_n$ is given by Claim 2.
Note that $V_{n,j}$ contains a small neighborhood
of $x$ in $M$, and $s_n(y)\ge s_n(\Gamma)/2$ for any $y\in V_{n,j}$.
Therefore, $p_1(F^kV_{n,j})\cap I_i=\emptyset$
for each $1\le k\le n-1$, where $i=i(j,0)$ is fixed at the beginning of the proof.
This implies that $F^n_{{i,s,\alpha}}(y)=F^{n-1}\circ F_{i,s,\alpha}(y)$
for any $y\in V_{n,j}$. In this way we exclude the possible interference
of intermediate returns. Then the same argument as in the proof of the
case $n=2$ shows that $\rho_{n,i}^{\text{ev}}$   
is transverse to the diagonal $\Delta$ at each of their intersection points,
see Eq.~\eqref{trans2i} and \eqref{trans2iev}.
Therefore, $\rho_{n,i}^{\text{ev}}$   
is transverse to the diagonal $\Delta$
along $(0,0)\times B_j$.

Now we combine all the pieces together and define a new map
\begin{equation}\label{zetan}
\zeta:(s_i,\alpha_i)_{i=1}^{q_n}\in \prod_{1\le i\le q_n}D_i
\to F_{(s_i,\alpha_i)_{i=1}^{q_n}}.
\end{equation}
Again there exists a small neighborhood of $\mathbf{0}=0^{2q_n}$,
say $D^{2q_n}$,  such that $F_{(s_i,\alpha_i)_{i=1}^{q_n}}$ is well defined
and $C^r$ close to $F$ for each $(s_i,\alpha_i)_{i=1}^{q_n}\in D^{2q_n}$.
In the same way we define the combined map
$\rho_{n}: D^{2q_n}\to C^{r-1}(M, M\times M)$
and its evaluation $\rho_{n}^{\text{ev}}: D^{2q_n}\times M \to  M\times M$.
Note that any point $x$ in the intersection
$(x,F^nx)\in \Delta\cap \rho_n^{\text{ev}}(\mathbf{0}\times M)$
satisfies $F^nx=x$, and hence $x\in P_n(\Gamma)\subset K_n$.
Moreover, if $x\in P^\ast_n(\Gamma)$ has minimal
period less than $n$, then it is already nondegenerate
with respect to $F^n$. So we are left with the case that
$x\in \bar P_n(\Gamma)$ has minimal
period exactly $n$. In this case $x\in B_j$ for some $j$.
Then using the same argument as in Eq.~\eqref{rho2}, we see that
$\rho_n^{\text{ev}}$ is transverse to $\Delta$ at
$(\mathbf{0},x)$.
Letting $x$ vary, we have that the map $\rho_n^{\text{ev}}$
is transverse to $\Delta$ along
$\mathbf{0}\times K_n$.
By the openness property of transverse intersection,
there is an open neighborhood $D\subset D^{2q_n}$ of $\mathbf{0}$
such that the map $\rho_n^{\text{ev}}$
is transverse to $\Delta$ along
$D\times K_n$. Then Theorem \ref{PTT} implies
that there exists a dense subset of parameters $E\subset D$
such that for each $(s_j,\alpha_j)_{i=1}^{q_n}\in E$,
the map $\big(\text{Id},F^n_{(s_j,\alpha_j)_{i=1}^{q_n}}\big)$
is transverse to the diagonal $\Delta$
along $K_n$. In other words, $\Gamma_{(s_j,\alpha_j)_{i=1}^{q_n}}\in \cU_n$
for each $(s_j,\alpha_j)_{i=1}^{q_n}\in E$,
and $\Gamma$ lies in the closure of
$\{\Gamma_{(s_j,\alpha_j)_{i=1}^{q_n}}:
(s_j,\alpha_j)_{i=1}^{q_n}\in E\}\subset \cU_n$.
This prove the denseness of $\cU_n$ in $\cU$ and hence in $\Upsilon^r(S^2,g)$.
\end{proof}

In the previous part of this section, we fix the regularity $r\ge 2$
and use the notation $\cU_n$.
Now we switch to $\cU_n^r$ to indicate the dependence of $\cU_n$ on
the regularity $r$.
Let $\cR^r=\bigcap_{n\ge 2}\cU_n^r$.
Recall that a periodic point is said to be {\it elementary}, if it is either
hyperbolic, or elliptic with irrational rotation number. Then we have
\begin{them}\label{KS1}
There exists a residual subset $\cR^r$ of $\Upsilon^r(S^2,g)$,
such that for each $\Gamma\in\cR^r$,
every periodic point of the billiard map induced on $\Gamma$ is  elementary.
\end{them}
\begin{remark}
The proof of Theorem \ref{KS1}
among the abstract space $\mathrm{Diff}^r_\mu(M)$ was given
in \cite{Rob70}.  Robinson's proof is based on some version of
transversality theorem. The proof using Parametric
Transversality Theorem was given later in his book \cite{Rob95}.
Generally speaking, the transversality result applies
if the perturbation space is rich enough.
This is the difficult part in the study of  dynamical billiards,
since the perturbations of the billiard  map $F$
can only be made via deformations of the billiard table $\Gamma$.
\end{remark}

Note that the proof of the denseness of Proposition \ref{elem}
does not apply to the case $r=\infty$ (at least not directly).
The dynamical nature guarantees that the genericity holds also in $C^\infty$
category.
\begin{them}\label{denseinfty}
There is a residual subset
$\cR^\infty\subset \Upsilon^\infty(S^2,g)$,
such that for each $\Gamma\in\cR^\infty$,
every periodic point of the billiard map induced on $\Gamma$ is  elementary.
\end{them}
\begin{proof}
Consider the set
$\ds \cU_n^\infty=\left(\bigcup_{r\ge n}\cU_n^r\right)\cap \Upsilon^\infty(S^2,g)$:
this set is open in $\Upsilon^\infty(S^2,g)$ and $C^r$ dense for each $r\ge n$.
Therefore $\cU_n^\infty$ is open and $C^\infty$ dense in $\Upsilon^\infty(S^2,g)$.
Let $\cR^\infty=\bigcap_{n\ge 2} \cU_n^\infty$.
\end{proof}

Let $\cV_n\subset \Upsilon^r(S^2,g)$
be the set of strictly convex domains $Q\subset S^2$
such that
\begin{itemize}
\item[(a).] each periodic orbit in $P_n(\Gamma)$ has zero defect;

\item[(b).] any two periodic orbits in $P_n(\Gamma)$ has no common reflection points.
\end{itemize}
The following proof is based on our understanding of the properties
of the billiard maps in the open and dense subset $\cU_n$ in $\Upsilon^r(S^2,g)$.
\begin{pro}
The set $\cV_n$ contains an open and dense subset of $\Upsilon^r(S^2,g)$.
\end{pro}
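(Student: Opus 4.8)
The plan is to exhibit the required open and dense subset explicitly as $\cW_n := \cU_n \cap \cV_n$, where $\cU_n$ is the open and dense set of Proposition \ref{elem}. Since $\cW_n \subset \cV_n$ by construction, everything reduces to proving that $\cW_n$ is simultaneously dense and open in $\Upsilon^r(S^2,g)$.

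Density is immediate from what is already established. The defining conditions (a) and (b) of $\cV_n$ coincide verbatim with the two conclusions of Proposition \ref{zero}, so the residual set there satisfies $\cS_n \subset \cV_n$. As $\Upsilon^r(S^2,g)$ is a Baire space and $\cU_n$ is open and dense, the intersection of the open dense set $\cU_n$ with the residual set $\cS_n$ is again dense; and $\cU_n \cap \cS_n \subset \cU_n \cap \cV_n = \cW_n$. Hence $\cW_n$ is dense.

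The substance is the openness of $\cW_n$. I would fix $\Gamma \in \cW_n$ and work within $\cU_n$, where $P_n(\Gamma)$ is finite with $|P_n(\Gamma)|$ locally constant and each periodic point admits a unique nondegenerate continuation varying continuously with the table; thus no orbit is created or destroyed near $\Gamma$, and every reflection point $\pi_1(F^k x)$ moves continuously. The first step is to check that the combinatorial type of each orbit is locally constant: a nonsymmetric orbit stays nonsymmetric, while a symmetric orbit $\cO(x)$ — being invariant under the time-reversal involution $\Theta$, which is independent of the table and obeys $F\circ\Theta = \Theta\circ F^{-1}$ — continues to a $\Theta$-invariant, hence still symmetric, orbit, by applying uniqueness of the continuation to $\Theta(\cO(x))$. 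Because $\Gamma \in \cV_n$, all reflection points attached to $P_n(\Gamma)$ are, apart from the structural forward/backward coincidences forced in the symmetric case, pairwise distinct; as there are finitely many of them, separated by a positive distance, continuity of the continuation keeps them separated for all nearby tables. This forbids any accidental self-coincidence within a single orbit (so zero defect (a) persists) and any accidental coincidence between two distinct orbits (so (b) persists), and therefore a full $\cU_n$-neighborhood of $\Gamma$ lies in $\cV_n$.

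The main obstacle will be exactly this bookkeeping of combinatorial type under perturbation: ruling out that a symmetric orbit degenerates into a nonsymmetric one, or that the structural coincidences of a symmetric orbit split apart while a new accidental coincidence forms elsewhere. The time-reversal argument, combined with the nondegeneracy and isolation of periodic points granted by membership in $\cU_n$, is what makes these types rigid; once this rigidity is secured, conditions (a) and (b) reduce to the stable nonvanishing of finitely many continuously varying distances, and the openness follows.
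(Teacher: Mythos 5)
Your proposal is correct and follows essentially the same route as the paper: density is drawn from Proposition \ref{zero} (noting $\cS_n\subset\cV_n$), and openness is established inside the open and dense set $\cU_n$, where $P_n(\cdot)$ is finite, of locally constant cardinality, and varies continuously, so that the positive minimal separation between the distinct reflection points of $P_n(\Gamma)$ persists under perturbation. Your explicit time-reversal argument (using $\Theta$-equivariance and uniqueness of nondegenerate continuations to show symmetric orbits continue as symmetric ones) fills in a bookkeeping point that the paper's brief openness argument leaves implicit, but it is a refinement of the same method rather than a different one.
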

\begin{proof}
The denseness follows from Proposition \ref{zero}.
It suffices to show the openness of $\cV_n$ in $\cU_n$.
Let  $ p_1:M\to \Gamma$ is the projection to the first coordinate,
$s_n(\Gamma)$ be the minimal separation between
the points in $ p_1(P_n(\Gamma))\subset \Gamma$.
Then $s_n(\Gamma)>0$ for each $\Gamma\in \cU_n\cap \cR_0$.
Pick a small open neighborhood  $\cU\subset\cU_n$ on which
$|P_n(\cdot)|$ is constant and $P_n(\cdot)$ varies continuously.
Then there exists a smaller neighborhood $\cV\subset \cU$ of $\Gamma$,
such that $s_n(\hat\Gamma)>0$ for each $\hat\Gamma\in\cV_n$.
Therefore, $\cV_n$ is open in $\cU_n$.
This completes the proof.
\end{proof}

\subsection{Transverse heteroclinic intersections}

Given a hyperbolic periodic point $p$ of $F$ on $M$,
the stable manifold of $p$, $W^s(p)$ consists of points $x\in M$
that $d(F^nx,F^np)\to 0$ as $n\to\infty$. Similarly we define the unstable manifold
$W^u(p)$ of $p$. Note that both stable and unstable manifolds
are immersed curves passing through $p$.
Let $W^{s,u}_{\pm}(p)$ be the branches
of $W^{s,u}(p)\backslash \{p\}$.
Let $\cW_n\subset \Upsilon^r(S^2,g)$
be the set of  convex domains $\Gamma\in \Upsilon^r(S^2,g)$,
such that for each pair of hyperbolic periodic points $p,q\in P_n(\Gamma)$,
either $W^s(p)_{\pm}\cap W^u_{\pm}(q)=\emptyset$,
or $W^s_{\pm}(p)\pitchfork_x W^u_{\pm}(q)$
for some $x\in W^s_{\pm}(p)\cap W^u_{\pm}(q)$.
\begin{pro}\label{hetero}
The set $\cW_n$ contains an open and dense subset
of $\Upsilon^r(S^2,g)$.
\end{pro}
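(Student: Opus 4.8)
The plan is to prove Proposition \ref{hetero} by the same Thom–Parametric Transversality strategy already used for Propositions \ref{elem} and the set $\cV_n$, but now applied to the intersection of stable and unstable manifolds rather than to the graph of $F^n$. First I would work inside the open and dense set $\cU_n\cap\cV_n$, so that $P_n(\Gamma)$ is finite, varies continuously, consists of non-degenerate points, and no two periodic orbits share a reflection point. For such $\Gamma$ every hyperbolic $p\in P_n(\Gamma)$ has well-defined local invariant manifolds $W^{s,u}_{\rm loc}(p)$ depending continuously (indeed $C^r$) on $\Gamma$, and their branches $W^{s,u}_{\pm}(p)$ are obtained by iterating the local pieces under $F^{\pm n}$.

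The key steps, in order, are as follows. I would fix a pair of hyperbolic periodic points $p,q\in P_n(\Gamma)$ and a compact arc $L\subset W^u_{\pm}(q)$ obtained by iterating a local unstable segment a bounded number of times; by the $\lambda$-lemma any transversality produced on such a compact piece propagates along the whole branch, so it suffices to achieve transversality of the compact arcs. Because no periodic orbit passes through the reflection points of another (property (b) of $\cV_n$), I can choose a reflection point $s_0$ lying on neither $\cO(p)$ nor $\cO(q)$ but through which the arc $L$ passes transversally in configuration space; a normal perturbation $\Gamma_\epsilon$ supported on a small neighborhood $I\ni s_0$ then leaves $p,q$ and their local manifolds unchanged while moving the arc $L$. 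I would package this into an evaluation map $\zeta^{\rm ev}\colon D\times L\to M$, $(\text{parameters},x)\mapsto$ the perturbed image of the unstable arc, and verify that varying the finitely many perturbation parameters sweeps out a neighborhood in $M$ of each intersection point with $W^s_{\pm}(p)$. This makes $\zeta^{\rm ev}$ transverse to $W^s_{\pm}(p)$, and the Parametric Transversality Theorem (as in \cite{Rob95}) yields a residual, hence dense, set of parameters for which the perturbed $W^u_{\pm}(q)$ meets $W^s_{\pm}(p)$ transversally along the compact arc. Running over the finitely many pairs $(p,q)$ and the finitely many branch choices, and intersecting the resulting dense conditions, gives denseness of $\cW_n$ in $\cU_n\cap\cV_n$, and hence in $\Upsilon^r(S^2,g)$.

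For openness I would argue that on a small neighborhood $\cV\subset\cU_n\cap\cV_n$ the periodic set, its hyperbolic points, and the compact arcs of their invariant manifolds all vary continuously in the $C^1$ topology (continuity of stable/unstable manifolds for hyperbolic sets), so a transverse intersection on a compact arc, being an open condition, persists; the empty-intersection alternative is likewise open by compactness of the relevant arcs and the separation $s_n(\Gamma)>0$. Combining openness and denseness finishes the proof.

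The main obstacle I expect is the one already flagged in the introduction and in \S\ref{transver}: billiard perturbations are not local in phase space, since perturbing $\Gamma$ near $s_0$ alters $F$ globally. The delicate point is therefore to select the support $I$ of the normal perturbation so that it moves the chosen unstable arc $L$ while fixing both hyperbolic orbits and their local stable/unstable manifolds simultaneously — this is exactly what property (b) of $\cV_n$ (no shared reflection points) is designed to guarantee, and the separation bound $s_n(\Gamma)>0$ is what keeps the supports disjoint from the orbits. Verifying that the induced motion of $L$ is genuinely transverse (nonzero velocity in the direction normal to $W^s_{\pm}(p)$) is the technical heart, and it will rely on the curvature/focusing-time computations of \S\ref{prelim} relating a normal perturbation to the shift $\cB\mapsto\cB-\tfrac{2\epsilon}{\sin\theta}$ of the returning wave-front.
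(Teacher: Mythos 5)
Your framing (working inside $\cU_n\cap\cV_n$, localizing the perturbation away from both periodic orbits, invoking robustness of transverse intersections) is consistent with the paper, but the core of your argument has a genuine gap, in fact two. First, your openness claim for the empty-intersection alternative fails: the branches $W^s_{\alpha}(p)$ and $W^u_{\beta}(q)$ are noncompact, and disjointness of the \emph{full} branches is not detected on compact arcs --- two disjoint branches can accumulate on one another at infinity, after which an arbitrarily small perturbation creates intersections, so ``no intersection'' is not an open condition; the separation bound $s_n(\Gamma)>0$ concerns reflection points of periodic orbits and says nothing about the invariant manifolds. The paper instead builds openness into a dichotomy: either non-intersection persists on a whole neighborhood of $\Gamma$ (open by definition), or $\Gamma$ is a limit of tables having an intersection, and then one only needs to convert that single intersection into a transverse one. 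Second, your parametric-transversality scheme controls only a fixed compact arc $L$: for generic parameters you get that every intersection of $L$ with $W^s$ is transverse, but if $L$ happens to miss $W^s$ you learn nothing about tangential intersections farther along the branches, so membership in $\cW_n$ does not follow; exhausting by longer and longer arcs yields at best a residual set, not the claimed open and dense one. Moreover your ``target'' $W^s_{\pm}(p)$ is itself moved by the perturbation --- only the local manifolds are frozen, while the global stable branch generally re-enters the support $I$ --- so you would be asking for transversality to a moving submanifold, and the sweeping argument does not apply as stated.

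The paper's mechanism is different and avoids all of this. Given a nearby table with a nontransverse intersection $x$, it takes an iterate of $x$ whose footpoint $s_0=\pi_1(x)$ is visited exactly once by $\cO(x)$ and performs a \emph{normal} perturbation at $s_0$: this fixes $\Gamma(s_0)$ and $\dot\Gamma(s_0)$ and changes only $\kappa(s_0)$, hence preserves the entire orbit of $x$ --- so $x$ remains a heteroclinic point --- while the Mirror Formula shifts the curvature of the returning wavefront by $-2\epsilon/\sin\theta_0$, rotating the tangent of one manifold relative to the other at $x$ and breaking the tangency (Lemma \ref{donnay}); one also needs the observation that, in the absence of conjugate points, the two wavefronts cannot both focus at $x$. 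Note in particular that a normal perturbation does not translate your arc $L$ at all (point and tangent at $s_0$ are fixed), so it cannot ``sweep out a neighborhood'' as your evaluation map requires: the curvature shift changes slopes, not positions, and changing the slope at a fixed intersection point is precisely the role it plays in the paper. You correctly identified the Mirror Formula computation as the technical heart, but it must be deployed to break a tangency at a persistent intersection point, not to generate a transversality-theoretic family of translates.
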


To prove this result, we need the following perturbation result of
Donnay \cite{Don}.
\begin{lem}\label{donnay}
Let $\Gamma\in\Upsilon^r(S^2,g)$. For each $i=\pm1$, let $x_i=F^{i}x_0$,
$c_{i}:(-\epsilon,\epsilon)\to M$ be a smooth curve with
$c_{i}(0)=x_{i}$ such that $Fc_{-1}$
does not focus at $s_0= p_1(x_0)$,
and is tangent to $F^{-1}c_1$ at $x_0$.
Then there is a $C^r$ small perturbation of $\Gamma$ at the base
point $s_0$ such that
$\hat Fc_{-1}$ and $\hat F^{-1}c_1$ are transverse at $x_0$.
\end{lem}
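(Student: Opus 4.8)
The plan is to take $\hat\Gamma=\Gamma_\epsilon$ to be a \emph{normal perturbation} at the single base point $s_0=\pi_1(x_0)$, raising the geodesic curvature there from $\kappa(s_0)$ to $\kappa(s_0)+\epsilon$ while keeping $\Gamma(s_0)$ and $\dot\Gamma(s_0)$ fixed, and supported on a short arc disjoint from the neighboring reflection points $s_{\pm1}=\pi_1(x_{\pm1})$. Since this perturbation fixes the point $\Gamma(s_0)$ together with its tangent line, the incoming geodesic from $s_{-1}$ and the outgoing geodesic toward $s_1$ are unchanged; hence the three points $x_{-1},x_0,x_1$ and the two curves $c_{-1},c_1$ are untouched, and only the derivative of the billiard map across the reflection at $s_0$ is altered. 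I would then measure the tangent direction of each competing curve at $x_0$ by its slope $m=\cB\sin\theta_0+\kappa(s_0)$, where $\theta_0=\pi_2(x_0)$ and $\cB$ is the intrinsic geodesic curvature of the associated wavefront.

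Both $Fc_{-1}$ and $F^{-1}c_1$ are post-reflection wavefronts at $s_0$, and the tangency hypothesis gives them a common slope $m_0$, which is finite because $Fc_{-1}$ is assumed not to focus at $s_0$ (so $\cB\neq\infty$). After the perturbation each slope carries the same explicit term $\kappa(s_0)+\epsilon$, so these contributions cancel when the two curves are compared; the decisive difference lies in the intrinsic curvature $\cB$. The forward image $\hat Fc_{-1}$ physically reflects at $s_0$ against the altered boundary, so by the Mirror Formula its post-reflection curvature jumps to $\cB^+_{\mathrm{new}}=\cB^--2(\kappa(s_0)+\epsilon)/\sin\theta_0=\cB^+-2\epsilon/\sin\theta_0$, its incoming value $\cB^-$ being built along the untouched flight from $s_{-1}$; by contrast $\hat F^{-1}c_1$ is determined entirely by its future evolution toward the unperturbed point $s_1$, which never meets $\kappa(s_0)$, so its intrinsic curvature is unchanged. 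Subtracting, the explicit terms cancel and only the curvature jump survives,
\[
m(\hat Fc_{-1})-m(\hat F^{-1}c_1)=\left(\cB^+_{\mathrm{new}}-\cB^+\right)\sin\theta_0=-2\epsilon,
\]
which is nonzero for every $\epsilon\neq0$. Thus $\hat Fc_{-1}$ and $\hat F^{-1}c_1$ have distinct tangent lines at $x_0$, i.e.\ they are transverse there, and taking $\epsilon$ small keeps $\hat\Gamma$ inside any prescribed $C^r$-neighborhood of $\Gamma$.

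The step I expect to be the main obstacle is justifying this asymmetry rigorously: that the forward image's intrinsic wavefront curvature is altered by the Mirror Formula while the backward image's is not. This amounts to careful bookkeeping of which reflections each wavefront actually undergoes under the perturbed flow---the reflection at $s_0$ is the final event in constructing $\hat Fc_{-1}$, but is never invoked in constructing $\hat F^{-1}c_1$, whose backward propagation reflects only at the untouched $s_1$. The non-focusing hypothesis enters precisely here: it keeps $m_0$ finite, so that the intrinsic-curvature jump genuinely separates the two tangent lines rather than leaving them both equal to the vertical direction $\langle\pa_\theta\rangle$. A minor point to check is that the support of the normal perturbation can be chosen disjoint from $s_{\pm1}$, which holds as soon as there is no immediate self-reflection at $s_0$.
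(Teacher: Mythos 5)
Your proof is correct and takes essentially the same route as the paper's: a normal perturbation at $s_0$ fixing $\Gamma(s_0)$ and $\dot\Gamma(s_0)$ (hence fixing $x_{\pm1}$, $x_0$ and both curves), the Mirror Formula shifting only the post-reflection curvature of the forward image by $-2\epsilon/\sin\theta_0$, the backward pullback unaffected since it never invokes the reflection at $s_0$, and the nonfocusing hypothesis keeping the common slope finite so the shift genuinely separates the tangent lines. The only (immaterial) discrepancy is bookkeeping of the explicit $\kappa$-term in $m=\cB\sin\theta+\kappa$: your slope difference is $-2\epsilon$ where the paper records $\epsilon$, but both are nonzero multiples of $\epsilon$, which is all transversality requires.
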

\begin{proof}
We consider the normal perturbations $\hat \Gamma$
with $\hat \Gamma(s_0)=\Gamma(s_0)$,
$\hat \Gamma'(s_0)=\Gamma'(s_0)$
and $\hat \kappa(s_0)=\kappa(s_0)+\epsilon$.
If the perturbation is localized at $s_0= p_1(x_0)$,
then one always has  $x_i=\hat F^{i}x_0$, and hence
$x_0\in \hat Fc_{-1}\cap\hat F^{-1}c_1$.

The nonfocusing assumption of $Fc_{-1}$  means that
$\cB^-(DF\dot c_{-1}(0))\neq\infty$,
and tangency assumption means that
$\cB^-(DF\dot c_{-1}(0))=\cB^-(DF^{-1}\dot c_1(0))$.
Suppose $\hat \kappa(s_0)\neq \kappa(s_0)$ after the perturbation.
First note that $\cB^-(D\hat F\dot c_{-1}(0))$
 and $\cB^+(D\hat F^{-1}\dot c_{1}(0))$ stay unchanged,
since these quantities do not depend on the reflection with $\hat \Gamma(s_0)$.
Then according to the Mirror Formula,
\[\cB^+(D\hat F\dot c_{-1}(0))
=\cB^-(DF\dot c_{-1}(0))-\frac{2\hat \kappa(s_0)}{\sin\theta_0}
=\cB^+(DF\dot c_{-1}(0))-\frac{2\epsilon}{\sin\theta_0}.\]
Therefore $m(D\hat F\dot c_{-1}(0))=m(D\hat F^{-1}\dot c_{1}(0))-\epsilon$,
and the intersection is transverse at $x_0$.
\end{proof}

\begin{proof}[Proof of Proposition \ref{hetero}]
We will show that $\cW_n$ contains
an open and dense subset of $\cV_n$.
Pick a small open set $\cV\subset\cV_n$ on which
$|P_n(\cdot)|$ is constant and $P_n(\cdot)$ is continuous.
It suffices to show that $\cW_n$ contains
an open and dense subset in every such $\cV$.

We enumerate $P_n(\Gamma)$ as $\{y_i(\Gamma):1\le i\le I\}$.
Given $1\le i,j\le I$, $\alpha,\beta\in\{+,-\}$,
let $\cW_{ij\alpha\beta}$ be those $\Gamma\in\cW$
such that either $W^s_{\alpha}(y_i)\cap W^u_{\beta}(y_j)=\emptyset$,
or $W^s_{\alpha}(y_i)\pitchfork_x W^u_{\beta}(y_j)$
for some $x\in W^s_{\alpha}(y_i)\cap W^u_{\beta}(y_j)$.
It suffices to show each $\cW_{ij\alpha\beta}$ contains an open and dense
subset in $\cV$,
since $\ds \bigcap\{\cW_{ij\alpha\beta}:1\le i,j\le I, \alpha,\beta\in\{+,-\}\}$
is contained in $\cW_n$.
In the following we will fix $ij$ and $\alpha\beta$.

Note that there is a simple dichotomy
for $\Gamma\in\cV$:
\begin{enumerate}
\item either there exist $\Gamma_k\to \Gamma$ such that
$W^s_{\alpha}(y_i(k))$ and $W^u_{\beta}(y_j(k))$ intersect at
some point, say $x_k$.

\item or there is a smaller neighborhood of $\Gamma$
among which $W^s_{\alpha}(y_i)$ and $W^u_{\beta}(y_j)$ do not intersect.
\end{enumerate}
It suffices to show the intersections in the first alternative can be perturbed
to be transverse. From now on we fix $\Gamma_k$
such that $W^s_{\alpha}(y_i(k))$ and $W^u_{\beta}(y_j(k))$
intersect non-transversely at $x_k$,
and drop the dependence on $k$ safely.

Note that the minimal separation $s_n(\Gamma)>0$,
and the orbit $F^kx$ approximate $y_i$ (or $y_j$) exponentially fast
as $k\to +\infty$ (or $k\to-\infty$, respectively).
By taking some iterates of $x$ 
if necessary, we can assume
that there exists an open interval $I\subset \Gamma$ of $s_0= p_1(x)$
such that all other iterates of $x$ stay out of $I$.
Now we consider the wavefront at $x$ generated
by the stable and unstable branches.
Note that there is no conjugate point in $Q$.
So no wavefront can focus at $x$ and $fx$ simultaneously.
Without loss of generality we assume they do not focus at $x$.
In particular, it implies the stable and unstable branches are not
tangent to the direction $\langle\pa_\theta\rangle$ and hence project down to
an open interval on $\Gamma$, say $I$.
Then we can make a very small perturbation of $\Gamma$
supported on $I$, such that
$W^s_{\alpha}(y_i)$ and $W^u_{\beta}(y_j)$
intersect transversely at $x$ (see Lemma \ref{donnay}).
Note that transverse intersection, once created, is robust under perturbations.
Therefore $\cW_{ij\alpha\beta}$ contains an open and dense
subset in $\cV$. This completes the proof.
\end{proof}

Let $\cR^r_{KS}=\bigcap_{n\ge 2}\cW_n$, which contains a residual
subset of  $\Upsilon^r(S^2,g)$.
\begin{them}\label{KS2}
There is a residual subset  $\cR^r_{KS}$ of $\Upsilon^r(S^2,g)$,
such that for each $\Gamma\in\cR^r_{KS}$,
\begin{enumerate}
\item every periodic point of $F$ is elementary;

\item for any two hyperbolic branches
$W^s_{\alpha}(p)$ and $W^u_{\beta}(q)$,
\begin{enumerate}
\item[(2a)] either $W^s_{\alpha}(p)\cap W^u_{\beta}(p)=\emptyset$,
\item[(2b)] or $W^s_{\alpha}(p)\pitchfork_x W^u_{\beta}(q)$
for some $x\in W^s_{\alpha}(p)\cap W^u_{\beta}(q)$.
\end{enumerate}
\end{enumerate}
\end{them}
The case $r=\infty$ can be obtained in the same way as we did for Theorem
\ref{denseinfty}.
\begin{remark}
The properties of the above theorem resemble the Kupka--Smale properties for convex billiards.
However, the above theorem does not claim that
$W^s_{\alpha}(p)$ and $W^u_{\beta}(q)$
are transverse (see \cite{CP02}), neither that
$W^s_{\alpha}(p)$ and $W^u_{\beta}(q)$ have nontrivial intersection.
In general, $W^s_{\alpha}(p)$ and $W^u_{\beta}(q)$
may be separated by
some (KAM) invariant curves, and this separation is persistent under perturbations.
In next section we will study the case when $p=q$ and
prove the generic existence
of homoclinic intersections.
\end{remark}

\section{Homoclinic intersections for hyperbolic periodic points}\label{homoprime}

In this section we study the existence of homoclinic intersections
of hyperbolic periodic points of convex billiards on $(S^2,g)$.
Our main result is the following.
\begin{pro}\label{homo1n}
There is an open and dense subset
$\cX_n\subset  \Upsilon^r(S^2,g)$
such that for each $\Gamma\in \cX_n$,
there exist transverse homoclinic intersections for each
hyperbolic periodic point $p\in P_n(\Gamma)$.
\end{pro}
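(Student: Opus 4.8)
The plan is to prove Proposition \ref{homo1n} by combining the heteroclinic transversality result of the previous section (Theorem \ref{KS2}) with Mather's prime-end characterization, which forces a hyperbolic periodic point to have genuine homoclinic crossings once its invariant branches cannot ``escape'' past an elliptic island. By the previous section, I may restrict attention to the residual (indeed open and dense) set where every periodic point in $P_n(\Gamma)$ is elementary, $P_n$ varies continuously, distinct orbits share no reflection points, and any two hyperbolic branches $W^s_\alpha(p)$, $W^u_\beta(q)$ either are disjoint or meet transversally. The content that remains is to upgrade the alternative (2a)/(2b) of Theorem \ref{KS2}, in the diagonal case $p=q$, to the assertion that the disjoint alternative cannot persist: a hyperbolic periodic point of an area-preserving twist map of the annulus must in fact possess a transverse homoclinic point.

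First I would fix $\Gamma$ in the open and dense set supplied by Theorem \ref{KS2} and a hyperbolic point $p\in P_n(\Gamma)$, and pass to the first-return map on a central annulus $M_n$ (Lemma \ref{Mn}), so that $F^n$ is an area-preserving diffeomorphism of an open annulus for which $p$ is a hyperbolic fixed point with four branches $W^{s,u}_\pm(p)$. The strategy is to apply Mather's theorem, as developed by Franks and Le Calvez \cite{Mat1,FrLC}, on the prime-end compactification of the complementary domains of $W^s(p)\cup W^u(p)$: either a branch accumulates on itself, producing a homoclinic intersection, or it escapes to an end of the annulus or limits on another invariant set. To rule out the ``no homoclinic intersection'' branch I must exclude the possibility that the unstable branch limits onto a distinct invariant circle or onto another periodic orbit. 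This is where the hypothesis of Mather's result enters: each elliptic fixed point, if present, must be Moser stable, so that a branch of $W^u_\beta(p)$ cannot be trapped by an elliptic island and is instead forced by the prime-end rotation-number argument to return and cross $W^s_\alpha(p)$. I would invoke the Moser stability of elliptic periodic points established via Herman's Diophantine-invariant-curve result (announced in the introduction) to verify exactly this hypothesis on a dense subset; combined with Theorem \ref{KS2}(1), this places us in the situation Mather handles.

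Concretely, the key steps in order are: (i) reduce to $F^n$ on $M_n$ and to the residual/open-dense set of Theorem \ref{KS2}, intersected with the dense set on which every elliptic periodic point is Moser stable; (ii) for each such $\Gamma$ and each hyperbolic $p$, show that if $W^s_\alpha(p)\cap W^u_\beta(p)=\emptyset$ for all choices of branches, then the prime-end rotation number of the boundary of a complementary component is rational, which by the Franks--Le Calvez machinery yields a periodic prime-end, hence either a homoclinic point or an extra periodic orbit/invariant circle separating the branches; (iii) eliminate the latter using Moser stability, which guarantees the only obstructions are invariant circles that the Diophantine-curve argument shows do not block the branch, thereby forcing $W^s_\alpha(p)\pitchfork W^u_\beta(p)\neq\emptyset$; (iv) conclude openness, since a transverse homoclinic intersection, once created, is robust, and density, by perturbing within the open and dense set where the nondegeneracy and transversality hypotheses hold. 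Finally, setting $\cX_n$ to be this open and dense set completes the proof.

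The hard part will be step (iii): verifying that Mather's hypotheses are genuinely met for convex billiards, i.e.\ that the elliptic periodic points arising here are Moser stable and that no persistent KAM circle separates the two branches of the \emph{same} hyperbolic orbit. Unlike the heteroclinic case (Theorem \ref{KS2}), where a separating invariant curve is a legitimate and persistent obstruction, for the homoclinic (diagonal) case one expects the prime-end argument to close the branches onto each other; making this rigorous requires the nonlinear stability input from Herman's theorem, which is precisely the ``new ingredient'' flagged in the abstract. I would therefore expect the bulk of the technical work to lie in translating the Diophantine-invariant-curve conclusion into the Moser-stability statement needed to apply \cite{Mat1,FrLC}, and in checking the twist/area-preservation hypotheses of those results for the first-return billiard map on $M_n$.
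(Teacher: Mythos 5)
Your high-level architecture matches the paper's: restrict to the open and dense set from Theorem \ref{KS2}, use Herman's Diophantine result (Propositions \ref{herman} and \ref{dense}) to make all elliptic periodic points Moser stable on a dense subset, feed this into the Mather/Franks--Le Calvez prime-end theory, and get openness for free from robustness of transverse intersections. But there is a genuine gap at the heart of your steps (ii)--(iii): the prime-end machinery does \emph{not} by itself force a branch to ``return and cross'' the stable branch. What it actually delivers (this is the dichotomy from \cite{XZ} that the paper quotes) is: for each branch $L$ fixed by the map, either $\omega(L)\supset L$ (the branch is recurrent, i.e.\ accumulates on \emph{itself}) or $\omega(L)=\{q\}$ is a saddle connection to a hyperbolic point; Moser stability of elliptic points is used precisely to show the limiting prime-end fixed point cannot sit over an elliptic point. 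Saddle connections are then excluded because $\Gamma\in\cW_{2n}$. But recurrence of $L$ alone does not imply $W^u\cap W^s\neq\emptyset$, and your assertion that disjointness of all branches yields a rational prime-end rotation number and ``hence a homoclinic point'' skips exactly the step the paper has to supply by other means. The paper closes this gap with two ingredients absent from your proposal: first, Corollary \ref{mather} (once no branch is a saddle connection, all four branches of $p$ have the \emph{same closure}, so the stable branch $K$ accumulates on the unstable branch $L$ -- this also disposes of your worry about a KAM circle separating branches of the same orbit, with no separate ``Diophantine circles don't block'' argument needed); second, a purely topological crossing argument in the style of Pixton/Robinson/Xia--Zhang: close $L$ and $K$ into simple closed curves $C$ and $\hat C$ through a small sector $S_\epsilon$ at $p$, observe they cross at $p$, and use that the algebraic intersection number of any two closed curves on the annulus $M$ is zero to force a second crossing, which must be a homoclinic intersection of $L(p,q)$ with $K(p,\hat q)$; transversality of \emph{some} homoclinic point then follows from membership in $\cW_{2n}$, not from the crossing itself.

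Two smaller corrections. You misidentify where the work lies: your ``hard part (iii)'' (eliminating separating circles) is handled automatically by the same-closure corollary, whereas the real remaining content is the intersection-number argument above, and notably the paper emphasizes that \emph{no perturbation at all} is needed at this stage -- all perturbations were spent earlier making rotation numbers Diophantine (Proposition \ref{dense}). Also, your reduction is slightly off: there is no first-return map to $M_n$ ($F^n$ is already a diffeomorphism of the open annulus $M$; $M_n$ only serves compactness of $P_n$), and you miss the technical point that $F^n$ may \emph{swap} the two branches of $W^s(p)$ and $W^u(p)$, which is why the paper works with $f=F^{2n}$ and the dense set $\cD_{2n}\subset\cW_{2n}$ rather than with $F^n$ and $\cD_n$.
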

It suffices to show such $\cX_n$ is open and dense in $\cW_n$
(see Proposition \ref{hetero} for the set $\cW_n$).
Note that $P_n(\Gamma)$ is finite and depends continuously for $\Gamma\in\cV_n$,
and the existence of transverse intersections is an open condition.
Then $\cX_n$ is automatically open in $\cW_n$.
So it suffices to show the $C^r$ denseness of $\cX_n$ in $\cW_n$.
\begin{remark}
A simple fact that we will use repeatedly in this section is that
$\Upsilon^\infty(S^2,g)$ is $C^r$ dense in $\Upsilon^r(S^2,g)$
for any $r\ge 2$.
For example, the perturbations constructed
in Sect.~\ref{transver} are {\it always $C^\infty$},
although they are {\it only $C^r$-small}.
Therefore, we only need to show that the $C^\infty$
smooth ones in $\cX_n$ are already $C^r$  dense  in $\cW_n$.
So in the following all the convex tables will be assumed to be $C^\infty$,
and the perturbations will always be $C^\infty$ smooth
although they are only $C^r$ small in topology.
\end{remark}
Before giving the proof, we need some preparations
to cut off the connections between  the elliptic periodic points
and the hyperbolic periodic points of $F$.

\subsection{Nonlinear stability of elliptic periodic points}
Let $f\in\mathrm{Diff}^\infty_\mu(M)$ and $p$ be a fixed point of $f$.
An elliptic fixed point is also said to be {\it linearly stable}.
Then a fixed point $p$ is said to be (nonlinearly) {\it stable},
if there are nesting closed disks
$\{D_n\}$ with $p\in D_{n+1}\subset D^o_n$ such that
$\bigcap_{n\ge 1} D_n =\{p\}$ and $f|_{\partial D_n}$ is transitive.
Note that stable fixed points are isolated from the dynamics,
and any invariant rays either coincides with some of those $\pa D_n$,
or are disjoint from $\pa D_n$.

Moser proved in \cite{Mos} his Twist Map Theorem, which says that
an elliptic fixed point $p$ is stable, if there exists $n\ge 1$
such that the eigenvalue of $D_pf$ satisfies
$\lambda_p^{i}\neq 1$ for each $1\le i\le q$,
and $a_j(f^n,p)\neq 0$ for some $1\le j \le [n/2]-1$,
where $a_k, k\ge 1$ are the coefficients of Birkhoff normal form around $p$.
In this case, $p$ is also said to be Moser stable.
By perturbing the Birkhoff normal form
and then applying Moser twist map theorem,
Robinson proved in \cite{Rob70} that generically,
each elliptic periodic point is Moser stable.

It is expected that a small perturbation of the billiard table
will change the coefficients of Birkhoff normal form around an elliptic periodic point,
and turn that point into nonlinearly stable one.
However, it is quite difficult (if not impossible) to compute the Birkhoff normal form
for convex billiard dynamics on a convex sphere
with non-constant curvature,
since we do not know too much about the
explicit form around an elliptic periodic point,
and the dependence of $a_k(f^n,p)$ is quite involved
 (see \cite{DOP1,BuGr} for the planar case).

In the following we will take a different (simpler) approach
to improve the stability of an elliptic periodic points.
For an elliptic periodic point $p$, the rotation number $\rho$ of $p$ is
given by the rotation number of projective action
$[D_pF^n]$ on the projective space $\bP^1$.
Then $p$ is said to have Diophantine rotation number, if $\rho$
is Diophantine. That is, there exists positive numbers $c,\tau$
such that
\begin{equation}
\left|\rho-\frac{m}{n}\right|\ge \frac{c}{|n|^{2+\tau}},
\text{ for all rational numbers }\frac{m}{n}.
\end{equation}

The following is the so called Herman's {\it Last
Geometric Theorem}, which states that  an elliptic fixed point with Diophantine
rotation number is nonlinearly stable \cite{Yoc}.
See \cite{FK} for the history and a complete proof of Herman's LGT.
\begin{pro}\label{herman}
Let $f\in\mathrm{Diff}^\infty_\mu(M)$ and $p$ be an elliptic fixed point of $f$
with rotation number $\rho$.
If $\rho$ is Diophantine, then $p$ is stable.
\end{pro}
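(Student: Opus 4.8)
The plan is to deduce the nested-disk characterization of stability directly from Herman's theorem on the accumulation of Diophantine invariant curves, which is precisely the tool we are given. First I would record the exact form of the result to be used: since $f\in\mathrm{Diff}^\infty_\mu(M)$ and the linear rotation number $\rho=\rho([D_pf])$ of the elliptic fixed point $p$ is Diophantine, there exists a sequence of $f$-invariant $C^\infty$ Jordan curves $\gamma_k$ encircling $p$ with $\gamma_k\to p$ in the Hausdorff topology, such that on each $\gamma_k$ the restriction $f|_{\gamma_k}$ is $C^\infty$-conjugate to a Diophantine rotation $R_{\alpha_k}$. The essential point, and the reason we invoke Herman's theorem rather than the Moser twist theorem, is that \emph{no} twist or nondegeneracy condition on the Birkhoff normal form of $f$ at $p$ is needed: for Diophantine $\rho$ these accumulating invariant curves exist unconditionally. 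This is exactly what makes the approach viable here, since for billiards on a sphere with non-constant curvature the Birkhoff coefficients $a_k(f^n,p)$ are intractable to compute.

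Given these curves, the remainder is topological bookkeeping. Since $f|_{\gamma_k}$ is conjugate to an irrational rotation it is minimal, hence in particular transitive, which supplies the transitivity requirement in the definition of stability. Next, let $D_k$ denote the closed topological disk bounded by $\gamma_k$ via the Jordan curve theorem. Because $f$ is a homeomorphism with $f(p)=p$ and $f(\gamma_k)=\gamma_k$, and $\gamma_k$ separates the bounded component $D_k^o$ from the unbounded one, the image $f(D_k^o)$ is one of the two complementary components of $\gamma_k$; as it contains $f(p)=p\in D_k^o$, we conclude $f(D_k)=D_k$, so each disk is $f$-invariant. Passing to a subsequence I would then arrange the nesting $p\in D_{k+1}\subset D_k^o$, which is possible precisely because the $\gamma_k$ accumulate on $p$, and the same accumulation yields $\bigcap_k D_k=\{p\}$.

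Finally I would assemble the pieces: the family $\{D_k\}$ consists of nested closed disks shrinking to $p$, each invariant under $f$, with $f|_{\partial D_k}$ transitive. This is exactly the definition of (nonlinear) stability of the elliptic fixed point $p$, completing the proof. The only genuine content lies in Herman's theorem, cited from \cite{Yoc}; I expect the main, and essentially the only, obstacle to be the correct invocation of that theorem and the conversion of the existence of accumulating invariant curves into the nested-disk formulation, in particular the verification that the enclosed disks, and not merely their boundary circles, are invariant under $f$.
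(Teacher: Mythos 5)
Your proposal is correct and takes essentially the same approach as the paper: both treat Herman's theorem on Diophantine invariant curves (cited from \cite{Yoc}), which needs no twist or Birkhoff-normal-form nondegeneracy, as the sole substantive input, and then pass to the nested-disk definition of stability. The paper merely adds a sketch of why Herman's theorem holds (introducing an artificial twist parameter and applying KAM to the resulting family, as in \cite{Xia92}), while your topological bookkeeping --- invariance of the enclosed disks, nesting after passing to a subsequence, and transitivity on the boundary circles --- is the routine part the paper leaves implicit.
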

See \cite{HX13} for some applications of Herman's LGT
 to the study of the stability of Lagrangian equilibrium solutions of
circular restricted three body problems.

\begin{pro}\label{dense}
There is a $C^r$-dense subset $\cD_n\subset \cW_n\cap \Upsilon^\infty(S^2,g)$,
such that for each $\Gamma\in \cD_n$,
all elliptic periodic points in $P_n(\Gamma)$ are stable.
\end{pro}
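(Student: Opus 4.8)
The plan is to combine Proposition \ref{herman} (Herman's result) with the perturbation machinery already developed for elliptic periodic points, namely Proposition \ref{dege} and Lemma \ref{Ukn}. The key observation is that Proposition \ref{herman} reduces the task of establishing nonlinear stability of an elliptic periodic point to the purely arithmetic task of arranging that its rotation number be \emph{Diophantine}. Since Diophantine numbers form a set of full Lebesgue measure in $\bR$, and since by Proposition \ref{dege}(2) the rotation number of an elliptic periodic point can be shifted continuously by a normal perturbation of $\Gamma$, one expects to be able to push the rotation number into the Diophantine set by an arbitrarily small perturbation.

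First I would start from a table $\Gamma\in\cW_n$. Since $\cW_n$ contains an open and dense subset and $P_n(\Gamma)$ is finite and varies continuously there, enumerate the finitely many periodic points $p_1,\dots,p_I\in P_n(\Gamma)$. We only need to worry about those $p_i$ that are elliptic; the hyperbolic ones are automatically (structurally) stable and require no action. For each elliptic $p_i$, by Theorem \ref{KS1} (or by working within $\cR^r$) its rotation number $\rho(p_i)$ is already irrational, but irrationality alone does not guarantee the Diophantine condition. The plan is then to perturb one reflection point on the orbit of $p_i$, exactly as in the proof of Lemma \ref{Ukn}, using a normal perturbation $\Gamma_{\epsilon,p_i}$ of the form supplied by Proposition \ref{dege}. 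By Proposition \ref{dege}(2) the rotation number $\rho_\epsilon(p_i)$ depends continuously (indeed monotonically, as long as the relevant off-diagonal entry $b_{p_i}$ is nonzero) on the perturbation parameter $\epsilon$, so as $\epsilon$ ranges over a small interval, $\rho_\epsilon(p_i)$ sweeps out a nondegenerate subinterval of rotation numbers. Since the Diophantine numbers have full measure, for almost every such $\epsilon$ the new rotation number is Diophantine, and in particular we may choose $\epsilon(p_i)$ arbitrarily small with $\rho_{\epsilon(p_i)}(p_i)$ Diophantine.

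The remaining point is that these perturbations must be carried out one periodic point at a time without destroying what was achieved for the others, and then the stability must be read off from Proposition \ref{herman}. Because $P_n(\Gamma)$ is finite and (on the open set where $|P_n(\cdot)|$ is locally constant) depends continuously on $\Gamma$, I would perform the perturbations successively: after fixing $p_1$ so that $\rho(p_1)$ is Diophantine, the other periodic points persist and their rotation numbers move only slightly, so I can then treat $p_2$, and so on, in finitely many steps. Since the condition of having Diophantine rotation number is an open condition on the rotation number itself (a Diophantine number has a whole neighborhood's worth of nearby rationals avoided), a Diophantine rotation number of $p_i$ survives the subsequent small perturbations made to fix $p_{i+1},\dots,p_I$; one must simply choose each successive perturbation small enough relative to the Diophantine constants $(c,\tau)$ already secured. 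After the finitely many steps, the resulting table $\hat\Gamma$ is $C^\infty$-close to $\Gamma$, lies in $\cW_n$, and has the property that every elliptic periodic point in $P_n(\hat\Gamma)$ has Diophantine rotation number; by Proposition \ref{herman} each such point is then stable. Letting $\Gamma$ range over $\cW_n$ gives the desired dense subset $\cD_n$.

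The main obstacle I anticipate is twofold. The first subtlety is making sure that the normal perturbation used to adjust $\rho(p_i)$ genuinely moves the rotation number, i.e. that the off-diagonal entry $b_{p_i}$ is nonzero; this is precisely the content of the case analysis in the proof of Proposition \ref{dege}, so it has already been handled there, but one must invoke it carefully for \emph{elliptic} (as opposed to degenerate) points and verify the map $\epsilon\mapsto\rho_\epsilon(p_i)$ is locally nonconstant so that its image has positive measure and thus meets the Diophantine set. The second and more delicate point is the bookkeeping of the successive perturbations: I must verify that the Diophantine property, with its quantitative constants, is robust enough that fixing later periodic points does not spoil earlier ones, and that distinct periodic orbits can be adjusted independently. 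Here the zero-defect and no-common-reflection-point properties guaranteed by membership in $\cV_n\subset\cW_n$ are essential, since they ensure that a normal perturbation localized at one reflection point of $\cO(p_i)$ does not simultaneously lie on the orbit of another $p_j$, so the adjustments are genuinely decoupled.
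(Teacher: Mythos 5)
Your overall strategy is exactly the paper's: use Proposition \ref{dege} to move the rotation number of each elliptic orbit continuously through a nondegenerate interval, pick a Diophantine value there (a full measure set), treat the finitely many elliptic orbits one at a time on a neighborhood where $P_n(\cdot)$ is finite, constant in cardinality and continuous, and conclude stability from Proposition \ref{herman}. However, the step by which you protect the already-treated orbits is wrong as stated: having a Diophantine rotation number is \emph{not} an open condition. For fixed constants $(c,\tau)$ the set $\{\rho : |\rho-p/q|\ge c/|q|^{2+\tau} \text{ for all } p/q\}$ is closed with empty interior, and the full set of Diophantine numbers, while of full measure, has a dense complement (rationals and Liouville numbers). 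So a Diophantine number has no neighborhood of Diophantine numbers, and no choice of the later perturbations ``small enough relative to the Diophantine constants $(c,\tau)$ already secured'' can guarantee that $\rho(p_i)$ remains Diophantine if those perturbations move it at all. Your parenthetical justification conflates the quantitative avoidance of rationals \emph{by} a Diophantine number with a (nonexistent) stability of the property under perturbation of the number itself.

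The argument is saved by the observation you relegate to your closing sentence, which is in fact the crux of the paper's proof: by the zero-defect and no-common-reflection-point properties, the perturbation adjusting $\rho(p_j)$ can be supported in a small arc around a single reflection point of $\cO(p_j)$, disjoint from all reflection points of every other orbit in $P_n(\Gamma)$. Since the metric on $S^2$ is untouched and the boundary is unchanged near the reflection points of $\cO(p_i)$, the orbit $\cO(p_i)$ remains an orbit of the perturbed billiard map, and $D_{p_i}F^n$ --- determined by the curvature of $\Gamma$ at the reflection points of $\cO(p_i)$ and the Jacobi fields along its geodesic segments --- is literally unchanged. Hence the rotation numbers already secured are preserved \emph{exactly}, not approximately, and no robustness of the Diophantine condition is needed; for the same reason the untreated orbits do not ``move slightly'' --- they are frozen, so the successive steps are completely decoupled, which is how the paper concludes $\hat F=F$ on $P_n(\hat\Gamma)=P_n(\Gamma)$ after at most $|P_n(\Gamma)|$ steps. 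Two minor remarks: the appeal to Theorem \ref{KS1} for irrationality of $\rho(p_i)$ is unnecessary (Proposition \ref{dege} applies to any non-hyperbolic periodic point with zero defect, which all points of $P_n(\Gamma)$ have here), and the concern about $b_{p_i}\neq 0$ is indeed already disposed of inside the proof of Proposition \ref{dege}, as you note. With the exact-decoupling substitution replacing the openness claim, your proof coincides with the paper's.
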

\begin{proof}
Given a convex domain $\Gamma\in\cW_n\cap \Upsilon^\infty(S^2,g)$,
pick a sufficiently small
neighborhood $\cU\subset \cW_n$ of $\Gamma$ such that $P_n:\hat
\Gamma\in\cU\mapsto P_n(\hat\Gamma)$ has the same (finite) cardinality and
varies continuously. Note that each periodic point $p\in P_n(\Gamma)$ has
zero defect. We make a $C^r$-small and
$C^\infty$-smooth perturbation of $\Gamma$ around one point $p$
from each elliptic periodic orbit $\cO(p)$ in $P_n(\Gamma)$, say the
resulting domain $\hat\Gamma(\epsilon)$, such that the rotation number
$\rho_\epsilon$ of $p$ respecting the billiard map on $\hat\Gamma(\epsilon)$
is different from the initial rotation number, see Proposition
\ref{dege}. Note that  the set of Diophantine numbers has full measure on
the interval $(\rho,\rho_\epsilon)$ Picking a smaller size if necessary, we
can assume $\rho_\epsilon$ is already Diophantine.

Any two periodic orbits in $P_n(\Gamma)$ have no common reflection points.
So the above perturbation can be localized at one reflection point
and they have disjoint supports on $\Gamma$.
In particular the Diophantine rotation numbers
of the already perturbed ones
are preserved by the subsequent perturbations.

After a finite steps (at most $|P_n(\Gamma)|$) $C^r$-small and
$C^\infty$-smooth of perturbations,
we arrive at some $\hat\Gamma\in \cU\cap \Upsilon^\infty(S^2,g)$
such that $P_n(\hat\Gamma)=P_n(\Gamma)$,
$\hat F=F$ on $P_n(\hat\Gamma)$ and
 $\rho(p,\hat F)$ is Diophantine for each $p\in P_n(\hat\Gamma)$.
Then Proposition \ref{herman} guarantees that each elliptic periodic point
in $P_n(\hat\Gamma)$ is stable.
Such a perturbation $\hat\Gamma$ can be made arbitrarily $C^r$-close to $\Gamma$.
Therefore, $\cD_n$ is $C^r$-dense in $\cW_n$.
\end{proof}

\subsection{Homoclinic intersections}
Now we study the hyperbolic periodic points in $P_n(\Gamma)$.
Although each point $x\in P_n(\Gamma)$  is fixed by $F^n$,
the two branches of the stable (and unstable) manifolds $x$ may
be switched by $F^n$.
However, $F^{2n}$ does fix each branch of the invariant manifolds of
hyperbolic periodic points in $P_n(\Gamma)$.
When studying $P_n(\Gamma)$,  we actually consider
the $2n$-th iteration $F^{2n}$ of  those $\Gamma\in\cD_{2n}$.
For simplicity we denote $f=F^{2n}$.

Let $L$ is a branch of the unstable manifold $W^u(p)\backslash\{p\}$.
Then for any $x\in L$, the segment $L[x,fx]$ can be viewed as
a fundamental domain of $L$ with respect to $f=F^{2n}$.
As $k\to +\infty$, $f^{-k}L[x,fx]$ converges to $p$,
while $f^{k}L[x,fx]$ may have various limiting behaviors.
Denote by $\omega(L)$ the limit set of $f^{k}L[x,fx]$
as $k\to+\infty$. Similarly we define the $\omega$-set\footnote{Technically,
one should say
the $\alpha$-set of a stable branch.
We use the same notation for stable and unstable branches
just to unify the presentation of this paper.} of stable branches
(with respect to $f^{-k}$).
There is a dichotomy for the branches of invariant manifolds
(see \cite{Oli1}):
\begin{itemize}
\item either $\omega(L)\supset L$, or  $\omega(L)\cap L=\emptyset$.
\end{itemize}
A stronger dichotomy was obtained in \cite{XZ}.
\begin{pro}
Let $f\in \mathrm{Diff}_\mu(M)$ such that
each fixed point is nondegenerate,
and each elliptic fixed point is stable.
Let $L$ be a branch of invariant manifolds of a hyperbolic fixed point $p$.
Assume $fL=L$.
Then
\begin{itemize}
\item either $\omega(L)\supset L$,

\item or  $\omega(L)=\{q\}$ is a singleton, where $q$ is a hyperbolic fixed point.
\end{itemize}
\end{pro}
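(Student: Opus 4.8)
The plan is to invoke the Oliveira dichotomy quoted above (see \cite{Oli1}) and to sharpen its second alternative. Replacing $f$ by $f^{-1}$ if necessary, I take $L$ to be a branch of the unstable manifold $W^{u}(p)$ with $fL=L$, and I work in the case $\omega(L)\cap L=\emptyset$, since $\omega(L)\supset L$ is already the first conclusion. Fixing $x\in L$ and $D=L[x,fx]$, one has $L=\bigcup_{k\in\bZ}f^{k}D$ and $\omega(L)=\bigcap_{N\ge 0}\overline{\bigcup_{k\ge N}f^{k}D}$. Because $f^{k}D$ and $f^{k+1}D$ share the endpoint $f^{k+1}x$, every forward tail is connected, so $\omega(L)$ is a nonempty, compact, connected, $f$-invariant set, disjoint from the open ray $L$; moreover $\overline{L}=L\cup\{p\}\cup\omega(L)$.

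I would first use the standing hypothesis to rule out elliptic fixed points inside $\omega(L)$. Suppose $q\in\omega(L)$ were elliptic; being stable, $q$ sits inside nested $f$-invariant closed disks $D_{m}\downarrow\{q\}$ with $f|_{\partial D_{m}}$ transitive. Since $q\neq p$ (as $p$ is hyperbolic), we have $p\notin D_{m}$ for large $m$, while $L$ meets the interior of $D_{m}$ because it accumulates on $q$; connectedness of the ray then gives a point $w\in L\cap\partial D_{m}$. As $L$ and $\partial D_{m}$ are both $f$-invariant, the forward orbit $A=\{f^{k}w:k\ge 0\}$ lies in $L\cap\partial D_{m}$ and, by transitivity, is dense in $\partial D_{m}$. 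But $f$ translates points of $L$ monotonically toward the forward end, so $A$ is discrete and closed in $L$; hence $\overline{A}\setminus A\subset\omega(L)$ while $A\cap\omega(L)=\emptyset$, and therefore $A=\partial D_{m}\setminus(\overline{A}\setminus A)$ is relatively open in the circle $\partial D_{m}$. A nonempty relatively open subset of a circle is uncountable, contradicting the countability of $A$. Thus every fixed point contained in $\omega(L)$ is hyperbolic.

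Next I would produce a fixed point in $\omega(L)$ and identify it with the whole limit set, and this is the step I expect to be the main obstacle. Let $U$ be the connected component of the complement of $\overline{L}$ whose boundary contains $\omega(L)$; since $fL=L$, the set $U$ is $f$-invariant. The standing assumptions of the proposition --- each fixed point nondegenerate and each elliptic fixed point stable --- are precisely the hypotheses under which the prime-end extension of $f|_{U}$ is governed by Mather's theory as developed by Franks and Le Calvez \cite{Mat1,FrLC}. The $f$-invariant accessible ray $L$ determines a prime end fixed by $f$, so the prime-end rotation number vanishes; the theory then forces the accessible part of $\partial U$ to reduce to a single fixed prime end, carried by a fixed point $q\in\omega(L)$ (the base point $p$ is the other end of the ray, and in the present case $p\notin\omega(L)$). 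The hard part is exactly to set up the domain $U$, to check that the invariant ray makes the rotation number zero, and to translate the resulting pinched prime-end picture into the statement that $\omega(L)$ is the single accessible fixed point.

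With this in hand the proof concludes quickly. By the second paragraph $q$ is hyperbolic, and the vanishing of the prime-end rotation number means that the forward end of $L$ cannot wind indefinitely but must converge to $q$ along $W^{s}(q)$, so that $\omega(L)=\{q\}$. As a sanity check on the obstruction, note that if instead the end of $L$ made infinitely many excursions through a linearizing neighborhood of $q$, entering near $W^{s}(q)$ and exiting near $W^{u}(q)$, then the inclination lemma would force $\omega(L)$ to contain a full branch of $W^{u}(q)$, pushing the analysis onto a new hyperbolic point; it is the planar prime-end structure, rather than any purely local argument, that prevents this cascade and pins $\omega(L)$ down to the single hyperbolic fixed point $q$. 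The case of a stable branch follows verbatim after replacing $f$ by $f^{-1}$.
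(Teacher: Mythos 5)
Your skeleton coincides with the paper's: in the non-recurrent case pass to a complementary domain of $K=\overline{L}$, take its prime-end compactification in the spirit of Mather and Franks--Le Calvez, use the stability hypothesis to rule out an elliptic limit point, and conclude a saddle connection. Your second paragraph, excluding elliptic points from $\omega(L)$, is correct and is in fact more detailed than the paper's one-line justification (``elliptic ones are stable and can't be approached by invariant curves outside $D_n$''); the only thing to record there is why transitivity of the circle homeomorphism $f|_{\partial D_m}$ forces \emph{every} forward orbit to be dense --- a transitive circle homeomorphism has irrational rotation number and cannot be of Denjoy type, hence is minimal.

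The genuine gap is exactly the step you flag as ``the main obstacle'': it is the entire content of the paper's proof, and what you assert the theory delivers is not what it delivers. First, the domain: the paper takes $U$ to be a component of $M\setminus K$ \emph{attached to $L$}, not merely one whose boundary contains $\omega(L)$ --- with your choice the prime point of $p$ need not lie on the relevant boundary circle at all. With the correct choice, one boundary circle $C_p$ of the prime-end compactification $\hat U$ contains the prime point $\hat p$ of $p$, and since $L$ is an unstable branch, $\hat p$ is an \emph{expanding} fixed point of the circle homeomorphism $\hat f|_{C_p}$. An orientation-preserving circle homeomorphism with an expanding fixed point has a second fixed point $\hat q$ (cut the circle at $\hat p$ and apply the intermediate value theorem); since the prime ends over accessible points of $L$ form arcs adjacent to $\hat p$ that are translated by $\hat f$, this $\hat q$ lies over $\omega(L)$, and its underlying principal point $q$ is a fixed point of $f$ --- this last passage uses the nondegeneracy hypothesis. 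Your countability argument then rules out $q$ elliptic, $L$ is a branch of $W^s(q)$, and $\omega(L)=\{q\}$. By contrast, your write-up never actually produces $q$: vanishing of the prime-end rotation number only guarantees \emph{some} fixed prime end, which a priori could be $\hat p$ alone, and it does not by itself ``pin down'' the accessible boundary to a single fixed prime end nor force the end of $L$ to converge. Two smaller glosses you share with the paper's sketch but should be aware of: nonemptiness and compactness of $\omega(L)$ need justification on the open annulus $M$ (the end of $L$ could a priori diverge), and the passage from a fixed prime end to a fixed \emph{point} (the principal set being a single point) is among the details deferred to \cite{XZ}.
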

The branch $L$ with $\omega(L)=\{q\}$ is called
a saddle connection.
A  saddle connection is said to be a homoclinic (heteroclinic, respectively) connection
if $q=p$ ($q\neq p$, respectively).
\begin{proof}
We sketch the main idea of the proof. See \cite{XZ} for details.
Let $L$ be a branch of the unstable manifold of $p$. Suppose
$\omega(L)\not\supset L$. Then $\omega(L)\cap L=\emptyset$.
Let $K$ be the closure of $L$,
and $U$ be a connected component of $M\backslash K$
attached to $L$. Let $\hat U$ be the prime-end compactification of $U$,
whose boundary consists of  finitely many circles.
One of the circles, say  $C_p$, contains
the prime point $\hat p$ of $p$.
The restriction of $\hat f$ on $C_p$ is a circle diffeomorphism,
and admits $\hat p$ as an expanding fixed point.
So there is at least one more point on $C_p$ fixed by $\hat f$, say $\hat q$.
Let $q$ be the underlining point of $\hat q$ on the closure $\overline{U}$,
which must be fixed by $f$.
Such a point cannot be elliptic, since elliptic ones are stable
and cannot be approached by invariant curves outside $D_n$.
Then $q$ must be a hyperbolic fixed point,
and $L$ forms a branch of the stable manifold of $q$.
Therefore, $\omega(L)=\{q\}$.
\end{proof}

As a corollary, we obtain the following result
due to Mather \cite{Mat81}. Our formulation
is slightly stronger.
See also \cite[Corollary 3.4]{XZ}.
\begin{pro}\label{mather}
Let $f\in \mathrm{Diff}_\mu(M)$ such that
each fixed point of $f$ is either hyperbolic,
or elliptic with Diophantine rotation number.
Let $p$ be hyperbolic fixed point such that all four branches
of $W^{s,u}_{\pm}(p)$ are fixed by $f$.
Then either one of the branch forms a saddle connection,
or all four branches have the same closure.
\end{pro}
\begin{proof}
Pick a local coordinate system $(U,(x,y))$ around $p$ such that
the branches leave $p$ along the two axes.
Suppose none of the four branches is a  saddle connection.
Then each branch is recurrent, and its $\omega$-set contains
the branch itself and least one of the branches adjacent to it.
If the $\omega$-set of a branch $L$ does not contain the other adjacent
branch, say $K$, then we have $K\cap \overline{L}=\emptyset$.
Consider the component $V$ of $M\backslash \overline{L}$
containing $K$. Then there exists an smaller open neighborhood
$W\subset U$ of $p$, such that $\pa V\cap W$ consists
of the two pieces of the invariant manifolds of $p$.
One of the two pieces is from $L$, the other piece must be from $K_-$
(the other branch of the invariant manifold containing $K$).
on the other hand, we have $\pa V\cap W\subset L$.
Therefore $K_-=L$ forms a homoclinic loop,
which contradicts the hypothesis we started with.
\end{proof}

\begin{pro}\label{homo2n}
Let $\Gamma\in \cD_{2n}$. Then for each hyperbolic periodic point $x\in P_n(\Gamma)$,
there exist transverse homoclinic intersections between
each branch of the stable manifold and each branch of the unstable manifold
of $x$.
\end{pro}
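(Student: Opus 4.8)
The plan is to pass to the iterate $f = F^{2n}$ and combine Corollary \ref{mather} with the transversality already guaranteed by $\cW_{2n}$. First I would record what $\Gamma \in \cD_{2n} \subset \cW_{2n}$ buys us: the fixed points of $f$ are exactly the points of $P_{2n}(\Gamma)$, and by construction each of them is either hyperbolic or elliptic with Diophantine rotation number (hence stable, by Proposition \ref{herman}); moreover any two branches $W^s_\alpha$, $W^u_\beta$ of invariant manifolds of hyperbolic fixed points are either disjoint or meet transversely. A hyperbolic $x \in P_n(\Gamma)$ is a hyperbolic fixed point of $f$, and the reason for iterating $2n$ times rather than $n$ is precisely that $f$ fixes each of the four branches $W^{s,u}_\pm(x)$ individually, so that Corollary \ref{mather} applies to $x$ verbatim.

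Corollary \ref{mather} then presents two alternatives: either some branch is a saddle connection, or all four branches share one closure. I would dispose of the first alternative using membership in $\cW_{2n}$. A saddle connection is a branch $L = W^u_\beta(x)$ with $\omega(L) = \{q\}$ for some hyperbolic fixed point $q$; then $L$ is simultaneously an arc of a stable branch $W^s_\gamma(q)$ (with $q = x$ in the homoclinic-loop case, $q \neq x$ in the heteroclinic case). Along the entire arc $L$ the tangent lines to $W^u_\beta(x)$ and to $W^s_\gamma(q)$ coincide, so $L$ is a nontransverse intersection of an unstable and a stable branch, contradicting the defining property of $\cW_{2n}$. Hence no saddle connection can occur, and we land in the second alternative: $\overline{W^u_\beta(x)} = \overline{W^s_\alpha(x)} =: K$ for every choice of signs $\alpha,\beta$.

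It remains to convert the coincidence of closures into genuine intersection points. Fix a pair $W^u_\beta(x)$, $W^s_\alpha(x)$. Since $W^u_\beta(x)$ is dense in $K$ and $W^s_{loc,\alpha}(x)\setminus\{x\} \subset K$, there is a point $q_0 \in W^s_{loc,\alpha}(x)$, $q_0 \neq x$, on which $W^u_\beta(x)$ accumulates; choose $z_k \in W^u_\beta(x)$ with $z_k \to q_0$. Applying the inclination ($\lambda$-) lemma to a fixed unstable arc through the $z_k$ and iterating forward, the forward orbit of $z_k$ shadows that of $q_0$ (which tends to $x$ along $W^s$), so an unstable arc is carried $C^1$-close to the local unstable manifold $W^u_{loc}(x)$ while remaining at a small but positive stable distance from $x$; such an arc must cross $W^s_{loc,\alpha}(x)$, producing a homoclinic point $y \in W^u_\beta(x) \cap W^s_\alpha(x)$ with $y \neq x$. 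By $\Gamma \in \cW_{2n}$ this intersection is transverse, and since $\alpha,\beta$ were arbitrary we obtain a transverse homoclinic point between each stable branch and each unstable branch of $x$; these are transverse homoclinic intersections for $F$ as well. The step I expect to be the main obstacle is this last one --- manufacturing an actual intersection out of the equality of closures via the $\lambda$-lemma, while keeping careful track of which stable branch is crossed --- whereas the exclusion of saddle connections is essentially immediate from the Kupka--Smale transversality packaged into $\cW_{2n}$.
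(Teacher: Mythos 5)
Your reduction to $f=F^{2n}$, the exclusion of saddle connections via membership in $\cW_{2n}$, and the appeal to Corollary \ref{mather} to get that all four branches share one closure all agree with the paper's proof. The genuine gap is in your final step, where you try to convert the closure equality $\overline{W^u_\beta(x)}=\overline{W^s_\alpha(x)}$ into an actual intersection by the inclination lemma. The $\lambda$-lemma requires the arc being iterated to intersect $W^s(x)$ \emph{transversally}; your arcs through $z_k$ are merely close to $q_0\in W^s_{loc,\alpha}(x)$, not on the stable manifold, and for such arcs the conclusion simply fails. In the linear model $(u,v)\mapsto (2u,v/2)$, a short vertical arc at $u=\epsilon>0$ iterates to ever-tinier vertical arcs drifting out along the unstable axis: they are never $C^1$-close to $W^u_{loc}$ and never cross the stable axis. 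Assuming instead that the unstable arc meets $W^s$ so that the lemma applies presupposes exactly the homoclinic point you are constructing. And even granting an arc $C^1$-close to $W^u_{loc}(x)$ at small stable distance from $x$, such an arc need not cross the particular branch $W^s_{loc,\alpha}(x)$: the accumulation of $W^u_\beta$ on $W^s_\alpha$ can a priori be one-sided and tangential (this is the entrance--exit issue the paper cites), so no purely local argument near $x$ can close the proof. Indeed, if closure equality plus the $\lambda$-lemma sufficed, Poincar\'e's connecting problem (P1) would follow on arbitrary surfaces, whereas it is known only under topological restrictions on the phase space.

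What the paper does at this point is global, and it is the ingredient your argument is missing. It closes up an initial segment $L[p,q]$ of the unstable branch at its first entry $q$ into the corner region $S_\epsilon=\{(u,v):0<u,v\le 1,\ uv\le\epsilon\}$ by the straight segment $\overline{qp}$, producing a simple closed curve $C$; since $\overline{K}\supset L$, the stable branch $K$ also enters $S_\epsilon$, giving a second simple closed curve $\hat C$. These two curves cross at $p$, and the closing segments $(p,q)$ and $(p,\hat q)$ are disjoint from the opposite branch. Because every pair of closed curves in the annulus $M=\Gamma\times(0,\pi)$ has algebraic intersection number zero, the crossing at $p$ forces a second crossing $y$, which must occur between $L(p,q)$ and $K(p,\hat q)$ --- a homoclinic point. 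One further small correction: once a (possibly tangential) intersection is found, $\cW_{2n}$ guarantees a transverse intersection at \emph{some} point of $W^s_\alpha(x)\cap W^u_\beta(x)$, not at the point just produced; your sentence ``this intersection is transverse'' should be weakened accordingly, as the paper is careful to do.
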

The proof mainly use the fact that the (algebraic)
intersection number between two simple closed curves on $M$
must be 0.
This kind of arguments also appeared in \cite{Rob73, Pix,Oli1,XZ}.
\begin{proof}
Let $\Gamma\in \cD_{2n}$, $F$ be the induced billiard map
on $M=\Gamma\times (0,\pi)$.
Note that there is no saddle connection between
any hyperbolic periodic points  in $P_{2n}(\Gamma)$
(by the definition of $\cW_{2n}$, since
$\cD_{2n}\subset \cW_{2n}$),
and each elliptic periodic point in $P_{2n}(\Gamma)$ is stable
(by Proposition \ref{dense}, since $\cD_{2n}\subset \Upsilon^\infty(S^2,g)$).

Let $p$ be a hyperbolic periodic  point in $P_n(\Gamma)$,
$L$ be a branch of the unstable manifold of $p$,
and $K$ be a branch of the stable manifold of $p$.
Then both $L$, $K$ are fixed by $F^{2n}$, are recurrent,
and they have the same closure (by Proposition \ref{mather}).
Pick a local coordinate system $(U,(x,y))$ around $p$ such that
$L$ leaves $p$ along the positive $x$-axis,
and $L$ approximates $p$ through the first quadrant.
Let $S_{\epsilon}=\{(x,y)\in U: 0< x,y\le 1, xy\le \epsilon\}$,
and $q$ be the first moment on $L$ that hits $S_{\epsilon}$.
Let $C$ be the closed curve that starts from $p$, first travels along $L$
to the point $q$, and then the segment $\overline{qp}$ from $q$ to $p$.
Then $C$ is a simple closed curve.

\begin{figure}[h]
\begin{overpic}[width=60mm]{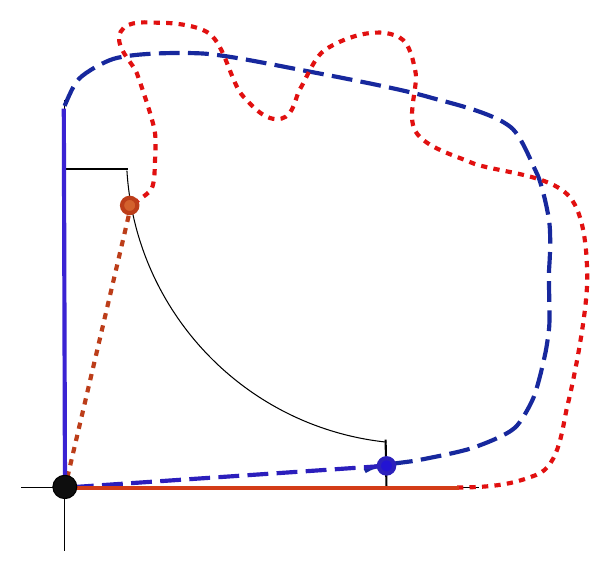}
\put(13,8){$p$}
\put(33,7){$L$}
\put(3,35){$K$}
\put(83,35){$\hat C$}
\put(98,35){$C$}
\put(20,25){$S_{\epsilon}$}
\put(25,60){$q$}
\put(65,21){$\hat q$}
\end{overpic}
\caption{The closing curves $C$ (red) and $\hat C$ (blue) when $K$ leaves along the positive
$y$-axis. The case that $K$ leaves along the negative $y$-axis is similar.}
\label{adjacent}
\end{figure}

Since the closure of $K$
contains $L$, $K$ also intersects $S_{\epsilon}$.
Let $\hat C$ be the corresponding simple closed curve by closing
the first intersection $\hat q$ of $K$ with $S_{\epsilon}$.
Then we see that $C$ and $\hat C$ cross each other at $p$,
and the two open segments $(p,q)$ and $(p,\hat q)$ do not
intersect (by the entrance--exit analysis, see \cite{Oli1,XZ}).
Clearly $L(p,q)\cap (p,\hat q)=\emptyset$
and $K(p,\hat q)\cap (p,q)=\emptyset$.

However, the algebraic intersection number between any two closed curves
on $M$ must be 0. So $C$ and $\hat C$ have to cross  each other at
some point beside $p$, say $y$, and that intersection must happen between
$L(p,q)$ and $K(p,\hat q)$.
Therefore, there is a homoclinic intersection between $K$ and $L$.
The intersection at $y$ is a topological crossing, but may not be transverse.
However, transverse homoclinic intersections do exist,
since $\cD_{2n}\subset \cW_{2n}$.
\end{proof}
Note that no perturbation is needed for the proof of
the above proposition.

\begin{proof}[Proof of Proposition  \ref{homo1n}]
As we discussed right after stating Proposition
\ref{homo1n}, $\cX_n$ is open in $\cW_n$.
Let $\cD_{2n}$ be the dense subset of $\cW_{2n}$ given by Lemma \ref{dense}.
Then Proposition \ref{homo2n} shows that $\cD_{2n}\subset \cX_n$.
Therefore, $\cX_n$ is open
and dense in $\Upsilon^r(S^2,g)$. This completes the proof.
\end{proof}

\begin{proof}[Proof of Theorem \ref{main}]
Let $\cR^r=\bigcap_{n\ge 1}\cX_n$. Then $\cR^r$ is a residual subset
of $\Upsilon^r(S^2,g)$. For each $f\in \cR$,
and each hyperbolic periodic point $p$,
its stable and unstable manifolds admit some transverse intersections.
This completes the proof.
\end{proof}
The case $r=\infty$ can be proved
in the same way as we did for Theorem \ref{denseinfty}.

\subsection{Positive topological entropy}

\begin{coro}
There is an open and dense subset $\cU\subset \Upsilon^r(S^2,g)$ such that
for each $\Gamma\in\cU$, the billiard map has transverse homoclinic
intersections and positive topological entropy.
\end{coro}
\begin{proof}
Let $\cD_2$ be the dense subset given in Lemma \ref{dense}. Let
$\Gamma\in\cD_2$. Then each point $x\in P_2(\Gamma)$ is non-degenerate. Let
$W(s_1,s_2)= S(s_0,s_1)+S(s_1,s_2)$ be the action along the $2$-periodic
configuration $(s_k)$ on $\Gamma$. Let $(s_k)$ be an $2$-periodic
configuration at where $W$ attains its minimum, and $x$ the corresponding
periodic point of period $2$. Then $D^2W(s_1,s_2)$ is positive definite,
and $\mathrm{Tr}(D_xF^2)>2$ (see Proposition \ref{hessian}).
So $x$ is hyperbolic.
Moreover,  each branch of the
invariant manifolds of $x$ is fixed by $F^2$, since both eigenvalues are positive
(the double period iterate $F^{2n}$ is {\it not} needed for minimizers).
Then the proof of Proposition \ref{homo2n} shows that there exist transverse
homoclinic intersections of the stable and unstable branches of $x$.
Transverse intersections are robust. So there exists an open set $\cU\supset
\cD_2$ such that each $\Gamma\in \cU$ has  transverse homoclinic
intersections and  positive topological entropy.
\end{proof}

\appendix

\section{Zero defect for generic convex billiards}

In this section, we give a proof of Proposition \ref{zero}.
Let $\Upsilon^r(S^2,g)\subset C^r(\bT,S^2)$ be the set of convex curves.
We will use $f:\bT\to S^2$
to emphasize the role of $f$ as an embedding function,
and use $\Gamma=f(\bT)$ only for its image.
Let $f:\bT\to S^2$ be
a simple closed curve
enclosing a strictly convex domain $Q$,
$p$ be a nonsymmetric periodic point of the billiard map $F$
with period $n=|\cO(p)|\ge 3$.
Let $F^kp=(s_k,\theta_k)$, and
$\{y_1,\dots, y_t\}\subset \Gamma$ be the set of reflections of
$\cO(p)$ on $\Gamma$.
Suppose $p$ has positive defect: $d(p)=n-t>0$,
and $(y_{w(1)},\dots,y_{w(n)})$
be the ordered reflection sequence of $\cO(p)$.
This gives rise to an onto map
$w:\{1,\dots, n\}\mapsto \{1,\dots, t\}$.
Such a map $w$ is said to be the pattern of the orbit $\cO(p)$.
Without loss of generality we assume
$w(1)=1$ and $\{1\le j\le n: w(j)=1\}=\{j_1,\dots,j_r\}$
(with $j_1=1$) for some $r\ge 2$.

Now let $\cO(p)$ be a symmetric periodic orbit of period $n$.
Then $n=2m$ is an even number, and there are exactly
two reflections of right angle with $\Gamma$.
Suppose $p$ has positive defect, and
$t$ be the number of distinct reflection points on $\Gamma$.
Let $w:\{1,\dots, m,m+1\}\to \{1,\dots, t\}$
be the pattern of $\cO(p)$
such that $y_{w(1)}$ and $y_{w(m+1)}$ are the two
reflection points on $\Gamma$ with right angle.
Note that $w(1)\neq w(m+1)$.
We first study the nonsymmetric case in details.
The symmetric case
need some minor modifications, and will be given at the end of the proof.

Now we generalize above notations to
any closed path of type $w$
on $S^2$. Let $t\ge 3$ be given.
Then a map $w:\bZ\to \{1,\dots, t\}$
is said to be of period $n$ if $w(k+n)=w(k)$ for all $k$;
is said to be admissible if $w(k)\neq w(k+1)$ for all $k$.
There are only finitely many admissible patterns of period $n$,
and we will fix such a pattern from now.
Let $\bT^{(t)}\subset \bT^t$
be the set of points $(s_1,\dots, s_t)$ with $s_i\neq s_j$
for all $i\neq j$.
Then for each $\bx\in \bT^{(t)}$ and $\by=f^{(t)}(\bx)$,
we have that $\{y_{w(k)}\}$ is a closed path  of type $w$.

Let $\by=(y_1,\dots, y_t)$ be a collection of $t$ distinct points
on $S^2$. Define the perimeter of the geodesic polygon
with the ordered corners at $\{y_{w(k)}\}$ as
$$H_w(\by)=\sum_{k=1}^n d(y_{w(k)},y_{w(k+1)}).$$
Similarly, given $f:\bT\to S^2$ and $\bx\in\bT^{(t)}$,
let $\ds H_w(f^{(t)}\bx)$ be the perimeter of the corresponding
geodesic polygon with corners $(f(s_i))$ and pattern $w$.

Let $J^1(\bT,S^2)$ be the 1-jet bundle, and $J^1_t(\bT,S^2)$
be the $t$-fold jet bundle. For each $f\in C^r(\bT,S^2)$, we have
a section map $j_tf:\bx\in\bT^{(t)}\mapsto(jf(s_1),\dots, jf(s_t))$.
Let $V_w$ be the set of those $\tau=(jf_1(s_1),\dots, jf_t(s_t))$
such that
\begin{enumerate}
\item $f_{j}(s_j)\neq f_i(s_i)$ for each $j\neq i$,

\item $f_i'(s_i)\neq 0$ for every $i=1,\dots,t$, and

\item  the polygon generated by
$(f_1(s_1),\dots, f_t(s_t))$  is convex with $t$ vertices.
\end{enumerate}

Let $\alpha:J^1(\bT,S^2)\to \bT$ be the source map,
and $\beta:J^1(\bT,S^2)\to S^2$ be the target map,
$W:=(\alpha^t)^{-1}(\bT^{(t)}) \;\cap\; V_w$.
Clearly $W$ is an open submanifold of $J^1_t(\bT,S^2)$.
Given $\tau\in W$, there are neighborhoods $U_i\subset \bT$ of
$s_i$ and $V_i\subset S^2$ of $f_i(U_i)$
with $U_i\cap U_j=\emptyset$ and $V_i\cap V_j=\emptyset$
whenever $1\le i< j\le t$,
 such that
\[\Omega:=W\cap \left(\prod_{i=1}^t J^1(U_i,V_i)\right)\]
is an open neighborhood of $\tau$.
Consider the coordinate map
\[\theta:\Omega\mapsto \prod_{i=1}^t U_i\times T_{V_i}S^2
\simeq \prod_{i=1}^s U_i\times V_i\times \bR^2,\]
with $\theta(\tau)=(\bu,\bv,A)$,
where $\bu=(u_1,\dots,u_t)$ is the source of $\tau$,
$\bv=(v_1,\dots,v_t)$ is the target of $\tau$,
and $\ds A=\left(f_{1}'(u_1), \dots, f_{i}'(u_i), \dots, f_{t}'(u_t)\right)
=\begin{pmatrix}
f_{1,1}'(u_1) & \dots & f_{i,1}'(u_i) & \dots & f_{t,1}'(u_t)\\
f_{1,2}'(u_1) & \dots & f_{i,1}'(u_i) & \dots & f_{t,2}'(u_t)
\end{pmatrix}$.

In the following we separate the role of $s_1$ from $s_k$, $2\le k\le t$.
For each $l=1,\dots, r$, let $a=w(j_l-1)$ and $b=w(j_l+1)$,
and $\eta_a$ be the tangent direction of the shortest geodesic
from $y_1$ to $y_a$, and similarly define $\eta_b$.
Let $\bt_{y_1}=f_1'(u_1)$ and $\bn_{y_1}$
be the unit tangent and normal directions
at $y_1$.
Then we decompose $\eta_a+\eta_b$ as
\[\eta_a+\eta_b=\xi_l(\by)\bt_{y_1}+\zeta_l(\by)\bn_{y_1}, \]
where $\xi_l(\by)=\langle\eta_a+\eta_b ,\bn_{y_1}\rangle$
and  $\zeta_l(\by)=\langle\eta_a+\eta_b ,\bt_{y_1}\rangle$.
Then it follows from the basic properties of billiard maps that
\begin{enumerate}
\item[(1a).] $\zeta_l(\by)=0$ if $\ds (y_{w(t)})_{t=1}^n$ is a periodic orbit;

\item[(1b).] $\zeta_l(\by)\neq 0$ if $(y_a,y_1,y_b)$ does not describe
a reflection.

\item[(2a).]  $\pa_{s_k}H\circ f^{(t)}(\bx)=0$ for orbit paths;

\item[(2b).] $\pa_{s_k}H\circ f^{(t)}(\bx)$ may not be zero for non-orbit paths.
\end{enumerate}

Let $\Sigma_{w}\subset M$ be those
$\tau=(jf_1(s_1),\dots, jf_t(s_t))\in M$
so that $\zeta_l(\by)=0$ for each $1\le l\le r$, and
$\pa_{s_k}F_w\circ (f_1,\dots,f_t)(\bx)=0$ for each $2\le k\le t$.
We first estimate the codimension of $\Sigma_w$.
Let $\tau\in \Sigma_w\subset M$ be given,
and $\theta:\tau\mapsto (\bu,\bv,A)$
be the coordinate system around $\tau$ given as above.
Define a function
\[\cK: \theta(\Omega)\to \bR^{r+t-1}, \quad
\chi\mapsto (\phi_1(\chi),\dots,\phi_r(\chi);
\psi_2(\chi),\dots, \psi_t(\chi)),\]
where
\begin{enumerate}
\item $\phi_l : \theta(\Omega)\to\bR$, $l=1,\dots,r$,
is defined by
\[\chi=(\bu,\bv,A)\mapsto \zeta_l(\by)
=\langle\eta_a+\eta_b ,\bt_{y_1}\rangle,\]
where $a=w(j_l-1)$, $b=w(j_l+1)$,
and $\bt_{y_1}$ is the unit tangent direction
along $f_1(u_1)$.

\item $\ds \psi_{k}:\theta(\Omega)\to\bR$,
$\chi\mapsto \langle\nabla_{y_k}H, \bt_{y_k}\rangle$,
for each $k=2,\dots,t$.
\end{enumerate}

Note that $\cK(\tau)=0$ for each $\tau\in \Sigma_w\cap \Omega$.
We claim that $\cK$ is a submersion at each point in $\Omega$.
The verification of the submersion is pretty simple for convex billiards:
by pushing the point $y_a$ along the normal direction of $f_a(s_a)$
(for $a=w(j_l-1)$, while fixing all other $y_k$, $k\neq a$),
we see that $\phi_l$ changes linearly (since $\bt_{y_1}$ is fixed);
by rotating the tangent direction $\bt_{y_k}$ of  $y_k$ along $f_k(s_k)$,
(while fixing all $y_k$),
we see that $\psi_k$ changes linearly
(since $\nabla_{y_k}F$ is a fixed nonzero vector);
and all these variations are independent.

Therefore, the map $\cK$ is a submersion at each point in $\Omega$.
So the codimension of $\Sigma_w$ in $\Omega\subset W$
is at least $\dim(\text{Im}(\cK))=r+t-1\ge t+1$, which is larger than
$\dim \bT^{(t)}=t$. Then by Multi-jet Transversality Theorem, we have that
$j_tf \cap \Sigma_{w}=\emptyset$ for a residual subset of convex tables.
Similarly, we define $\Sigma_{w'}$ for any
$n$-periodic admissible pattern $w':\bZ\to \{1,\dots,t\}$,
and then for any $t=2,\dots, n-1$.
This completes the proof for nonsymmetric periodic orbits.\\

For symmetric periodic orbits, the proof is almost the same.
The only difference is that when $a:=w(j_l-1)=w(j_l+1)$,
and the collision from $y_a$ to $y_1$ is at the right angle. In this case,
we still have that $\phi_l$ changes linearly
by pushing $y_a$  along the normal direction of $f_a(s_a)$
(since $\bt_{y_1}$ is fixed).
Then the rest of the proof is the same.
Putting together these results, we get that,
for a residual subset of convex tables,
each periodic orbits with period $n$ has zero defect.
This completes the proof that genericity of zero defect.\\

For the second part of Proposition \ref{zero}, we note that
in the proof given above, we used the property that
each folding of the path at $y_{w(k)}$ is a reflection;
but we did not use any property that $\{y_{w(k)}\}$
is on a single orbit. In particular, one can take the union
of the two periodic orbits and then study the paths with that joint pattern.
Therefore the same analysis applies to
the case that two orbits have some common reflection point.
Then we conclude that, there is a residual subset of convex tables,
for which any two periodic orbits with no common geodesic segment has no common
reflection point. However, note that the orbit obtained by the
time-reversal of one orbit has exact the same  geodesic segments,
and this does not count as positive defects.

\section*{Acknowledgments}
The author thanks Leonid Bunimovich, Victor Donnay,
Alex Grigo, Sa\v{s}a Koci\'c, Jeff Xia and Hong-Kun Zhang for
many valuable comments and useful discussions.
The author is very grateful to the anonymous referees for many useful
comments and suggestions,
which helped him to improve the presentation of the paper significantly.

\end{document}